\documentclass[11pt]{amsart}

\usepackage{amsmath, amssymb, amsfonts}
\usepackage{graphicx, overpic}
\usepackage{color}
\usepackage{enumerate}
\usepackage{multicol, verbatim}

\linespread{1.1}
\parskip.05in
\usepackage[margin=1.25in]{geometry}

\numberwithin{equation}{section}
\numberwithin{figure}{section}

\theoremstyle{plain}
\newtheorem{thm}{Theorem}[section]

\newtheorem*{Mainthm}{Main Theorem}
\newtheorem{lemma}[thm]{Lemma}
\newtheorem{prop}[thm]{Proposition}
\newtheorem{cor}[thm]{Corollary}
\theoremstyle{definition}
\newtheorem{defn}[thm]{Definition}

\newtheorem{remark}[thm]{Remark}

\newtheorem{notation}[thm]{Notation}

\newtheorem{construction}[thm]{Construction}

\newcommand{\Z}{\mathbb{Z}}
\newcommand{\R}{\mathbb{R}}
\newcommand{\N}{\mathbb{N}}

\renewcommand{\H}{\mathbb{H}}


\newcommand{\cB}{\mathcal{B}}

\newcommand{\cE}{\mathcal{E}}

\newcommand{\cL}{\mathcal{L}}

\newcommand{\cQ}{\mathcal{Q}}
\newcommand{\cR}{\mathcal{R}}


\newcommand{\G}{\Gamma}

\newcommand{\CAT}{\operatorname{CAT}}
\newcommand{\Aut}{\operatorname{Aut}}
\newcommand{\Out}{\operatorname{Out}}
\newcommand{\dom}{\operatorname{dom}}
\newcommand{\Int}{\operatorname{Int}}
\newcommand{\vol}{\operatorname{vol}}
\newcommand{\la}{\langle}
\newcommand{\ra}{\rangle}
\newcommand{\p}{\partial}

\newcommand{\doublebndry}{\partial^2F_n}

\newcommand{\hi}{\hat{\iota}}   
\newcommand{\pT}{\partial T}
\newcommand{\oT}{\overline{T}}
\newcommand{\hT}{\widehat{T}}

\newcommand{\out}{\textup{Out}(F_n)}
\newcommand{\aut}{\textup{Aut}(F_n)}
\newcommand{\Lam}{\Lambda}
\newcommand{\sig}{\sigma}
\newcommand{\from}{\colon}
\newcommand{\ga}{\alpha}
\newcommand{\wt}{\widetilde}
\newcommand{\pf}{\partial f_0}
\newcommand{\Wh}{\operatorname{Wh}}
\newcommand{\diam}{\operatorname{diam}}

\newcommand{\scarpet}{Sierpinski }

\definecolor{amethyst}{rgb}{0.6, 0.4, 0.8}

\newcommand{\hide}[1]{}

\title{The visual boundary of hyperbolic free-by-cyclic groups}
\author{Yael Algom-Kfir}
\author{Arnaud Hilion}
\author{Emily Stark}
\date{\today}
\thanks{The first and third authors were supported by ISF grant 1941/14. The second author was supported by the grant ANR-16-CE40-0006 (ANR Dagger). The third author was supported by the Azrieli Foundation and Zuckerman Foundation. }

\begin{document}
 
\maketitle

\begin{abstract} 
Let $\phi$ be an atoroidal outer automorphism of the free group $F_n$. We study the Gromov boundary of the hyperbolic group $G_{\phi} = F_n \rtimes_{\phi} \Z$. We explicitly describe a family of embeddings of the complete bipartite graph $K_{3,3}$ into $\partial G_\phi$. 
To do so, we define {\it the directional Whitehead graph} and prove that an indecomposable $F_n$-tree is {\it Levitt type} if and only if one of its directional Whitehead graphs contains more than one edge. 
As an application, we obtain a direct proof of Kapovich-Kleiner's theorem \cite{kapovichkleiner} that $\partial G_\phi$ is homeomorphic to the Menger curve if the automorphism is atoroidal and fully irreducible. 
\end{abstract}


\vskip.23in

\section{Introduction}

  The family of free-by-cyclic groups is an intriguing example of a collection of groups that is simply defined, yet has a rich geometric structure theory.
  Denote by $F_n = \la x_1, \ldots, x_n \ra$ the free group on $n \geq 2$ generators. Let $\aut$ be the group of automorphisms of $F_n$, and let $\out$ be the quotient of $\aut$ by the normal subgroup of inner automorphisms.
  Given an outer automorphism $\phi \in \out$, the group $G_{\phi}$ is defined by the HNN presentation 
 \begin{equation}\label{hnnpresentation}
 G_{\phi} := F_n \rtimes_{\phi} \Z = \la \, x_1, \ldots, x_n, t \,\, | \,\, t^{-1}x_it=\Phi(x_i), \, 1 \leq i \leq n \, \ra \end{equation}
 where $\Phi\in\aut$ is any automorphism of $F_n$ in the outer class $\phi$. (Different choices of $\Phi$ yield Tietze-equivalent presentations of $G_{\phi}$.)
  There is a satisfying correspondence between geometric properties of $G_\phi$ and algebraic properties of $\phi$. Brinkmann \cite{brinkmann00} proved that $G_\phi$ is hyperbolic precisely when $\phi$ is {\it atoroidal}, which means that no conjugacy class of $F_n$ is invariant under a power of $\phi$. 
  In the case that $G_\phi$ is Gromov hyperbolic, a theorem of Bowditch \cite[Theorem 6.2]{bowditch} implies that the visual boundary of $G_\phi$ contains a local cut point if and only if $G_\phi$ splits over a virtually infinite cyclic subgroup. Kapovich--Kleiner \cite[proof of Corollary 15]{kapovichkleiner} proved that in this case some power of $\phi$ preserves a free splitting of $F_n$. Thus, there is a trichotomy: $\partial G_\phi$ contains a local cut point if and only if $G_\phi$ splits over a virtually infinite cyclic subgroup if and only if a power of $\phi$ preserves a free splitting of $F_n$. 

  If $G_{\phi}$ is a hyperbolic free-by-cyclic group, then the cohomological dimension of $G_{\phi}$ is two, hence the boundary of $G_{\phi}$ is $1$-dimensional by work of Bestvina--Mess \cite{bestvinamess}. Kapovich--Kleiner \cite[Theorem 4]{kapovichkleiner} proved that if $G$ is a hyperbolic group with one-dimensional boundary, and $G$ does not split over a finite or virtually cyclic subgroup, then the boundary of $G$ is homeomorphic to either the unit circle $S^1$, the \scarpet carpet, or the Menger curve. A deep theorem of Tukia--Gabai--Casson--Jungreis \cite{tukia,gabai,cassonjungreis} classifies the hyperbolic groups with boundary homeomorphic to $S^1$ as precisely the groups that act discretely and cocompactly by isometries on the hyperbolic plane. Kapovich--Kleiner \cite[Theorem 5]{kapovichkleiner} characterize the structure of hyperbolic groups that have boundary homeomorphic to the \scarpet carpet, and it follows from their characterization that any such group has negative Euler characteristic (see Section~\ref{sec_boundary} of the present article). Consequently, a hyperbolic free-by-cyclic group cannot have boundary homeomorphic to $S^1$ or the \scarpet carpet, since a free-by-cyclic group has Euler characteristic equal to zero. Thus, the boundary of a hyperbolic free-by-cyclic group which does not split over a virtually cyclic group must be homeomorphic to the Menger curve.

  The starting point of this project was to explicitly find a non-planar set inside $\partial G_\phi$ when $G_{\phi}$ is hyperbolic. Kuratowski  \cite{kuratowski} proved that a graph is non-planar if and only if it admits an embedding of the complete bipartite graph $K_{3,3}$ or the complete graph $K_5$. 

  \begin{Mainthm}
  If $\phi$ is an atoroidal automorphism in $\out$, then $\partial G_\phi$ contains a copy of the complete bipartite graph $K_{3,3}$. 
  \end{Mainthm}

  In order to find this non-planar set,  we use the following facts and constructions. If $\phi$ is fully irreducible, then $\phi$ acts on the set of projective classes of very small minimal $F_n$-trees by north-south dynamics \cite{levittlustig}. Denote the attractor by $T_+$ and the repeller by $T_-$. Levitt and Lustig \cite{levittlustig} defined for any free $F_n$ tree $T$ with dense orbits a map $\cQ \from \partial F_n \to \widehat T$ where $\widehat T$ is the compactification of the metric completion of $T$. Of particular interest to us are the maps $\cQ_-$ and $\cQ_+$ associated to the trees $T_-$ and $T_+$. Mitra (Mj) \cite{mitra} proved there exists a continuous surjection $\hi \from \partial F_n \to \partial G_\phi$, that extends the injection $\iota \from F_n \to G_{\phi}$ in the presentation \eqref{hnnpresentation}. The map $\hi$ is called the {\it Cannon-Thurston map for the subgroup $F_n \leq G_\phi$}. In \cite{kapovichlustig}, Kapovich and Lustig proved that preimages of $\hi$ are finite and uniformly bounded. Moreover, they showed that $\hi$ factors through both $\cQ_+$ and $\cQ_-$. That is, there exist continuous surjections $\cR_\pm \from \widehat T_\pm \to \partial G_\phi$ so that $\hi = \cR_+ \circ \cQ_+ = \cR_- \circ \cQ_-$. The maps $\cR_\pm$ are both onto, and restricted to $T_\pm$ they are injective and their images are disjoint.  

  To prove the Main Theorem, we realize the embedded $K_{3,3}$ in $\p G_{\phi}$ as a union of a part that lies in the tree $T_+$ and a part that lies in the tree $T_-$. If $\phi$ admits the existence of these subsets in a particular way, then we say that $\phi$ {\it satisfies the $T_\pm$-pattern}; see Definition~\ref{def_Tpic}. Given a tree $T$ with dense orbits, we introduce the {\it directional Whitehead graph} (in Definition~\ref{def_dirWH}) to capture certain asymptotic relations between leaves of the lamination dual to $T$. We prove in Lemma~\ref{DWvsTLevitt} that for any indecomposable tree $T$, there is the following correspondence.
  
    \begin{center}
    \begin{tabular}{p{6 cm} c p{7cm}}
	A directional 
	Whitehead graph of~$T$ contains
	more than one edge.  & $\iff$ &
	$T$ has {\it Levitt type}; in particular, there exists a system of partial isometries associated to $T$ for which the Rips Machine never halts. See Definition~\ref{def_levitt_type}.
    \end{tabular} 
    \end{center}

    \noindent  In Lemma \ref{paternIffDw} we prove
    \begin{center}
    \begin{tabular}{p{6 cm} c p{6 cm}}
	A directional Whitehead graph of $T_-$ contains more than one edge & $\iff$ &
	$\phi$ satisfies the $T_\pm$-pattern.
    \end{tabular} 
    \end{center} 
   The $T_\pm$-pattern maps into $\partial G_\phi$ to form an embedded $K_{3,3}$, as shown in Proposition~\ref{patternImpliesK33}. In Section~\ref{sec_boundary}, we explain how the Main Theorem gives a direct proof of a result of Kapovich--Kleiner \cite[Section 6]{kapovichkleiner} which states that, if $\phi$ is fully irreducible, then $\partial G_\phi$ is homeomorphic to the Menger curve. 

  There is a striking and well-developed analogy between fully irreducible automorphisms of free groups and pseudo-Anosov homeomorphisms of surfaces. Let $\Sigma$ be a surface with boundary. A pseudo-Anosov homeomorphism of $\Sigma$ that fixes the boundary components of $\Sigma$ gives rise to a fully irreducible automorphism $\psi$ of $\pi_1(\Sigma)$, a non-abelian free group. In this case, the automorphism $\psi$ is not atoroidal. While the associated free-by-cyclic group $G_{\psi}$ is not a hyperbolic group, the group is $\CAT(0)$ with isolated flats. Hence, the group has a unique $\CAT(0)$ boundary by work of Hruska--Kleiner \cite{hruskakleiner}.  Ruane \cite{ruane} proved that the $\CAT(0)$ boundary of $G_{\psi}$ in this case is homeomorphic to the Sierpinski carpet; in particular, the $\CAT(0)$ boundary of $G_{\psi}$ is planar. A key distinction between these settings is the difference in the systems of partial isometries representing the attracting and repelling trees associated to the automorphisms. The process of iterating the Rips Machine on the system associated to the pseudo-Anosov will halt, producing an interval exchange transformation; while on a system associated to a fully irreducible atoroidal automorphism, the Rips Machine will  not halt, and will produce a band system with three overlapping bands (Proposition \ref{prop:levitt implies overlap}). Our methods illustrate how this difference in dynamics leads to the difference in the topology of the boundary: one is planar, while the other is not.
    
  The quasi-conformal homeomorphism type of the visual boundary of a hyperbolic group is a complete quasi-isometry invariant \cite{p96}. The quasi-isometry classification of hyperbolic free-by-cyclic groups is open. Our hope is that the new-found understanding of these structures in the boundary will allow us to understand more about the quasi-conformal structure on the boundary of these groups as well, but, this is beyond the scope of the current paper.
  
  In the process of proving our main theorem, we establish new results which may be of independent interest to specialists in the field. For instance, we further investigate systems of partial isometries on compact trees in continuation of the work of Coulbois, Hilion and Lustig  \cite{coulboishilionlustig,coulboishilion,coulboishilion_bot}. Precise definitions of the objects considered here can be a bit technical and will be given in Sections~\ref{sec:prelims}~and~\ref{subsec:rips_class}. In particular, combining our results with previous results of \cite{coulboishilion_bot}, we obtain the following characterization of the type (Levitt or surface) of a free mixing $F_n$-tree.

  \begin{thm}[Corollary~\ref{cor:type}]
  Let $T$ be a free mixing $F_n$-tree, and let $A$ be a basis of $F_n$.
  Let $S_0 = (K_0, A_0)$ be the associated system of partial isometries, and let $S_i= (K_i, A_i)$ denote the output after the $i^{th}$ iteration of the Rips Machine. Then
  \begin{itemize}
  \item $T$ is pseudo-surface if and only if $\vol\left(K_i^{\geq 3}\right)=0$ for some $i\in\N$;
  \item $T$ is Levitt type if and only if $\vol\left(K_i^{\geq 3}\right)>0$ for all $i\in\N$,
  \end{itemize}
  where $K_i^{\geq 3}$ is the subset of $K_i$ where at least 3 distinct partial isometries in $A_i$ are defined, and $\vol\left(K_i^{\geq 3}\right)$ denotes the volume of $K_i^{\geq 3}$.
  \end{thm}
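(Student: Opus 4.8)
The plan is to combine the directional Whitehead graph criterion of Lemma~\ref{DWvsTLevitt} with the Rips-machine analysis of free mixing trees carried out in \cite{coulboishilion_bot}. Since $T$ is free and mixing it is, in particular, indecomposable, so Lemma~\ref{DWvsTLevitt} applies; moreover, by the classification recalled in Section~\ref{subsec:rips_class} (following \cite{coulboishilion_bot}), a free mixing $F_n$-tree is of exactly one of the two types, pseudo-surface or Levitt. Thus the two bullet points are logically complementary, and it is enough to prove the equivalence
\[ T \text{ is pseudo-surface} \iff \vol\bigl(K_i^{\geq 3}\bigr)=0 \text{ for some } i\in\N ; \]
the Levitt bullet then follows by negation and the dichotomy, since $\vol(K_i^{\geq 3})>0$ for all $i$ means precisely that $T$ is not pseudo-surface.

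For the forward implication, I would argue from the definition of the type (Definition~\ref{def_levitt_type}): if $T$ is pseudo-surface then, for a suitable basis, the Rips Machine applied to the associated system of partial isometries halts and outputs an interval exchange transformation, and an interval exchange transformation has $\geq 3$-locus of volume zero. The remaining issue is the choice of basis: two bases yield different systems, both representing $T$, and I would invoke \cite{coulboishilion_bot} for the fact that the long-term behaviour of the Rips Machine --- in particular whether $\vol(K_i^{\geq 3})$ eventually vanishes --- depends only on $T$ and not on the presenting system. Hence $\vol(K_i^{\geq 3})=0$ for some $i$ also for the system $S_0$ built from the given basis $A$.

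For the reverse implication, suppose $\vol(K_{i_0}^{\geq 3})=0$ for some $i_0\in\N$. Because a Rips move only erodes the forest (one has $K_{i+1}\subseteq K_i$) and restricts the partial isometries, the coverage multiplicity of a point can only decrease along the sequence, so $\vol(K_i^{\geq 3})$ is nonincreasing and hence equals $0$ for every $i\geq i_0$. If $T$ were Levitt, then by Proposition~\ref{prop:levitt implies overlap} iterating the Rips Machine on $S_{i_0}$ --- which still represents $T$ --- would eventually produce a band system with three overlapping bands, and the region of the base covered by all three bands is a positive-volume subset of $K_j^{\geq 3}$ for some $j\geq i_0$, contradicting $\vol(K_j^{\geq 3})=0$. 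By the dichotomy $T$ is therefore pseudo-surface, which completes the equivalence.

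The step I expect to be the main obstacle is making the two appeals to \cite{coulboishilion_bot} fully precise: first, that the eventual vanishing of $\vol(K_i^{\geq 3})$ is independent of the basis used to construct $S_0$; and second --- implicit both in identifying an interval exchange transformation with the pseudo-surface conclusion and in the converse direction --- that the purely measure-theoretic condition ``$\vol(K_i^{\geq 3})=0$'' coincides with the geometric condition that $S_i$ be of surface type, so that no exotic Levitt-type behaviour can hide on a volume-zero subforest of $K_i$. Both should follow from the structure theory of \cite{coulboishilion_bot}, but they require care about the particular normalization of the Rips Machine in force and about the stabilization of the sequence $(S_i)_{i\in\N}$.
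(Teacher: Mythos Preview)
Your overall architecture---invoke the pseudo-surface/Levitt dichotomy for free mixing trees and then establish one implication for each type---is exactly what the paper does. But you have made the argument much harder than it needs to be, and introduced one tool that plays no role.

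First, Lemma~\ref{DWvsTLevitt} is a red herring here: you announce it in your plan but never use it, and as stated in the paper it concerns the repelling tree $T_-$ of a fully irreducible automorphism, not an arbitrary free mixing tree. Drop it.

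Second, and more substantively, the paper handles the pseudo-surface direction by a direct citation rather than by your detour through ``the Rips Machine halts, outputs an interval exchange, and an IET has $\geq 3$-locus of volume zero for a suitable basis''. The relevant input is \cite[Proposition~4.3]{coulboishilion}: if the system $S_0=(K_0,A_0)$ built from the compact heart is pseudo-surface, then already $\vol(K_0^{\geq 3})=0$. This holds for the given basis $A$ at step $i=0$, so your entire discussion of basis independence and of whether eventual vanishing transfers between presentations is unnecessary.

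Third, for the Levitt direction you apply Proposition~\ref{prop:levitt implies overlap} to $S_{i_0}$ and speak of ``eventually'' producing three overlapping bands. The proposition is stated and proved for the system $S_0$ coming from the compact heart and asserts $\vol(K_i^{\geq 3})>0$ for \emph{every} $i\in\N$; just apply it to $S_0$ and take $i=i_0$ to get the contradiction directly. No re-basing at $S_{i_0}$ is needed.

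With these two citations in hand (\cite[Proposition~4.3]{coulboishilion} and Proposition~\ref{prop:levitt implies overlap}), together with the dichotomy \cite[Proposition~5.14]{coulboishilion}, the corollary is immediate, and the two ``main obstacles'' you flag simply do not arise.
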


  Another noteworthy result relates two possible definitions of the {\em ideal Whitehead graph} of a fully irreducible outer automorphism $\phi\in\out$. Handel--Mosher \cite[Chapter~3]{handelMosher_axes} introduced the ideal Whitehead graph to study the asymptotic relations between singular leaves of the attracting lamination of $\phi$; see Section~\ref{subsec_lams}. They define the graph $\Wh_{\phi}$ as the $F_n$-quotient of the graph whose vertex set is the union of nonrepelling fixed points in $\p F_n$ of principal automorphisms representing $\phi$; see Definition~\ref{def_principal}. An edge of $\Wh_{\phi}$ corresponds to a leaf of the attracting lamination $\Lam_+^\phi$ of $\phi$. For the second definition, let the graph $\Wh_{\Lam_+^\phi}$ be the $F_n$-quotient of the graph whose vertex set is the union of endpoints of singular leaves of $\Lam_+^\phi$, where a singular leaf has an asymptotic class containing more than one element. An edge of $\Wh_{\Lam_+^\phi}$ corresponds to a singular leaf of $\Lam_+^\phi$. We prove the following in Section~\ref{sec_wh_graphs_iso}.
  
  \begin{thm}\label{thm:Whitehead graph}
  The ideal Whitehead graphs $\Wh_{\phi}$ and $\Wh_{\Lam_+^\phi}$ of a fully irreducible outer automorphism $\phi\in\out$ are isomorphic.
  \end{thm}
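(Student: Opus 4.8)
The plan is to realize both $\Wh_{\phi}$ and $\Wh_{\Lam_+^{\phi}}$ as the $F_n$-quotient of one and the same graph built from the repelling tree $T_-$ and read off through the Levitt--Lustig map $\cQ_-\from\p F_n\to\hT_-$ of \cite{levittlustig}. Since replacing $\phi$ by a rotationless power changes neither the attracting lamination nor either ideal Whitehead graph, we may assume $\phi$ is rotationless, so that the notion of a principal automorphism (Definition~\ref{def_principal}) is available. We use two inputs: (i) the attracting lamination is the dual lamination of the repelling tree, $\Lam_+^{\phi}=L(T_-)$, and $T_-$ is a free, arational $F_n$-tree with dense orbits \cite{coulboishilionlustig, coulboishilion_bot}; and (ii) for such a tree $L(T_-)=\{(\xi,\eta)\in\doublebndry : \cQ_-(\xi)=\cQ_-(\eta)\}$, so that the leaves of $L(T_-)$ with a prescribed endpoint $\xi\in\p F_n$ are exactly the pairs $(\xi,\eta)$ with $\cQ_-(\eta)=\cQ_-(\xi)$ \cite{coulboishilionlustig}.

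From (ii) one reads off the $\Wh_{\Lam_+^{\phi}}$ side (see Section~\ref{subsec_lams} for the terminology). Both endpoints of a leaf $(\xi,\eta)\in L(T_-)$ have the common image $P:=\cQ_-(\xi)=\cQ_-(\eta)$, and the asymptotic class of $(\xi,\eta)$ along the end $\xi$ consists of the leaves with endpoint $\xi$, namely the pairs $(\xi,\eta')$ with $\cQ_-(\eta')=P$; hence this class has more than one element if and only if $\left|\cQ_-^{-1}(P)\right|\geq 3$, independently of which end is used. Therefore the singular leaves of $\Lam_+^{\phi}$ are precisely the pairs drawn from a $\cQ_-$-fibre of cardinality at least three, and the endpoints of singular leaves are precisely the points of $\p F_n$ lying in such a fibre. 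Writing $\cX\subseteq\hT_-$ for the ($F_n$-invariant) set of points whose $\cQ_-$-fibre has at least three elements, we conclude that $\Wh_{\Lam_+^{\phi}}$ is the $F_n$-quotient of the graph with vertex set $\bigcup_{P\in\cX}\cQ_-^{-1}(P)$ in which $\{\xi,\eta\}$ is an edge exactly when $\cQ_-(\xi)=\cQ_-(\eta)\in\cX$.

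It remains to match the $\Wh_{\phi}$ side to the same graph. Since $T_-$ is $\phi$-invariant, each principal automorphism $\Phi$ representing $\phi$ determines a homothety $H_\Phi$ of $T_-$ satisfying $H_\Phi(g\cdot x)=\Phi(g)\cdot H_\Phi(x)$ and having stretch factor different from $1$; such a homothety fixes a unique point $\mathrm{cen}(\Phi)$ of the metric completion $\oT_-$. The map $\cQ_-$ intertwines the boundary map $\widehat\Phi$ with $H_\Phi$, hence carries every fixed point of $\widehat\Phi$ in $\p F_n$ to a fixed point of $H_\Phi$; since the nonrepelling fixed points are exactly those carried into $\oT_-$ rather than to $\p T_-$, this gives $\cQ_-(\mathrm{fix}_N(\Phi))=\{\mathrm{cen}(\Phi)\}$, and using that $\phi$ is rotationless (so that $\widehat\Phi$ fixes the finite set $\cQ_-^{-1}(\mathrm{cen}(\Phi))$ pointwise) one obtains $\mathrm{fix}_N(\Phi)=\cQ_-^{-1}(\mathrm{cen}(\Phi))$. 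Finally, by the comparison between the index of $\phi$ and the geometric index of $T_-$ established in \cite{coulboishilion_bot}, the assignment $\Phi\mapsto\mathrm{cen}(\Phi)$ is an $F_n$-equivariant bijection from the principal automorphisms representing $\phi$ onto $\cX$ (in the atoroidal case, a representative $\Phi$ of $\phi$ is principal exactly when $\left|\mathrm{fix}_N(\Phi)\right|\geq 3$). Consequently the vertex set $\bigcup_{\Phi}\mathrm{fix}_N(\Phi)$ of $\Wh_{\phi}$ coincides, $F_n$-equivariantly, with $\bigcup_{P\in\cX}\cQ_-^{-1}(P)$; and on either side a pair of vertices spans an edge exactly when it forms a leaf of $\Lam_+^{\phi}=L(T_-)$, i.e.\ exactly when the two points have a common $\cQ_-$-image in $\cX$. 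Passing to $F_n$-quotients gives $\Wh_{\phi}\cong\Wh_{\Lam_+^{\phi}}$.

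The main obstacle is the chain of identifications $\mathrm{fix}_N(\Phi)=\cQ_-^{-1}(\mathrm{cen}(\Phi))$ and $\Phi\leftrightarrow\mathrm{cen}(\Phi)$ in the last paragraph: here the equivariance of $\cQ_-$ under twisted automorphisms, the rotationless hypothesis (needed to exclude a nontrivial permutation of the fibre $\cQ_-^{-1}(\mathrm{cen}(\Phi))$ by $\widehat\Phi$), and the index dictionary of \cite{coulboishilion_bot} have to be combined carefully, and one must verify that the nonrepelling fixed points of $\Phi$ are exactly the $\cQ_-$-preimages of $\mathrm{cen}(\Phi)$ while the repelling ones map to $\p T_-$. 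An alternative route bypasses $T_-$: using a rotationless train track representative $f\from\Gamma\to\Gamma$ of $\phi$, one identifies the nonrepelling fixed points of a principal lift with the eigenrays at the corresponding principal vertex of $\Gamma$, and the stable Whitehead graph at that vertex with the graph recording which pairs of eigenrays lie in mutually asymptotic leaves; the difficulty then becomes proving that two eigenrays lie in asymptotic leaves precisely when the associated turn is taken by $\Lam_+^{\phi}$, which is again a local-structure comparison of the same kind.
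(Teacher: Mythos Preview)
Your proposal rests on the identity $\Lam_+^{\phi}=L(T_-)$, which is false. The correct relations, recorded in the paper as Lemma~\ref{diagLemma} and Lemma~\ref{lemma_bhf_limit}, are $L(T_-)=\operatorname{diag}(\Lam_+^{\phi})$ and $\Lam_+^{\phi}=L(T_-)'$; in particular $\Lam_+^{\phi}$ is a \emph{proper} sublamination of $L(T_-)$ as soon as some $\cQ_-$-fibre has three or more points, which is precisely the situation of interest here. This error infects your edge-set description: you assert that two vertices span an edge in either graph exactly when they lie in a common $\cQ_-$-fibre in $\cX$, but by the definitions in the introduction the edges of both $\Wh_{\phi}$ and $\Wh_{\Lam_+^{\phi}}$ correspond to leaves of $\Lam_+^{\phi}$, not to arbitrary pairs in a fibre (which are merely leaves of $L(T_-)$). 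Thus the ``third graph'' you construct---essentially the complete graph on each large $\cQ_-$-fibre---coincides with neither ideal Whitehead graph, and showing that both map to it does not establish the isomorphism. Your computation of the asymptotic class of a leaf is likewise a computation in $L(T_-)$, not in $\Lam_+^{\phi}$.

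The paper's argument is quite different and much shorter: it proves Lemma~\ref{asymptoticImpliesFixed} directly, showing (via finiteness of the $\cQ_-$-index and rotationless train-track theory) that the endpoints of any pair of asymptotic leaves of $\Lam_+^{\phi}$ are nonrepelling fixed points of a single principal automorphism representing a rotationless power of $\phi$; the isomorphism then follows immediately from the definitions. Your route through $\cQ_-$ and the homotheties of $T_-$ is salvageable---the vertex-set identification with $\bigcup_{P\in\cX}\cQ_-^{-1}(P)$ is in fact correct once one uses $L(T_-)=\operatorname{diag}(\Lam_+^{\phi})$ to argue connectivity of the $\Lam_+^{\phi}$-graph on each fibre---but several of the steps you flag as delicate (that $\operatorname{fix}_N(\Phi)=\cQ_-^{-1}(\operatorname{cen}(\Phi))$, that rotationless forces $\widehat\Phi$ to fix this fibre pointwise, and that $\Phi\mapsto\operatorname{cen}(\Phi)$ is a bijection onto $\cX$) would still require genuine proofs rather than citations of an ``index dictionary''.
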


 \subsection*{Acknowledgements} 
    We wish to thank the Mathematical Science and Research Institute for their hospitality in the fall of 2016.   

\section{Preliminaries} \label{sec:prelims}

  \subsection{Trees, directions, and the observers' topology} \label{subsec:trees}

     A metric space $(T,d)$ is an {\it $\R$-tree} if for any two points $x,y\in T$, there is a unique topological arc $p_{x,y}:[0,1]\rightarrow T$ connecting $x$ to $y$ and so that the image of $p_{x,y}$ is isometric to the segment $[0,d(x,y)]$. We denote the image of $p_{x,y}$ by $[x,y]$, and we refer to this arc as the {\it segment} in $T$ from $x$ to $y$. An arc is {\it non trivial} if it contains at least 2 distinct points.
     
     If $x \in T$ is a point, a {\it direction} at $x$ in $T$ is a component of $T \smallsetminus \{x\}$. A point $x \in T$ is a {\it branch point} if there are at least 3 directions at $x$; it is an {\it extremal point} if there is only one direction at $x$.
     Let $\Int(T)$ denote the {\it interior} of a tree $T$, i.e. the set of points of $T$ which are not extremal.
     Two arcs $[x,y]$, $[x,y']$ are {\it germ-equivalent} if $[x,y] \cap [x,y'] \neq\{x\}$; an equivalence class is a {\it germ} at $x$. The map that associates to the arc $[x,y]$ the direction containing $y$ induces a bijection from the set of germs at $x$ to the set of directions at $x$.
     
     A {\it ray} in $T$ is the image of an immersion $\R_+\rightarrow T$. Two rays $\rho$ and $\rho'$ are {\it asymptotic} if $\rho\cap\rho'$ has infinite diameter.
     Let $\p T$ denote the {\it Gromov boundary} of $T$; as a set, $\p T$ consists of asymptotic classes of rays in $T$.
     Let $\overline{T}$ denote the metric completion of $T$. The points in $\overline{T}\smallsetminus T$ are extremal in $\overline{T}$.
     Let $\hT = \overline{T} \cup \p T$.
     
     As explained by Coulbois--Hilion--Lustig \cite{coulboishilionlustig_ot}, the set $\hT$ may be equipped with the {\it observers' topology}, which makes the space homeomorphic to a dendrite.
     The observers' topology on $\hT$ is the topology generated (in the sense of a subbasis) by the set of directions in $\hT$, where the notion of direction in $T$ extends to $\hT$, and points of $\p T$ are extremal in $\hT$. This topology is weaker than the topology induced by the metric on an $\R$-tree. On any interval of $\overline{T}$, the metric topology and the observers' topology agree.

  \subsection{Systems of partial isometries and the band complex} \label{subsec:systems}

    \begin{defn} \label{def:partial_isom}
     A {\it compact forest} is a finite union of compact $\R$-trees. A {\it partial isometry} of a compact forest $K$ is an isometry $a:J \rightarrow J'$, where $J$ and $J'$ are compact subtrees of~$K$. The {\it domain} of $a$, denoted $\dom(a)$, is $J$, and the {\it range} of $a$ is $J'$. A partial isometry is {\it non-empty} if its domain is non-empty. A {\it system of partial isometries} is a pair $S = (K,A)$, where $K$ is a compact forest and $A$ is a finite collection of non-empty partial isometries of~$K$. 
    \end{defn}
 
    \begin{defn}
     Let $S = (K,A)$ be a system of partial isometries, and let $I = [0,1]$ denote the unit interval. For each $a_i \in A$, let $b_i \subset K$ be the domain of $a_i$. Let $B_i:=b_i \times I$ be called a {\it band}. The {\it band complex} $\cB$ is the quotient of $K \sqcup_i B_i$, where $b_i \times \{0\}$ is identified to the domain of $a_i$, and $b_i \times \{1\}$ is identified to the range of $a_i$ by isometries. 
    \end{defn}

   \begin{defn} \label{def_systems}
    Each band $B = b \times I \subset \cB$ is foliated by leaves of the form $x \times I$ for $x \in b$. The foliation of the bands yields a foliation of $\cB$.    
    A finite, infinite, or bi-infinite path $\gamma$ in $\cB$ is an {\it admissible leaf path} if $\gamma$ is a locally isometric, immersed path contained in a leaf.
    A {\it half-leaf based at $x \in K$} is an admissible leaf path $\rho:[0,\infty)\rightarrow \cB$ with $\rho(0)=x$. 
    A finite admissible path $\gamma$ travels through a finite sequence of bands $B_1, \dots, B_k$ and, in turn, corresponds to the sequence of partial isometries $a_1, \dots, a_k$ that give rise to the bands. The {\it domain} of $\gamma$ is then $\dom(a_k \circ \dots \circ a_1)$. Note that the domain contains $\gamma(0)$. This definition extends to admissible rays and bi-infinite lines. The {\it limit set} $\Omega$ of $S = (K,A)$ is the set of elements of $K$ which are in the domain of a bi-infinite admissible reduced path. The {\it lamination} $L(\cB)$ is the set of bi-infinite admissible leaf paths in $\cB$. For an $\R$-tree $J$, let $\mu_J$ denote the Lebesgue measure on $J$, which consists of the Lebesgue measures on the segments of $J$. The foliated space $\cB$ has a {\it transverse measure} associated to the Lebesgue measure on the components of $K$. 
     \end{defn}
       
       \begin{notation} \label{notation_leaf_dot}
        Let $x \in K$, and let $\ell = \ldots z_{-2}z_{-1}z_0z_1z_2 \ldots \subset L(\cB)$ be a bi-infinite admissible leaf path through $x$; so, $z_i \in A \cup A^{-1}$ for $i \in \Z$. Suppose $\ell_1 = z_0z_1z_2 \ldots$ and $\ell_2 = z_{-1}^{-1}z_{-2}^{-1} \dots$ denote the half-leaves based at $x$. Then, $\ell = \ell_1 \cup \ell_2$, and to record the point $x$ in the leaf $\ell$, we use the notation $\ell = \ldots z_{-2}z_{-1}.z_0z_1z_2 \ldots$.
       \end{notation}

   \begin{defn}
    Let $S = (K,A)$ be a system of partial isometries, and let $\cB$ be the associated band complex. Let $\widetilde{\cB}$ denote the universal cover of $\cB$. The foliation and transverse measure on $\cB$ lift to a foliation and transverse measure on $\widetilde{\cB}$. Collapsing each leaf in $\widetilde{\cB}$ to a point yields an $\R$-tree $T_{\cB}$, called the {\it dual tree} to $\cB$. 
   \end{defn}

  \subsection{The $\cQ$-map and dual lamination} \label{subsec:Q_laminations}
  
      \begin{defn}
      An {\it $F_n$-tree} is an $\R$-tree $T$ together with a homomorphism $\rho \from F_n \to \text{Isom}(T)$; the homomorphism is often repressed. 
      The action of $F_n$ on $T$ is {\it minimal} if $F_n$ does not leave invariant any non-trivial subtree of $T$. The action is {\it very small} if (1) all edge stabilizers are cyclic (i.e. $\{1\}$ or $\Z$); (2) for every non-trivial $g \in F_n$, the fixed subtree $Fix(g)$ is isometric to a subset of $\R$; and (3) $Fix(g)$ is equal to $Fix(g^p)$ for all $p \geq 2$. 
      \end{defn}

    \begin{thm} \cite{levittlustig} \cite[Proposition 2.3]{coulboishilionlustig_ot} \label{thm:Q_map}
      Let $T$ be a  minimal, very small, $F_n$-tree with dense orbits. There exists an $F_n$-equivariant surjective map \[ \cQ: \p F_n \rightarrow \hT\] which is continuous with respect to the observers' topology. In addition, points in $\pT$ have exactly one pre-image by $\cQ$.
       \qed
    \end{thm}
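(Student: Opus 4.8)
This is a classical result, due to Levitt--Lustig and given its topological form by Coulbois--Hilion--Lustig; I will sketch the construction. Fix a point $p\in T$ and a free basis of $F_n$, and let $\G$ be the corresponding Cayley graph of $F_n$, a simplicial tree on which $F_n$ acts freely and cocompactly. Sending the vertex $g$ to $g\cdot p$ and extending $F_n$-equivariantly and affinely over each edge produces a morphism $f\from\G\to T$. The one nontrivial input about $f$ is the \emph{bounded backtracking} property: there is a constant $C=C(p,T)\ge 0$ such that for every geodesic segment $[u,v]$ of $\G$ one has $f([u,v])\subseteq N_C([f(u),f(v)])$, where $N_C$ denotes the closed $C$-neighborhood in the metric of $T$; equivalently, whenever $w$ lies between $u$ and $v$ in $\G$, the point $f(w)$ is within distance $C$ of $[f(u),f(v)]$. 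This is the bounded cancellation lemma for morphisms between $F_n$-trees.

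For $X\in\p F_n$, let $g_0=1,g_1,g_2,\dots$ be the prefixes of the reduced infinite word representing $X$, so $g_k\to X$ along the ray $[1,X)\subseteq\G$. The technical heart is to show that the sequence $\bigl(f(g_k)\bigr)=(g_k\cdot p)$ converges in the observers' topology on $\hT$; one then defines $\cQ(X)$ to be this limit. Since $\hT$ with the observers' topology is a compact dendrite, the sequence has accumulation points, so it suffices to see that its accumulation set is a single point. This is exactly where bounded backtracking enters: for each $k$ the ray $f([g_k,X))$ lies in the $C$-neighborhood of the forward span $Y_k:=\bigcup_{m\ge k}[f(g_k),f(g_m)]\subseteq T$, and one shows --- using $F_n$-equivariance together with the hypothesis that all $F_n$-orbits are dense in $T$ --- that the nested observers'-closures $\overline{N_C(Y_k)}$ shrink to a point of $\hT$; in particular $f$ cannot cross a fixed branch point of $T$ infinitely often, since each such crossing is carried out by an $F_n$-translate from a fixed finite family of segments. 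Granting convergence, $F_n$-equivariance of $\cQ$ is inherited from $f$; continuity follows because the cylinder neighborhood of $X$ in $\p F_n$ (infinite words with prefix $g_k$) is carried by $\cQ$ into $\overline{N_C(Y_k)}$, and these sets form a neighborhood basis of $\cQ(X)$ in the dendrite $\hT$. Surjectivity is then automatic: $\cQ(\p F_n)$ is compact, hence closed, and $F_n$-invariant, so once it meets $\oT$ it contains the dense orbit of one of its points, hence all of $\hT$ --- and that it meets $\oT$ is part of the Levitt--Lustig analysis.

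It remains to show that every $Z\in\pT$ has exactly one preimage. Suppose $\cQ(X)=\cQ(X')=Z$ with $X\ne X'$, and let $\ell\from\R\to\G$ parametrize the bi-infinite geodesic of $\G$ from $X$ to $X'$, with $\ell(-t)\to X$ and $\ell(t)\to X'$ as $t\to\infty$; the rays $t\mapsto\ell(-t)$ and $[1,X)$ share a common tail, so $\lim_{t\to\infty}f(\ell(-t))=\cQ(X)=Z$, and likewise $\lim_{t\to\infty}f(\ell(t))=Z$. Set $y_0:=f(\ell(0))\in T$, and let $\rho\from[0,\infty)\to T$ be the geodesic ray from $y_0$ to $Z$. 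For each integer $m$, the component $d_m$ of $\hT\smallsetminus\{\rho(m)\}$ containing $Z$ is an observers' neighborhood of $Z$, and since every geodesic of $T$ from $y_0$ into $d_m$ passes through $\rho(m)$, every point of $d_m\cap\oT$ lies at distance $\ge m$ from $y_0$. Choose $T_m$ so that $f(\ell(\pm t))\in d_m$ for all $t\ge T_m$; as $d_m\cap\oT$ is convex, $[f(\ell(-t)),f(\ell(t))]\subseteq d_m\cap\oT$. But $\ell(0)$ lies between $\ell(-t)$ and $\ell(t)$ in $\G$, so bounded backtracking gives $y_0=f(\ell(0))\in N_C\bigl([f(\ell(-t)),f(\ell(t))]\bigr)$, forcing $y_0$ to lie within $C$ of a set all of whose points are at distance $\ge m$ from $y_0$. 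Taking $m>C$ gives a contradiction, so $X=X'$.

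The main obstacle is the convergence claim of the second paragraph: the metric behavior of $(g_k\cdot p)$ can be genuinely wild --- the sequence need not be Cauchy, or even bounded --- and making precise how bounded backtracking, density of orbits, and the coarseness of the observers' topology conspire to produce a limit requires the dendrite description of $\hT$ and careful bookkeeping of how branch points of $T$ are crossed, as carried out in \cite{coulboishilionlustig_ot}.
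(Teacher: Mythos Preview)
The paper does not prove this theorem: it is stated with citations to \cite{levittlustig} and \cite{coulboishilionlustig_ot} and closed with a \texttt{\textbackslash qed}, so there is no in-paper argument to compare against. Your sketch is a faithful outline of the Levitt--Lustig construction as recast in the observers' topology by Coulbois--Hilion--Lustig, and the uniqueness-on-$\partial T$ argument via bounded backtracking is correct.
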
   
    
  The $\cQ$-map given in the previous theorem may be used to define a lamination of $F_n$ as follows. 
  
    \begin{defn}
     The {\it double boundary} of $F_n$ is $\p^2F_n := (\p F_n \times \p F_n) \smallsetminus \Delta$, where $\Delta$ is the diagonal. Let $i:\p^2F_n \rightarrow \p^2F_n$ denote the involution that exchanges the factors. 
     The double boundary $\p^2F_n$ is endowed with the topology induced by the product topology, and $F_n$ acts diagonally on $\p^2F_n$.
     A {\it lamination of $F_n$} is a non-empty, closed, $F_n$-invariant, $i$-invariant subset of $\p^2F_n$. 
    \end{defn}

    \begin{defn} \label{def:lam}
      Let $F_n$ act by isometries on an $\R$-tree~$T$ so that the action is minimal, very small, and has dense orbits.
      Let $\cQ:\p F_n \rightarrow \hT$ be the $\cQ$-map given in Theorem \ref{thm:Q_map}. The {\it dual lamination of $T$} is 
      \[ L(T) = \{(X,Y) \in \p^2 F_n \,| \, \cQ(X) = \cQ(Y)\} .\] A {\it leaf} of the dual lamination $L(T)$ is a pair $(X,Y) \in L(T)$.         
    \end{defn}
 
 \subsection{Attracting and repelling trees and laminations of an outer automorphism} \label{subsec_lams}

    \begin{defn} (Outer space.)
      For $n \geq 2$, {\it Culler--Vogtmann outer space}, denoted $CV_n$, is the projectivized space of minimal, free, discrete actions by isometries of the free group $F_n$ on $\R$-trees. 
      Its topology is induced by embedding $CV_n$ into the space of length functions \cite{cullermorgan}.
      Let $\overline{CV}_n$ denote the compactification of $CV_n$, which is the set of projective classes of minimal, very small, $F_n$-trees \cite{cullermorgan,CohenLustig, OuterLimits}. Let 
       $\p CV_n = \overline {CV_n} \smallsetminus CV_n$. 
      These spaces admit an action of $\out$; an element $\phi \in \out$ sends an $F_n$-tree $(T,\rho)$ to $(T, \rho \circ \Phi)$, where $\Phi \in \Aut(F_n)$ is in the class $\phi$.  
      (For background, see \cite{cullervogtmann}, \cite{vogtmann}.)
    \end{defn}

    \begin{defn}
      An outer automorphism $\phi\in\out$ is {\it fully irreducible} (or, {\it iwip}) if no conjugacy class of a proper free factor of $F_n$ is fixed by a positive power of $\phi$.
    \end{defn}

   \begin{thm} \cite{levittlustig}
      If $\phi \in \Out(F_n)$ is fully irreducible, then $\phi$ acts on $\overline{CV}_n$ with North-South dynamics and projectively fixes two trees $T_+, T_- \in \p CV_n$. 
       \qed
    \end{thm}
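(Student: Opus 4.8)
The plan is to follow the strategy of Levitt--Lustig, which adapts Thurston's classical argument for pseudo-Anosov homeomorphisms on Teichm\"uller space to the setting of Outer space. The first step is to produce the two fixed points. Since $\phi$ is fully irreducible, the conjugacy class of no proper free factor is fixed by any power of $\phi$; in particular no point of $CV_n$ is fixed, and $\phi$ has infinite order. One applies $\phi$ to any $T \in CV_n$: by compactness of $\overline{CV}_n$, the sequence $\phi^k T$ has accumulation points in $\overline{CV}_n$. Using train track representatives $f \from \G \to \G$ of $\phi$ (and of $\phi^{-1}$) together with the Perron--Frobenius eigenvalue $\lambda > 1$ of the transition matrix, one shows that the accumulation points of $\phi^k T$ all project to a single point $T_+$, obtained by the standard construction of the stable tree: take the universal cover $\wt\G$ with the metric rescaled so that each edge has length equal to the corresponding Perron--Frobenius eigenvector entry, and push forward the metric under $\wt f$, then pass to the limit. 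Full irreducibility guarantees this limit tree is minimal, very small, has dense orbits, is not in $CV_n$, and is projectively $\phi$-fixed with dilatation $\lambda$ (i.e. $\phi T_+ = \lambda^{-1} T_+$ as length functions, up to the chosen normalization). Applying the same construction to $\phi^{-1}$ gives the repelling tree $T_-$.

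The second step is to establish the dynamical statement: for every $T \in \overline{CV}_n \smallsetminus \{[T_-]\}$ one has $\phi^k [T] \to [T_+]$, and symmetrically $\phi^{-k}[T] \to [T_-]$ for $[T] \neq [T_+]$, with convergence uniform on compact subsets of the complement. The key tool is the behavior of length functions under the train track map. For a conjugacy class $w$, write $\ell_T(w)$ for its translation length in $T$. The train track property implies that for any $T \in CV_n$ and any $w$, the ratio $\ell_{\phi^k T}(w)/\lambda^k$ converges to $\ell_{T_+}(w)$, and moreover this convergence can be controlled uniformly because cancellation in the iterates $f^k(w)$ is bounded by the bounded cancellation lemma. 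To upgrade this from $CV_n$ to all of $\overline{CV}_n \smallsetminus \{[T_-]\}$, one needs to know that $[T_-]$ is the unique tree whose forward iterates do not converge to $[T_+]$; this uses the fact that a very small tree $T$ with $[T] \neq [T_-]$ must contain a conjugacy class (or more precisely a legal leaf segment / current) that is ``transverse'' to the repelling lamination, so that its $f$-length still grows like $\lambda^k$, pinning down the limit. The formal mechanism here is to pass to geodesic currents or to use the Levitt--Lustig criterion distinguishing $T_-$ via its dual lamination $L(T_-)$ being carried by $T$.

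The third step is to promote pointwise convergence of length functions to convergence in $\overline{CV}_n$ and to deduce uniformity. Since the topology on $\overline{CV}_n$ is induced by the embedding into projectivized length function space $\mathbb{P}\R^{\mathcal{C}}$ (where $\mathcal{C}$ is the set of conjugacy classes), and since $\overline{CV}_n$ is compact, pointwise convergence of the normalized length functions together with compactness yields convergence in $\overline{CV}_n$; a standard argument (any subsequential limit must agree with $[T_+]$ on every $w$, hence equals $[T_+]$) gives the full-sequence statement, and a further compactness argument on the complement of a neighborhood of $[T_-]$ gives uniformity, which is precisely the content of North--South dynamics.

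\emph{The main obstacle.} The delicate point is Step 2: showing that $[T_-]$ is genuinely the \emph{only} exceptional point, i.e. that forward iteration attracts \emph{everything else} to $[T_+]$. It is easy to see attraction for trees in $CV_n$ (where all conjugacy classes have positive length and legality is eventually achieved), but a general very small tree in $\p CV_n$ can degenerate in subtle ways, and one must rule out that such a tree is ``partially repelling.'' This is exactly where full irreducibility is used in an essential way rather than cosmetically: it forces the attracting and repelling laminations to fill, so that no very small tree other than $[T_-]$ can be disjoint from (fail to cross) the attracting lamination. Making this precise requires the theory of the dual lamination $L(T)$ and the Q-map (Theorem~\ref{thm:Q_map}), together with an analysis of which laminations can be carried by a given very small tree. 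I would isolate this as the crux lemma and devote the bulk of the argument to it, treating the surrounding limiting and compactness arguments as routine.
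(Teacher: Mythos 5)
The paper does not actually prove this statement: it is quoted from Levitt--Lustig with a citation, so there is no internal argument to compare yours against. Judged on its own, your proposal correctly reconstructs the broad shape of the known proof (stable tree built from a train track representative and its Perron--Frobenius eigenvalue $\lambda$, the analogous construction for $\phi^{-1}$, analysis of length functions, compactness of $\overline{CV}_n$), but it is a plan rather than a proof, and the two places where you defer are exactly where the theorem lives. Your Step 2 --- that every $[T]\neq[T_-]$ in $\overline{CV}_n$ is attracted to $[T_+]$ --- is named as ``the crux lemma'' and not carried out. Proving it for an arbitrary very small tree in $\p CV_n$ (possibly with non-dense orbits, non-trivial point stabilizers, or no hyperbolic element of small translation length) is the bulk of Levitt--Lustig's paper; the phrase ``contains a conjugacy class transverse to the repelling lamination'' is not available verbatim there, since such a tree need not have any natural conjugacy class attached to it, and one must instead work with laminations or currents and establish the continuity/positivity properties of the relevant pairing, which is known to be delicate on $\overline{CV}_n$. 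Simply invoking the $\cQ$-map and ``laminations carried by $T$'' does not yet produce the dichotomy you need.

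There is a second, smaller gap in Step 3: for a homeomorphism of a compact space, pointwise convergence $\phi^k[T]\to[T_+]$ for all $[T]\neq[T_-]$ does \emph{not} in general imply convergence uniform on compact subsets of the complement of a neighborhood of $[T_-]$, and that uniformity is precisely what ``north--south dynamics'' asserts. So it cannot be dismissed as ``a further compactness argument''; it requires its own quantitative estimate (in Levitt--Lustig this comes from uniform control of normalized length functions, e.g.\ via bounded back-tracking constants and legality thresholds, on a neighborhood basis). In short: the outline points at the correct literature strategy, but as written the decisive attraction step and the uniformity step are asserted rather than proved.
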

  
   \begin{defn}
    We refer to the tree $T_+=T^{\phi}_+$ as the {\it attracting tree of $\phi$} and to the tree $T_- = T^{\phi}_-$ as the {\it repelling tree of $\phi$}. We omit the $\phi$-notation when the outer automorphism is clear from context.  Notice that $T^{\phi}_+ = T^{\phi^{-1}}_-$ and $T^\phi_- =T^{\phi^{-1}}_+$. 
   \end{defn}   
   
   If $\phi$ is a fully irreducible outer automorphism, then the group $F_n$ acts on the trees $T_+$ and $T_-$ with dense orbits. In fact, these actions exhibit a much stronger dynamical property. 
  
    \begin{defn}
     The action of $F_n$ on an $\R$-tree $T$ is {\it  mixing} if for any non-degenerate segments $I$ and $J$ in $T$ the segment $I$ is covered by finitely many translates of $J$: there exist finitely many elements $u_1, \ldots, u_k \in F_n$ so that $I \subset u_1J\cup u_2J \cup \ldots \cup u_kJ$. The action is {\it indecomposable} if, in addition, the elements $u_1, \ldots, u_k \in F_n$ may be chosen so that $u_iJ \cap u_{i+1}J$ is a non-degenerate segment for any $i = 1, \ldots, k-1$.
    \end{defn}

    \begin{lemma} \cite[Theorem 2.1]{coulboishilion_bot} \label{lemma_iwipindecomposable} 
         The actions of $F_n$ on the attracting and repelling trees of a fully irreducible outer automorphism of $F_n$ are indecomposable.
          \qed
    \end{lemma}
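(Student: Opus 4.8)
The plan is to reduce the claim to the attracting tree and then argue by contradiction: if the action on that tree failed to be indecomposable, it would decompose along a splitting of $F_n$ that a power of $\phi$ would have to preserve, which is impossible for a fully irreducible automorphism. Since $T^{\phi}_- = T^{\phi^{-1}}_+$ and $\phi^{-1}$ is fully irreducible exactly when $\phi$ is, it suffices to prove that the action of $F_n$ on $T := T^{\phi}_+$ is indecomposable. Recall that this action is minimal, very small, and has dense orbits, and that, since $\phi$ projectively fixes $T^{\phi}_+$ under its North--South action on $\overline{CV}_n$, there is a homothety $H \colon T \to T$ and a representative $\Phi \in \Aut(F_n)$ of $\phi$ with $H(g \cdot x) = \Phi(g) \cdot H(x)$ for all $g \in F_n$ and $x \in T$.

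Next I would invoke the decomposition dichotomy for $\R$-trees with dense orbits (Guirardel; see also Reynolds): either the $F_n$-action on $T$ is indecomposable, or $T$ admits a nontrivial transverse covering $\mathcal{Y}$ --- a family of nondegenerate closed subtrees covering $T$, any two of which meet in at most one point and of which each segment of $T$ meets only finitely many --- and one may take $\mathcal{Y}$ to be canonical, namely the finest such covering, equivalently the decomposition of $T$ into its indecomposable and simplicial pieces. Suppose, for contradiction, that $T$ is not indecomposable, and fix this canonical $\mathcal{Y}$. Since $\mathcal{Y}$ is canonical it is preserved by every isometry of $T$ that is equivariant with respect to an automorphism of $F_n$, so $H$ permutes the pieces of $\mathcal{Y}$ and respects their gluing. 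The skeleton $S$ of $\mathcal{Y}$ is then a simplicial $F_n$-tree --- its vertices are the pieces of $\mathcal{Y}$ together with the points of $T$ lying in more than one piece --- and it has an edge because $\mathcal{Y}$ is nontrivial. As $T$ is minimal, no vertex or edge of $S$ is fixed by all of $F_n$, so $S$ is a nontrivial $F_n$-tree and $F_n$ splits nontrivially as the finite graph of groups $S/F_n$. Because $H$ carries the covering to itself, it descends to a simplicial automorphism of $S$ equivariant with respect to $\Phi$; replacing $\phi$ by a positive power, we may therefore assume that $\phi$ fixes the conjugacy class of every vertex group and every edge group of this splitting.

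I would then split into two cases. If every edge group of $S/F_n$ is trivial, the splitting is a nontrivial free splitting of $F_n$ preserved by a power of $\phi$; its vertex groups are free factors and at least one of them is proper, so a proper free factor of $F_n$ has $\phi$-periodic conjugacy class, contradicting full irreducibility. If some edge group is nontrivial, then some nontrivial element of $F_n$ has $\phi$-periodic conjugacy class, so $\phi$ is not atoroidal; by the structure theory of geometric fully irreducible automorphisms, $\phi$ is then induced by a pseudo-Anosov homeomorphism of a compact surface $\Sigma$ with a single boundary component, and $T$ is $F_n$-equivariantly isometric to the $\R$-tree dual to the attracting measured foliation $\mathcal{F}$ of that pseudo-Anosov. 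But $\mathcal{F}$ is minimal and filling, and the dual tree of a minimal filling measured foliation on a surface is indecomposable: given two transverse arcs of the dual tree, one follows a recurrent leaf to cover the first by finitely many translates of the second with consecutive translates overlapping in nondegenerate arcs. This again contradicts our assumption. In both cases the assumption that $T$ is not indecomposable fails, so $T^{\phi}_+$, and hence also $T^{\phi}_-$, carries an indecomposable $F_n$-action.

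The main obstacle lies in two places. First, one needs the transverse covering to be chosen \emph{$\phi$-equivariantly}, so that a power of $\phi$ descends to an automorphism of the skeleton; this is what forces the appeal to the canonical (finest) transverse covering of a tree with dense orbits, rather than an arbitrary one. Second, in the non-atoroidal (geometric) case full irreducibility alone does not rule out a $\phi$-periodic conjugacy class, so no purely group-theoretic contradiction is available there; instead one must recognise $T$ as the dual tree of a minimal filling measured foliation on a surface and invoke the classical indecomposability of such trees.
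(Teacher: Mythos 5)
Your argument never gets off the ground at its central step. The paper does not prove this lemma at all: it is quoted from Coulbois--Hilion (the ``botany'' paper, Theorem 2.1), where indecomposability of $T_\pm$ is obtained from train track technology and the structure of the dual lamination (mixing of $T_+$ plus the minimality of the attracting/repelling laminations, in the spirit of Lemma~\ref{diagLemma} and Lemma~\ref{lemma_bhf_limit}), not from producing an invariant splitting of $F_n$. Your route instead hinges on the dichotomy ``if $T$ is not indecomposable then it admits a nontrivial transverse covering, and one may take the canonical \emph{finest} one, equivalently the decomposition of $T$ into indecomposable and simplicial pieces.'' That is not an available theorem for an arbitrary minimal very small $F_n$-tree with dense orbits. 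Decompositions of this kind come from Rips theory and are established for \emph{geometric} trees; the attracting tree of a nongeometric fully irreducible automorphism is not geometric, and no general ``finest transverse covering'' or ``decomposition into indecomposable pieces'' is known (Guirardel proves the implication you do not need: indecomposable trees admit only the trivial transverse covering). Since the canonicity of $\mathcal{Y}$ is exactly what you use to make the covering invariant under the homothety $H$ and hence to get a $\phi^k$-invariant splitting --- and you yourself flag this as the main obstacle --- the contradiction is never actually produced. A repair along the lines of the literature would argue directly: prove $T_+$ is mixing from a train track representative and then upgrade to indecomposability using the minimality (up to diagonal leaves) of $L(T_+)=\textup{diag}(\Lam_-)$, which is the Coulbois--Hilion/Reynolds criterion.

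Two smaller points. First, even granting a skeleton splitting preserved by a power of $\phi$, your Case 2 inference ``some edge group is nontrivial, hence some nontrivial element has $\phi$-periodic conjugacy class'' does not follow: edge stabilizers of the skeleton are intersections of point stabilizers with stabilizers of pieces, need not be cyclic, and an automorphism can preserve the conjugacy class of a nontrivial subgroup without preserving the conjugacy class of any of its elements; so the reduction to the geometric (non-atoroidal) case is not justified as stated. Second, your treatment of that geometric case is sound in spirit --- the lemma does cover geometric fully irreducible automorphisms, whose trees are the indecomposable trees of surface type, and the ``follow a recurrent leaf'' argument for dual trees of minimal filling foliations is the standard one --- but in the cited proof this case is handled by the same lamination criterion rather than as a separate surface argument, so nothing in your proposal supplies the missing general-case mechanism.
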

   
  \begin{defn}
      Let $T_+$ and $T_-$ be the attracting and repelling trees of a fully irreducible free group automorphism. By Theorem \ref{thm:Q_map}, there are $F_n$-equivariant, surjective maps \[\cQ_+:\p F_n \rightarrow \hT_+ \,\, \text{     and     } \,\, \cQ_-:\p F_n \rightarrow \hT_-\] which are continuous with respect to the observers' topology. In addition, points in $\pT_+$ and $\pT_-$ have exactly one pre-image by $\cQ_+$ and $\cQ_-$, respectively. Define the dual laminations $L(T_+)$ and $L(T_-)$ as in Definition \ref{def:lam}. 
  \end{defn}

    \begin{prop} \cite[Proposition 3.22]{kapovichlustig} \label{lem:T+-int_empty}
     $L(T_+) \cap L(T_-) = \emptyset$.
      \qed
    \end{prop}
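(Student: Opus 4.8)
The plan is to derive this from the Kapovich--Lustig intersection form $\langle\,\cdot\,,\,\cdot\,\rangle$ between very small $F_n$-trees and geodesic currents, together with their criterion that for a minimal very small $F_n$-tree $T$ with dense orbits and a geodesic current $\mu$, one has $\langle T,\mu\rangle=0$ if and only if $\mathrm{supp}(\mu)\subseteq L(T)$. Suppose, for contradiction, that some leaf $\ell=(X,Y)$ lies in $L(T_+)\cap L(T_-)$. First I would produce a nonzero geodesic current supported in this intersection. Fix a basis $A$ of $F_n$, realize $\ell$ as a bi-infinite reduced word $w=\cdots w_{-1}w_0w_1\cdots$, and let $\mu_0$ be a weak-$*$ accumulation point of the normalized counting currents $\|w_{[-n,n]}\|^{-1}\,\eta_{w_{[-n,n]}}$ of longer and longer subwords of $w$. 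By compactness of the space of projective geodesic currents such a limit exists and is nonzero, and since $L(T_+)$ and $L(T_-)$ are closed and $F_n$-invariant we get $\mathrm{supp}(\mu_0)\subseteq L(T_+)\cap L(T_-)$; hence $\langle T_+,\mu_0\rangle=\langle T_-,\mu_0\rangle=0$ by the criterion.

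The next step is to flow $\mu_0$ to an eigencurrent. Since $\phi$ projectively fixes $T_\pm$ and the intersection form is homogeneous and $\Out(F_n)$-invariant, $\langle T_\pm,\mu_0\rangle=0$ forces $\langle T_\pm,\mu_0\phi^k\rangle=0$ for all $k\in\Z$, and so, by the criterion again, $\mathrm{supp}(\mu_0\phi^k)\subseteq L(T_\pm)$ for all $k$. As $\phi$ is fully irreducible it acts on the space of projective geodesic currents with north--south dynamics, whose two fixed points are the classes of the attracting and repelling currents $\mu_+$ and $\mu_-$; since these are distinct, after replacing $\phi$ by $\phi^{-1}$ if necessary (which interchanges $T_+$ with $T_-$ and $\mu_+$ with $\mu_-$ and leaves the situation symmetric) we may assume $[\mu_0]\neq[\mu_-]$, so that $\mu_0\phi^k$ converges to $[\mu_+]$. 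As each $\mu_0\phi^k$ is supported in the closed set $L(T_-)$, so is the limit: $\mathrm{supp}(\mu_+)\subseteq L(T_-)$, and therefore $\langle T_-,\mu_+\rangle=0$ by the criterion once more.

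It remains to contradict this, i.e.\ to prove that $\langle T_-,\mu_+\rangle>0$; this positivity is the dynamical core of the argument, and the step I expect to be the main obstacle. One way to see it: choose a conjugacy class $c$ with $\ell_{T_-}(c)>0$ (one exists, since $T_-$ is a nondegenerate minimal tree), and observe that $\|\Phi^{-k}(c)\|^{-1}\,\eta_{\Phi^{-k}(c)}$ also converges to $\mu_+$, while $\ell_{T_-}(\Phi^{-k}(c))=\lambda^k\,\ell_{T_-}(c)$ and $\|\Phi^{-k}(c)\|$ grow at the same exponential rate $\lambda$ (the expansion factor of $\phi^{-1}$), because $\phi^{-1}$ is fully irreducible; continuity of $\langle T_-,\,\cdot\,\rangle$ on currents then yields $\langle T_-,\mu_+\rangle=\lim_k \ell_{T_-}(\Phi^{-k}(c))/\|\Phi^{-k}(c)\|>0$. (Alternatively, this inequality may be quoted directly from Kapovich--Lustig's study of the intersection form for fully irreducible automorphisms.) Granting it, we have the desired contradiction, so $L(T_+)\cap L(T_-)=\emptyset$. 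The only routine points needing care are that the weak-$*$ limits used above are genuinely nonzero and supported where claimed --- standard facts about counting currents --- and the exponential growth comparison in the last step.
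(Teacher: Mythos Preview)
The paper does not give its own proof of this proposition; it simply quotes it from Kapovich--Lustig with a \qed. Your outline follows the Kapovich--Lustig philosophy (intersection form and currents), so in spirit you are reconstructing the cited argument. However, there is a genuine index error that makes the contradiction you reach a non-contradiction.

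The problem is in your final two steps. You flow $\mu_0$ forward by $\phi$ to the attracting current $\mu_+$ and conclude $\langle T_-,\mu_+\rangle=0$. But this equality is \emph{always} true and cannot be contradicted: one has $\mathrm{supp}(\mu_+)=\Lambda_+^{\phi}$, and $\Lambda_+^{\phi}\subseteq\mathrm{diag}(\Lambda_+^{\phi})=L(T_-)$ (this is Lemma~\ref{diagLemma} in the paper), so the Kapovich--Lustig criterion gives $\langle T_-,\mu_+\rangle=0$ unconditionally. Correspondingly, your ``positivity'' computation does not prove $\langle T_-,\mu_+\rangle>0$. The sequence $\|\Phi^{-k}(c)\|^{-1}\eta_{\Phi^{-k}(c)}$ is obtained by iterating $\phi^{-1}$, so by north--south dynamics it converges to $\mu_-$, not to $\mu_+$; your growth estimate therefore establishes $\langle T_-,\mu_-\rangle>0$, which is correct but irrelevant to the contradiction you wrote.

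The fix is a one-symbol swap. Your flowing argument equally well gives $\mathrm{supp}(\mu_+)\subseteq L(T_+)$, hence $\langle T_+,\mu_+\rangle=0$. \emph{That} is the contradiction: taking $c$ with $\ell_{T_+}(c)>0$ and iterating forward, $\|\Phi^{k}(c)\|^{-1}\eta_{\Phi^{k}(c)}\to\mu_+$ while $\ell_{T_+}(\Phi^{k}(c))=\lambda_+^{k}\ell_{T_+}(c)$ and $\|\Phi^{k}(c)\|\asymp\lambda_+^{k}$, so continuity of the pairing yields $\langle T_+,\mu_+\rangle>0$. With this correction your strategy goes through and matches the Kapovich--Lustig argument.
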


    \begin{remark}
      Let $T$ be an $\R$-tree with a very small, minimal action of $F_n$ by isometries and dense orbits, let $\cQ:\p F_n \rightarrow \hT$ be the $\cQ$-map, and let $L(T)$ be the dual lamination. The map $\cQ$ induces a map \[\cQ:L(T) \rightarrow \hT \quad \text{ by } \quad \cQ(X,Y) = \cQ(X) = \cQ(Y).\] 
      Thus, there are maps $\cQ_+\from L(T_+) \to \hT_+$ and $\cQ_-\from L(T_-) \to \hT_-$.
    \end{remark}

    Bestvina--Feighn--Handel \cite{tits0} define {\it attracting and repelling laminations} $\Lam_+ = \Lam_+^{\phi}$ and $\Lam_- = \Lam_-^{\phi}$, respectively, associated to a fully irreducible $\phi \in \out$. As we do not use the definition of these laminations explicitly, we refer the reader to \cite{tits0} for details and briefly recall the facts relevant to this paper.  Similar to the structure of the attracting and repelling trees $T_+$ and $T_-$, the attracting and repelling laminations satisfy $\Lam^{\phi}_+ = \Lam^{\phi^{-1}}_-$ and $\Lam^{\phi}_- = \Lam^{\phi^{-1}}_+$. Moreover, a strong  relationship between $\Lam_+$ and $L(T_-)$ can be seen using the following construction: 
    \[\text{diag}(\Lam_\pm) = \{ (\xi_1, \xi_m) \in \p^2F_n \mid \exists \xi_2, \dots , \xi_{m-1} \in \partial F_n \text{ with } (\xi_i, \xi_{i+1}) \in \Lam_\pm \text{ for }  1 \leq i \leq m-1 \}.\]

    \begin{lemma}\cite{chr,kapovichlustig_laminations}\label{diagLemma}
      $L(T_-) = \textup{diag}(\Lam_+)$ and $L(T_+) = \textup{diag}(\Lam_-)$. 
       \qed
    \end{lemma}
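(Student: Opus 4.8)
The plan is to prove the single equality $L(T_-) = \textup{diag}(\Lam_+)$; the other one then follows formally from the identities $T^{\phi}_+ = T^{\phi^{-1}}_-$ and $\Lam^{\phi}_- = \Lam^{\phi^{-1}}_+$ by applying the first equality to $\phi^{-1}$. So I fix a fully irreducible $\phi$, its repelling tree $T_-$, and its attracting lamination $\Lam_+$.

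For the inclusion $\textup{diag}(\Lam_+) \subseteq L(T_-)$, I would first observe that $L(T_-)$ is automatically \emph{diagonally closed}: if $(X,Y),(Y,Z)\in L(T_-)$ then $\cQ_-(X)=\cQ_-(Y)=\cQ_-(Z)$, so $(X,Z)\in L(T_-)$, and induction on the length of a chain gives $\textup{diag}(L(T_-))=L(T_-)$. It therefore suffices to show $\Lam_+\subseteq L(T_-)$, i.e.\ that $\cQ_-$ collapses every leaf of $\Lam_+$. Here I would fix a train track representative $f\from\Gamma\to\Gamma$ of $\phi$, recall that, realized in $\Gamma$, the leaves of $\Lam_+$ are exactly the bi-infinite lines all of whose finite subpaths occur inside some iterate $f^n(e)$ of an edge — and that each $f^n(e)$ is a legal path — and then feed these legal lines into the Levitt--Lustig description of $\cQ_-$ via bounded cancellation maps $\widetilde\Gamma\to T_-$, as in \cite{levittlustig,coulboishilionlustig_ot}, to conclude that each such line has zero length in $T_-$, equivalently $\cQ_-(X)=\cQ_-(Y)$ for every $(X,Y)\in\Lam_+$. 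This is the standard statement that the attracting lamination of $\phi$ is carried by the repelling tree of $\phi$.

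For the reverse inclusion $L(T_-)\subseteq\textup{diag}(\Lam_+)$, I would take $(X,Y)\in L(T_-)$ and set $p=\cQ_-(X)=\cQ_-(Y)$. Since points of $\pT_-$ have a unique $\cQ_-$-preimage by Theorem~\ref{thm:Q_map}, the point $p$ lies in the metric completion $\overline{T}_-$, and the fibre $\cQ_-^{-1}(p)$ is finite (part of the description of the $\cQ$-map for such trees, cf.\ \cite{coulboishilionlustig_ot}). I would then invoke the indecomposability of $T_-$ (Lemma~\ref{lemma_iwipindecomposable}) to bring in the structure theory of dual laminations of indecomposable trees from \cite{chr} together with \cite{kapovichlustig_laminations}: the dual lamination of an indecomposable tree has a unique minimal sublamination and is equal to the diagonal closure of that sublamination. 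Concretely, this is established by analysing the finite fibre $\cQ_-^{-1}(p)$ together with the directions at $p$ in $\overline{T}_-$, showing that the leaves of the minimal sublamination passing through $p$ connect up the points of $\cQ_-^{-1}(p)$. Since $\Lam_+$ is a minimal lamination contained in $L(T_-)$ by the previous paragraph, it is this unique minimal sublamination, and hence $L(T_-)=\textup{diag}(\Lam_+)$. Applying this to $\phi^{-1}$ and translating via $T^{\phi}_+=T^{\phi^{-1}}_-$, $\Lam^{\phi}_-=\Lam^{\phi^{-1}}_+$ yields $L(T_+)=\textup{diag}(\Lam_-)$.

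The genuine obstacles are the two imported ingredients: showing that leaves of $\Lam_+$ are $\cQ_-$-collapsed, which needs the Levitt--Lustig / bounded cancellation machinery for the $\cQ$-map; and the structural statement identifying $L(T_-)$ with the diagonal closure of its unique minimal sublamination, which rests on the finite-fibre analysis at points of $\overline{T}_-$ for indecomposable trees. Once these are granted, everything else — diagonal closedness of $L(T_-)$, minimality of $\Lam_+$, and the $\phi\leftrightarrow\phi^{-1}$ symmetry — is routine bookkeeping.
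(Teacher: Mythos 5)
The paper does not actually prove this lemma: it is quoted from \cite{chr,kapovichlustig_laminations} (hence the \qed with no argument), so the only fair comparison is with how those sources, and the surrounding lemmas of the paper, establish it. Your reconstruction is correct and follows essentially that same route. The inclusion $\textup{diag}(\Lam_+)\subseteq L(T_-)$ via the observation that $L(T_-)$, being the fibre relation of $\cQ_-$, is diagonally closed, reduced to the standard fact $\Lam_+\subseteq L(T_-)$; and the reverse inclusion via the Coulbois--Hilion--Reynolds structure theorem that for an indecomposable tree the dual lamination is the diagonal closure of its unique minimal sublamination, which must be $\Lam_+$ by minimality --- this is exactly the content of the cited results, and it parallels the paper's own use of \cite{chr} in Lemma~\ref{lemma_bhf_limit} (there for the derived-lamination variant $L(T_-)'=\Lam_+$, with indecomposability supplied by Lemma~\ref{lemma_iwipindecomposable}). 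The only loose point is your phrase that legal leaves of $\Lam_+$ have ``zero length in $T_-$'': legality is with respect to a train track map for $\phi$, so to see that $\cQ_-$ collapses such a leaf you need the scaling of translation lengths on $T_-$ under $\phi$ (images $\Phi^n(g)$ shrink in $T_-$) together with bounded cancellation for an equivariant map $\widetilde{\Gamma}\to T_-$; but this is the standard argument and is precisely what \cite{kapovichlustig_laminations} supplies, so I regard your proposal as a faithful assembly of the cited ingredients rather than a genuinely different proof.
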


  We shall use another relationship between $\Lam_+$ and $L(T_-)$, given as follows.
    
    \begin{defn}\cite{chr} \label{defn_deriv_lam}
     Let $\cL$ be a lamination in $\doublebndry$. The {\it derived lamination of $\cL$}, denoted $\cL'$, is the set of limit points in $\cL$. That is,
    \[ \ell \in \cL' \iff \exists \{\ell_i\} \subset \cL \text{ with } \lim_{i \to \infty} \ell_i = \ell. \] 
    \end{defn}

  \begin{lemma}\cite{chr} \label{lemma_bhf_limit} 
    Let $\phi$ be a fully irreducible outer automorphism, let $\Lam_+$ denote its attracting lamination, and let $L(T_-)$ denote the dual lamination of its repelling tree. Then $\Lam_+ = L(T_-)'$. 
  \end{lemma}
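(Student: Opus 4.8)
The plan is to combine Lemma~\ref{diagLemma}, which identifies $L(T_-)$ with $\textup{diag}(\Lam_+)$, with two soft inputs: that $\Lam_+$ has no isolated leaves, and that $F_n$ acts on $\partial F_n$ as a convergence group. I would prove the inclusions $\Lam_+\subseteq L(T_-)'$ and $L(T_-)'\subseteq\Lam_+$ separately; the second amounts to showing that every leaf of $L(T_-)\setminus\Lam_+$ is isolated in $L(T_-)$.

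\emph{The inclusion $\Lam_+\subseteq L(T_-)'$.} By Lemma~\ref{diagLemma}, $\Lam_+\subseteq\textup{diag}(\Lam_+)=L(T_-)$, since a single leaf of $\Lam_+$ is a length-two chain in the definition of $\textup{diag}$. Since $\phi$ is fully irreducible, $\Lam_+$ has no isolated leaves~\cite{tits0}; hence a leaf of $\Lam_+$ is non-isolated in the larger set $L(T_-)$ as well, and therefore lies in $L(T_-)'$.

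\emph{The inclusion $L(T_-)'\subseteq\Lam_+$.} As $L(T_-)$ is closed, $L(T_-)'\subseteq L(T_-)=\Lam_+\sqcup\cD$, where $\cD:=L(T_-)\setminus\Lam_+$, so it suffices to prove $\cD\cap L(T_-)'=\emptyset$. Suppose instead that $\ell_0=(\alpha_0,\beta_0)\in\cD\cap L(T_-)'$, and pick pairwise distinct $\ell_k=(A_k,B_k)\in L(T_-)$ with $\ell_k\to\ell_0$; since $\Lam_+$ is closed and $\ell_0\notin\Lam_+$, we may assume each $\ell_k\in\cD$. For each $k$ choose a chain $A_k=\xi^{(k)}_1,\dots,\xi^{(k)}_{m_k}=B_k$ with $(\xi^{(k)}_i,\xi^{(k)}_{i+1})\in\Lam_+$ witnessing $\ell_k\in\textup{diag}(\Lam_+)$. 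Applying the $\cQ_-$-equivariant description of $L(T_-)$ along this chain shows that all the $\xi^{(k)}_i$ lie in a single $\cQ_-$-fiber $\cQ_-^{-1}(p_k)$, and that this fiber has at least three elements (otherwise the chain would exhibit $\ell_k$ as a leaf of $\Lam_+$). Now I would invoke the $\cQ$-index theory of Coulbois--Hilion~\cite{coulboishilion}, applied to the indecomposable tree $T_-$ (Lemma~\ref{lemma_iwipindecomposable}): the points $q$ for which $\cQ_-^{-1}(q)$ has more than two elements fall into finitely many $F_n$-orbits, and each such fiber is finite. Passing to subsequences, this yields $p_k=g_k p_\ast$ for a single point $p_\ast$, and then $\ell_k=g_k L_\ast$ for a single leaf $L_\ast=(\zeta_1,\zeta_r)\in\cD$, with the $g_k\in F_n$ pairwise distinct (as the $\ell_k$ are). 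I expect this reduction, rather than the argument that follows, to be the main obstacle.

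\emph{Conclusion.} Recall that $F_n$ acts on $\partial F_n$ as a convergence group, so some subsequence of the distinct elements $g_k$ admits points $a,b\in\partial F_n$ with $g_k\xi\to a$ for every $\xi\neq b$. Fix a chain $\zeta_1=\eta_1,\dots,\eta_r=\zeta_r$ (with $r\geq 3$, all $\eta_i\in\cQ_-^{-1}(p_\ast)$) and $(\eta_i,\eta_{i+1})\in\Lam_+$ representing $L_\ast$. Since $g_k L_\ast=(g_k\zeta_1,g_k\zeta_r)$ converges to $\ell_0\in\doublebndry$, whose two coordinates are distinct, exactly one of $\zeta_1,\zeta_r$ equals $b$. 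If $\zeta_1=b$, then $g_k\zeta_r\to a$ forces $\beta_0=a$, and since $\eta_2\neq b$ the $\Lam_+$-leaves $g_k(\eta_1,\eta_2)=(A_k,g_k\eta_2)$ converge to $(\alpha_0,a)=\ell_0$; if $\zeta_r=b$, then symmetrically the $\Lam_+$-leaves $g_k(\eta_{r-1},\eta_r)$ converge to $\ell_0$. Either way $\ell_0$ is a limit of leaves of $\Lam_+$, so $\ell_0\in\Lam_+$ because $\Lam_+$ is closed, contradicting $\ell_0\in\cD$. Hence $\cD\cap L(T_-)'=\emptyset$, completing the proof.
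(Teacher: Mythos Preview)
Your argument is correct, but it takes a substantially different route from the paper's. The paper dispatches the lemma in three lines by invoking \cite[Theorem~A]{chr}: since $T_-$ is indecomposable (Lemma~\ref{lemma_iwipindecomposable}), the dual lamination $L(T_-)$ contains a \emph{unique} minimal sublamination, and that sublamination coincides with $L(T_-)'$; since $\Lam_+$ is minimal and contained in $L(T_-)$, it must equal $L(T_-)'$.

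By contrast, you bypass the uniqueness-of-minimal-sublamination theorem and argue directly. Your first inclusion uses only that $\Lam_+$ is perfect and sits inside $L(T_-)$ via Lemma~\ref{diagLemma}. Your second inclusion replaces the abstract minimality statement with a concrete finiteness-plus-dynamics argument: the $\cQ$-index bound of \cite{coulboishilion} forces the diagonal leaves to come from finitely many $F_n$-orbits of finite fibers, and then the convergence-group property of $F_n\curvearrowright\partial F_n$ pushes any accumulation of such leaves back into $\Lam_+$. The step you flagged as the ``main obstacle'' is in fact fine: once $p_k=g_kp_\ast$ and $\cQ_-^{-1}(p_\ast)$ is finite, there are only finitely many ordered pairs in that fiber, so a further subsequence gives $\ell_k=g_kL_\ast$; distinctness of the $\ell_k$ then forces the $g_k$ to be distinct. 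What your approach buys is a self-contained explanation of \emph{why} diagonal leaves are isolated (namely, they live in a locally finite $F_n$-set), at the cost of a longer proof. The paper's approach is shorter but treats \cite[Theorem~A]{chr} as a black box; your argument effectively reproves the special case of that theorem needed here.
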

  \begin{proof} 
    Let $\phi$ be a fully irreducible outer automorphism. By Lemma ~\ref{lemma_iwipindecomposable}, the action of $F_n$ on $T_-$ is indecomposable. By \cite[Theorem A]{chr}, the lamination $L(T_-)$ contains a unique minimal sublamination which equals its derived lamination $L(T_-)'$. On the other hand, the attracting lamination $\Lam_+$ is minimal and contained in $L(T_-)$. Therefore, $L(T_-)' = \Lam_+$.
  \end{proof}
  
\subsection{Realization of leaves}

  \begin{defn}
    Let $\cL$ be a lamination of $F_n$. The {\it ends of the lamination $\cL$} is the set 
    \[ \mathcal{E}\cL = \{ X \in \partial F_n \mid \exists Y \in \partial F_n \text{ such that } (X,Y) \in \cL\}.\]
  \end{defn}

  \begin{lemma}\label{endsToBndry}
    Let $\phi \in \out$ be a fully irreducible outer automorphism, let $T_{\pm}$ be the attracting and repelling trees of $\phi$, and let $\cQ_+:\p F_n \rightarrow \hT_+$ be the $\cQ$-map. For every $X \in \cE L(T_-)$, $\cQ_+(X) \in \p T_+$. 
  \end{lemma}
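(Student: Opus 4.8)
The plan is to show that if $X\in\cE L(T_-)$, then $\cQ_+(X)$ lies in $\pT_+$ rather than in $\oT_+$, by exploiting the disjointness of the two laminations established in Proposition~\ref{lem:T+-int_empty} together with the characterization of $\Lam_+$ as the derived lamination of $L(T_-)$ (Lemma~\ref{lemma_bhf_limit}) and the relation $L(T_+)=\operatorname{diag}(\Lam_-)$ of Lemma~\ref{diagLemma}. First I would recall from the theory of the $\cQ$-map (Theorem~\ref{thm:Q_map}) that $\cQ_+$ is surjective onto $\hT_+$ and is injective over $\pT_+$, so the content of the statement is entirely about ruling out the possibility that $\cQ_+(X)\in\oT_+$, i.e.\ that $X$ lies in a nondegenerate $\cQ_+$-fiber over a point of the metric completion. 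Equivalently, I must show that $X$ is not one of the endpoints of a leaf of $L(T_+)$.

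The key step is the following dichotomy for a point $X\in\partial F_n$: either $\cQ_+(X)\in\pT_+$, in which case (by Theorem~\ref{thm:Q_map}) $X$ is the unique preimage and we are done; or $\cQ_+(X)\in\oT_+$, in which case the fiber $\cQ_+^{-1}(\cQ_+(X))$ contains at least two points, so there is $Y\neq X$ with $(X,Y)\in L(T_+)$, i.e.\ $X\in\cE L(T_-)\cap\cE L(T_+)$. So it suffices to prove that $\cE L(T_-)\cap\cE L(T_+)=\emptyset$. Suppose for contradiction that some $X$ lies in both. Using $L(T_+)=\operatorname{diag}(\Lam_-)$ and $L(T_-)=\operatorname{diag}(\Lam_+)$, the point $X$ is an endpoint of a leaf of a diagonal extension of $\Lam_+$ and also of a diagonal extension of $\Lam_-$. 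Since $X\in\cE L(T_-)$ and $\Lam_+=L(T_-)'$ is the unique minimal sublamination of $L(T_-)$ with $L(T_-)$ contained in $\operatorname{diag}(\Lam_+)$, a standard argument with the minimality of $\Lam_+$ shows that through $X$ there is a leaf of $\Lam_+$; symmetrically, through $X$ there is a leaf of $\Lam_-$. This contradicts $\Lam_+\cap\Lam_-=\emptyset$ (which follows from Proposition~\ref{lem:T+-int_empty} via Lemma~\ref{diagLemma}, since $\Lam_\pm\subset L(T_\mp)$).

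The delicate point — and the step I expect to be the main obstacle — is the claim that an endpoint of a leaf of $\operatorname{diag}(\Lam_+)=L(T_-)$ is actually an endpoint of a leaf of $\Lam_+$ itself, rather than merely a ``diagonal'' endpoint that need not be hit by the minimal sublamination. In general a diagonal extension adds leaves $(\xi_1,\xi_m)$ whose extreme endpoints $\xi_1,\xi_m$ are genuine endpoints of leaves of $\Lam_+$ (they are the first and last of a chain $(\xi_i,\xi_{i+1})\in\Lam_+$), so in fact $\cE(\operatorname{diag}\Lam_+)=\cE\Lam_+$; this is where I would be careful and cite the precise structure of diag from \cite{chr}. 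Granting that identity, $\cE L(T_-)=\cE\Lam_+$ and $\cE L(T_+)=\cE\Lam_-$, and the desired disjointness $\cE\Lam_+\cap\cE\Lam_-=\emptyset$ is immediate from $\Lam_+\cap\Lam_-=\emptyset$ together with the fact that a lamination is determined by pairs of endpoints — more precisely, if $X$ is an endpoint of both a leaf of $\Lam_+$ and a leaf of $\Lam_-$ there is no contradiction yet, so I actually need the stronger statement that $L(T_+)$ and $L(T_-)$ have no common endpoint, which I would instead extract directly by arguing at the level of the $\cQ$-maps: if $\cQ_+(X)\in\oT_+$ and $\cQ_-(X)\in\oT_-$ simultaneously fail, etc. I would therefore structure the final write-up around the clean implication ``$\cQ_+(X)\notin\pT_+\Rightarrow X$ is a non-isolated endpoint forcing $(X,Y)\in L(T_+)$ for some $Y$, hence $X\in\cE L(T_+)\cap\cE L(T_-)$'', and then close the loop with the endpoint-disjointness of $L(T_+)$ and $L(T_-)$, which is the genuinely new small lemma to isolate.
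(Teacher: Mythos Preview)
Your proposal has a genuine gap that you yourself identify but do not close. You reduce the statement to showing $\cE L(T_+)\cap\cE L(T_-)=\emptyset$, and correctly observe that $\cE L(T_\mp)=\cE\Lambda_\pm$ via Lemma~\ref{diagLemma}. But then you try to deduce $\cE\Lambda_+\cap\cE\Lambda_-=\emptyset$ from $\Lambda_+\cap\Lambda_-=\emptyset$, and, as you note, this does not follow: two leaves from different laminations may share an endpoint without contradicting leaf-disjointness. You then defer to a ``new small lemma'' asserting endpoint-disjointness of $L(T_+)$ and $L(T_-)$---but that lemma is essentially equivalent to what you are trying to prove (under your own dichotomy), so the argument is circular. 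There is also an earlier unjustified step: you assume that whenever $\cQ_+(X)\in\oT_+$ the fiber $\cQ_+^{-1}(\cQ_+(X))$ has at least two elements, but Theorem~\ref{thm:Q_map} only guarantees uniqueness of preimages over $\partial T_+$; it says nothing about fibers over $\oT_+$ being non-singletons.

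The paper's proof uses external geometric input that a purely lamination-theoretic approach lacks. It invokes the Handel--Mosher train track theory: for an affine train track representative $g:\Gamma\to\Gamma$ of $\phi$ there is an $F_n$-equivariant edge-isometry $f_g:\widetilde\Gamma\to T_+$ which is a \emph{$\Lambda_+$-isometry}, meaning every leaf of $\Lambda_+$, realized as a line $\ell:\R\to\widetilde\Gamma$, has $f_g\circ\ell$ an isometric embedding into $T_+$. Hence the ends of each such leaf land in $\partial T_+$, and since limits along rays compute $\cQ_+$ this gives $\cQ_+(X)\in\partial T_+$ for every $X\in\cE\Lambda_+=\cE L(T_-)$. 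This fact---that leaves of $\Lambda_+$ are realized as honest bi-infinite geodesics in $T_+$---is the missing ingredient, and it cannot be extracted from the disjointness $L(T_+)\cap L(T_-)=\emptyset$ alone.
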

  \begin{proof}
    This argument is given in \cite{handelMosher_axes}. We include an outline for clarity (keeping the notation used by the reference), and omitting the definitions; consult \cite{handelMosher_axes} for more details. 
    Let $g\from \Gamma \to \Gamma$ be an affine train track representative of $\phi$, and let $\widetilde{\G}$ denote the universal cover of $\G$. Via the marking on $\G$, the boundary $\p \widetilde{\G}$ may be identified with $\p F_n$, and the attracting lamination $\Lam_+$ may be identified with a set of geodesic lines in $\widetilde{\G}$. There is an $F_n$-equivariant edge-isometry $f_g: \widetilde{\G} \rightarrow T_+$ which is an {\it $\Lam_+$-isometry}; that is, for each leaf $\ell \in \Lam_+$ viewed as an isometric embedding $\ell:\R \rightarrow \widetilde{\G}$, the map $f_g \circ \ell:\R \rightarrow T_+$ is an isometric embedding. Therefore, $f_g(\ell(\pm \infty)) := \lim_{t \to \pm \infty} f_g \circ \ell(t)$ exists and lies in $\partial T_+$. Since the limit exists, $\cQ_+(\ell(\pm \infty)) = f_g (\ell(\pm \infty))$ \cite{levittlustig}. Hence, $\cQ_+$ maps the ends of $\Lam_+$ to $\partial T_+$. By Lemma \ref{diagLemma}, $\mathcal{E}L(T_-) = \mathcal{E}\Lam_+$, so $\cQ_+(X) \in \p T_+$. 
  \end{proof}

  \begin{defn}\label{defn:ell^+}
    Let $\ell = (X,Y) \in L(T_-)$. By Lemma \ref{endsToBndry}, $\cQ_+(X), \cQ_+(Y) \in \partial T_+$, and by Proposition~\ref{lem:T+-int_empty}, $\cQ_+(X) \neq \cQ_+(Y)$. The {\it realization of $\ell$ in $T_+$} is the bi-infinite geodesic $\ell^+$ in $T_+$ connecting $\cQ_+(X)$ and $\cQ_+(Y)$. 
  \end{defn}

   Proposition~\ref{PropOfRelazationsOfLam} below compares the convergence of leaves of the lamination $L(T_-)$ in the topology on $\p^2F_n$ to the convergence of the realization of leaves in the attracting tree $T_+$ in the Hausdorff topology. 

  \begin{defn}
    If $\{\ell_i^+ \, | \, i \in \N\}$ is a sequence of bi-infinite geodesics in $T_+$ and  $\ell^+$ is a bi-infinite geodesic in $T_+$, then $\lim_{i \rightarrow \infty} \ell_i^+ = \ell^+$ in the {\it Hausdorff topology} on $T_+$ if for any subarc $I \subset \ell^+$, there exists $N \in \N$ so that $I \subset \ell_i^+$ for all $i>N$.
  \end{defn}

  \begin{defn} \label{def:unitcyl}
   Let $A$ be a basis for $F_n$. The set $\p^2 F_n$ may be identified with the space of pairs $(X,Y)$ of infinite reduced words in $A \cup A^{-1}$ with $X \neq Y$. For an infinite reduced word $X$, let $X_1$ denote the first letter of $X$. Let \[C_A :=\{(X,Y) \in \p^2F_n \, | \, X_1 \neq Y_1\}\] be the {\it unit cylinder associated to $A$}. 
  \end{defn}

  \begin{lemma}\label{lemma_ca_compact}
   Let $A$ be a basis of $F_n$. The unit cylinder associated to $A$ is open and compact in~$\p^2 F_n$. 
    \qed
  \end{lemma}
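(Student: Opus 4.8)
The claim is that the unit cylinder $C_A = \{(X,Y) \in \p^2 F_n \mid X_1 \neq Y_1\}$ is both open and compact in $\p^2 F_n$. The plan is to treat openness and compactness separately, using the standard identification of $\p^2 F_n$ with pairs of distinct infinite reduced words in $A \cup A^{-1}$ equipped with the subspace topology from the product topology on $\p F_n \times \p F_n$.

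For openness, I would note that the map sending an infinite reduced word $X$ to its first letter $X_1$ is continuous from $\p F_n$ to the discrete set $A \cup A^{-1}$: the preimage of each letter $a$ is the basic cylinder of infinite words beginning with $a$, which is clopen in $\p F_n$. Hence the map $(X,Y) \mapsto (X_1, Y_1)$ is continuous from $\p^2 F_n$ to $(A \cup A^{-1}) \times (A \cup A^{-1})$, and $C_A$ is the preimage of the open (indeed clopen) set $\{(a,b) : a \neq b\}$. Therefore $C_A$ is open in $\p^2 F_n$; in fact this argument shows $C_A$ is also closed in $\p^2 F_n$, which is the key point for compactness.

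For compactness, the cleanest route is to observe that $C_A$ is a closed subset of the compact space $\p F_n \times \p F_n$. The space $\p F_n$ is compact (it is the Gromov boundary of a hyperbolic group, homeomorphic to a Cantor set), so $\p F_n \times \p F_n$ is compact. Now I claim $C_A$ is closed not merely in $\p^2 F_n$ but in the full product $\p F_n \times \p F_n$: if $(X^{(i)}, Y^{(i)}) \to (X, Y)$ with $X^{(i)}_1 \neq Y^{(i)}_1$ for all $i$, then passing to a subsequence the first letters stabilize, say $X^{(i)}_1 = a$ and $Y^{(i)}_1 = b$ with $a \neq b$ eventually; convergence in the product topology forces $X_1 = a$ and $Y_1 = b$, so $X_1 \neq Y_1$, and in particular $X \neq Y$, so the limit lies in $C_A$. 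Thus $C_A$ is a closed subset of a compact Hausdorff space and hence compact. (Equivalently, $C_A = \bigcup_{a \neq b} \mathrm{Cyl}(a) \times \mathrm{Cyl}(b)$ is a finite union of products of clopen compact sets, each homeomorphic to a product of Cantor sets.)

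The only subtlety worth flagging is the interaction between $\p^2 F_n$ and the full product $\p F_n \times \p F_n$: the diagonal $\Delta$ is removed to form $\p^2 F_n$, so a priori a closed subset of $\p^2 F_n$ need not be closed in the product. The point of the argument above is precisely that $C_A$ avoids a neighborhood of $\Delta$ — two infinite words with distinct first letters are uniformly far apart — so its closure in the product cannot meet $\Delta$, and closedness in $\p^2 F_n$ upgrades to closedness in $\p F_n \times \p F_n$. This is the one place where a careless argument could go wrong; everything else is a routine verification that cylinders are clopen. I expect no real obstacle.
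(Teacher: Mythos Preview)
Your argument is correct and complete; the paper states this lemma without proof (marking it with an immediate \qed), treating it as standard. Your write-up supplies exactly the routine verification one would expect, and you correctly identify and handle the only potential pitfall, namely that closedness in $\p^2 F_n$ must be upgraded to closedness in the full product $\p F_n \times \p F_n$ before compactness follows.
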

  
  \begin{prop}\label{PropOfRelazationsOfLam}
    Let $A$ a basis of $F_n$. Let $C_A  \subset \p^2F_n$ be the unit cylinder associated to $A$ and let $\{\ell_i\mid i \in \N\} \cup \{ \ell\} \subset L(T_-) \cap C_A$. Then, $\lim_{i \to \infty} \ell_i = \ell \in \p^2 F_n$ if and only if  $\lim_{i \to \infty} \ell^+_i = \ell^+ \subset T_+$ in the Hausdorff topology.
  \end{prop}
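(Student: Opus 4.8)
The plan is to prove both directions by carefully transferring convergence statements through the map $\cQ_+$, using that $\cQ_+$ restricted to $\cE L(T_-)$ lands in $\pT_+$ (Lemma~\ref{endsToBndry}) and is injective on leaves of $L(T_-)$ (Proposition~\ref{lem:T+-int_empty}). The core technical point is that $\cQ_+ \from \p F_n \to \hT_+$ is continuous in the observers' topology, but on a \emph{compact} piece of $\p F_n$ we can upgrade observers'-topology statements to metric statements, because the two topologies agree on arcs of $\oT_+$ and because one-point preimages (points of $\pT_+$) make $\cQ_+$ behave well.

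\textbf{Forward direction.} Suppose $\ell_i \to \ell$ in $\p^2 F_n$, writing $\ell_i = (X_i, Y_i)$ and $\ell = (X,Y)$, so $X_i \to X$ and $Y_i \to Y$ in $\p F_n$. Since $\cQ_+$ is continuous for the observers' topology and $\cQ_+(X), \cQ_+(Y) \in \pT_+$ (Lemma~\ref{endsToBndry}), we get $\cQ_+(X_i) \to \cQ_+(X)$ and $\cQ_+(Y_i) \to \cQ_+(Y)$ in the observers' topology on $\hT_+$. Now fix a subarc $I \subset \ell^+ = [\cQ_+(X), \cQ_+(Y)]$, and fix a point $p$ in the interior of $I$. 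The arc $\ell^+$ separates $\hT_+$ into the direction at each endpoint of $I$; more precisely, for the two directions $d_X, d_Y$ at the two endpoints of $I$ that point \emph{away} from $I$ (toward $\cQ_+(X)$ and $\cQ_+(Y)$ respectively), the sets $\{\cQ_+(X) \} \cup d_X$ and $\{\cQ_+(Y)\} \cup d_Y$ are neighborhoods of $\cQ_+(X)$ and $\cQ_+(Y)$ in the observers' topology. Hence for $i$ large, $\cQ_+(X_i) \in d_X$ and $\cQ_+(Y_i) \in d_Y$; since $\ell_i^+$ is the geodesic joining these two points, and since any geodesic from a point of $d_X$ to a point of $d_Y$ must cross all of $I$ (as $I$ lies on the unique arc separating $d_X$ from $d_Y$), we conclude $I \subset \ell_i^+$ for all large $i$. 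This gives $\ell_i^+ \to \ell^+$ in the Hausdorff topology.

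\textbf{Reverse direction.} Suppose $\ell_i^+ \to \ell^+$ in the Hausdorff topology on $T_+$. By Lemma~\ref{lemma_ca_compact}, $C_A$ is compact, so after passing to a subsequence we may assume $\ell_i \to \ell'$ for some $\ell' \in \p^2 F_n$; moreover $\ell' \in C_A$ since $C_A$ is closed in $\p^2F_n$, and in fact $\ell' \in L(T_-) \cap C_A$ since $L(T_-)$ is closed. By the forward direction applied to this subsequence, $\ell_i^+ \to (\ell')^+$ in the Hausdorff topology. But Hausdorff limits of bi-infinite geodesics in an $\R$-tree are unique, so $(\ell')^+ = \ell^+$; since the realization map $\ell \mapsto \ell^+$ on $L(T_-) \cap C_A$ is injective (if $(\ell')^+ = \ell^+$ then $\cQ_+$ agrees on the endpoint data, and $\cQ_+$ is injective on $\pT_+$, forcing $\ell' = \ell$), we get $\ell' = \ell$. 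Since every subsequence of $\{\ell_i\}$ has a further subsequence converging to $\ell$, and $C_A$ is compact Hausdorff, the full sequence converges: $\ell_i \to \ell$ in $\p^2 F_n$.

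\textbf{Main obstacle.} The delicate step is the forward direction: justifying that observers'-topology convergence of the endpoints $\cQ_+(X_i) \to \cQ_+(X)$ forces eventual containment $I \subset \ell_i^+$. This rests on identifying the correct subbasic observers'-topology neighborhoods (directions in $\hT_+$) of the endpoints $\cQ_+(X), \cQ_+(Y)$, checking these are genuine neighborhoods (which uses that points of $\pT_+$ are extremal in $\hT_+$, so that a single direction \emph{is} a neighborhood basis element there), and observing that any geodesic with one endpoint in $d_X$ and the other in $d_Y$ must traverse the separating arc $I$ — a tree-combinatorial fact, but one that must be stated cleanly. A secondary subtlety is confirming that the realization map is injective on $L(T_-) \cap C_A$, which I would isolate as a short remark using Proposition~\ref{lem:T+-int_empty} together with the fact that $\cQ_+$ has unique preimages over $\pT_+$.
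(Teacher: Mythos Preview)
Your proposal is correct and follows essentially the same approach as the paper's proof: the forward direction uses continuity of $\cQ_+$ in the observers' topology to place the endpoints of $\ell_i^+$ in the appropriate directions at the endpoints of a given subarc $I\subset\ell^+$, and the reverse direction uses compactness of $L(T_-)\cap C_A$ together with the forward direction and injectivity of $\cQ_+$ over $\pT_+$ to conclude that every subsequential limit equals $\ell$. Your additional commentary (extremality of points in $\pT_+$, closedness of $L(T_-)$) fills in details the paper leaves implicit, but the strategy is identical.
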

  \begin{proof}
    Let $\ell = (\xi, \eta) \in L(T_-) \cap C_A$ and $\ell_i = (\xi_i, \eta_i) \in L(T_-) \cap C_A$ so that $\lim_{i \to \infty} \ell_i = \ell$. Let $I=[a,b] \subset \ell^+$, and assume that $a$ is between $\cQ_+(\xi)$ and $b$. Let $d_1$ be the direction at $a$ containing $\cQ_+(\xi)$. Since $\cQ_+$ is continuous with respect to the observers' topology, there exists $N_1 \in \N$ so that for all $i>N_1$, $\cQ_+(\xi_i) \in d_1$. Similarly, if $d_2$ is the direction at $b$ containing $\cQ_+(\eta)$, then there exists $N_2 \in \N$ so that for all $i>N_2$, $\cQ_+(\eta_i) \in d_2$. Let $i>\max\{N_1, N_2\}$. Then $\ell_i^+$ contains $[a,b]$ as desired.

    Suppose now that $\lim_{i \to \infty} \ell^+_i = \ell^+$ in the Hausdorff topology. Since $L(T_-)\cap C_A$ is compact by Lemma~\ref{lemma_ca_compact}, the sequence $\{\ell_i\}$ has a convergent subsequence. Let $\tau$ be a partial limit; that is, $\lim_{j\to \infty}\ell_{i_j} = \tau \in \p^2F_n$. By the arguments in the previous paragraph, $\lim_{j\to \infty}\ell_{i_j}^+ = \tau^+$ in the Hausdorff topology on $T_+$. Thus, $\ell^+ = \tau^+$. By Theorem~\ref{thm:Q_map}, the map $\cQ_+$ is injective on $\cE L(T_-)$, so $\tau = \ell$. Therefore, the sequence $\{\ell_i\}$ has a unique partial limit; thus, $\{\ell_i\}$ converges to~$\ell$. 
  \end{proof}

  We conclude this section by describing a property of the realization of leaves of $L(T_-)$ in $T_+$ that we will use in Section \ref{sec_k33s}.

  \begin{defn}
    A {\it star} is a wedge of intervals or a wedge of  rays. The wedge point is called the \emph{middle} of the star.
  \end{defn}

  \begin{prop}\cite{handelMosher_axes}\label{fiberIsStar}
    Let $\ell_1, \ell_2, \dots, \ell_k$ be leaves of the lamination $L(T_-)$ such that $\ell_i$ is asymptotic to $\ell_{i+1}$ for each $i=1, \dots, k-1$. Then $\bigcup_{i=1}^k \ell_i^+$ is a star in $T_+$.
  \end{prop}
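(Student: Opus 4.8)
The plan is to carry out the whole construction inside an affine train track representative of $\phi$, where the combinatorics of the attracting lamination near principal vertices is available, and then to transport the answer into $T_+$ through the $\Lam_+$-isometry. Fix an affine train track representative $g\from\G\to\G$, let $\widetilde{\G}$ be its universal cover with $\p\widetilde{\G}$ identified with $\p F_n$, and let $f_g\from\widetilde{\G}\to T_+$ be the $\Lam_+$-isometry from the proof of Lemma~\ref{endsToBndry}, so that $f_g$ restricts to an isometric embedding on every leaf of $\Lam_+$ and $f_g(\widetilde{\ell})=\ell^+$ whenever $\ell\in\Lam_+$ is realized by the geodesic $\widetilde{\ell}\subset\widetilde{\G}$. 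The first step reduces the statement to leaves of $\Lam_+$. Since $L(T_-)=\textup{diag}(\Lam_+)$ by Lemma~\ref{diagLemma}, each $\ell_i$ is subtended by a finite chain of $\Lam_+$-leaves joining its two endpoints; as consecutive $\ell_i$ share an endpoint, all these chains splice into a single finite chain $m_1,\dots,m_N$ of $\Lam_+$-leaves whose endpoint set contains every endpoint of every $\ell_i$. Then $\ell_i^+$ is the $T_+$-geodesic between two of the images under $\cQ_+$ of these endpoints, so $\ell_i^+\subset\bigcup_j m_j^+$, and the two ideal endpoints of $\ell_i^+$ are distinct by Proposition~\ref{lem:T+-int_empty}, since $\ell_i\in L(T_-)$ is not a leaf of $L(T_+)$. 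Hence if $\bigcup_j m_j^+$ is a star with middle $c$, then each $\ell_i^+$ is a bi-infinite geodesic inside that star joining two of its distinct tips and therefore passes through $c$, so $\bigcup_i\ell_i^+$ is a union of legs of the star through $c$ and is again a star. So it is enough to treat $\ell_1,\dots,\ell_k\in\Lam_+$.

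For a chain in $\Lam_+$, the case $k=2$ is immediate: two bi-infinite geodesics of an $\R$-tree that share an ideal endpoint agree along a ray and then diverge, so their union is a tripod. The content, for longer chains, is to prevent the realizations $\ell_i^+$ from branching at two (or more) different points of $T_+$. Realize $\ell_1,\dots,\ell_k$ by geodesics $\widetilde{\ell}_1,\dots,\widetilde{\ell}_k\subset\widetilde{\G}$. When $\ell_i$ is asymptotic to $\ell_{i+1}$ they share a ray, which is a tail of a singular ray of $\Lam_+$ and hence of an eigenray issuing from a principal vertex of $\widetilde{\G}$. The crucial claim --- which I expect to be the main obstacle --- is that all the singular rays arising along the chain issue from a single principal vertex $\widetilde{v}$; equivalently, the edges $\ell_1,\dots,\ell_k$ span a connected piece of the ideal Whitehead graph that lifts to a piece concentrated at one vertex $\widetilde{v}$, and so (after a coherent choice of lifts) each $\widetilde{\ell}_i$ contains $\widetilde{v}$. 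Establishing this is exactly where the affine train track structure is used: bounded cancellation, the description of leaf germs at a principal vertex via the periodic directions of the derivative map, and the impossibility of ``walking'' from one principal vertex to another along a chain of pairwise-asymptotic leaves. Granting it, $\bigcup_i\widetilde{\ell}_i$ is a star in $\widetilde{\G}$ with middle $\widetilde{v}$, because each $\widetilde{\ell}_i$ is a union of two leaf-rays at $\widetilde{v}$, and two distinct leaf-rays of $\Lam_+$ in the same direction at a principal vertex cannot occur.

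Finally, $f_g$ carries this star to a star in $T_+$ with middle $f_g(\widetilde{v})$. Indeed $f_g$ is an isometric embedding on each $\widetilde{\ell}_i$, so it sends each leg to a ray of $T_+$ issuing from $f_g(\widetilde{v})$; and $f_g$ is injective on the set of directions at $\widetilde{v}$ that carry leaves of $\Lam_+$, since two such directions are distinct periodic directions of the derivative map and hence are never identified by any iterate of it, whereas $f_g$ only folds illegal turns. Thus the legs map to rays at $f_g(\widetilde{v})$ that are pairwise either equal or in distinct directions, and $\bigcup_i\ell_i^+ = f_g(\bigcup_i\widetilde{\ell}_i)$ is a star. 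To summarize, the only non-formal ingredient is the combinatorial claim of the second paragraph --- that a chain of pairwise-asymptotic leaves of $\Lam_+$ stays in the orbit of a single principal vertex; given that, the reduction to $\Lam_+$ and the transport by $f_g$ follow from $f_g$ being a $\Lam_+$-isometry and from $L(T_+)\cap L(T_-)=\emptyset$.
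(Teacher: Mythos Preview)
Your route is genuinely different from the paper's, and the comparison is instructive. The paper never touches a train track in this proof. Instead it shows (Lemma~\ref{asymptoticImpliesFixed}, using the bound on the $\cQ$-index and rotationlessness) that for any asymptotic pair $\ell_i,\ell_{i+1}\in\Lambda_+$ there is a principal automorphism $\Phi$ representing some $\phi^p$ whose boundary extension fixes all their endpoints; then, since attracting fixed points of distinct principal automorphisms are disjoint (\cite{hilion_stab}, \cite{handelMosher_axes}), the same $\Phi$ works along the whole chain. Finally, the associated homothety $h_+$ of $T_+$ leaves $S=\bigcup_i\ell_i^+$ invariant (Lemma~\ref{Qandh}), permutes its finitely many branch points, and---being a genuine homothety with stretch $\neq 1$---can have at most one fixed point, so $S$ has a single branch point. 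That last step is the whole punchline and replaces all of your $f_g$ transport and turn/direction bookkeeping.

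Your ``crucial claim'' (all the singular rays along the chain issue from a single principal vertex) is exactly the train-track translation of what the paper proves via principal automorphisms plus the cited disjointness theorem; so you have correctly located the one nontrivial input, but you have not actually supplied an argument for it---``bounded cancellation'' and ``periodic directions of $Dg$'' are gestures, not a proof. The paper's packaging of this step is cleaner and is what you are missing. Your reduction from $L(T_-)$ to $\Lambda_+$ via $L(T_-)=\operatorname{diag}(\Lambda_+)$ is fine (the paper makes the same move implicitly, since $\mathcal E L(T_-)=\mathcal E\Lambda_+$), and your final $f_g$ argument is correct once the crucial claim is granted; but the homothety conclusion is both shorter and avoids the delicate assertions about $f_g$ on periodic directions.
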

  
  To prove this property we will need some facts from the next subsection.

\subsection{Automorphisms, branch points, and homotheties}

  We will use the following facts about the correspondence between a fully irreducible outer automorphism $\phi$ and an automorphism in $\Aut(F_n)$ representing $\phi$. 
  
  \begin{defn}
    Let $R_n$ denote the graph with one vertex $v$ and $n$ edges. Choosing a basis $A$ of $F_n$ and identifying each oriented edge with a distinct element of $A$ identifies $\pi_1(R_n, v)$ with $F_n$. Moreover, each automorphism $\Phi \in \aut$ is represented by a map $f \from R_n \to R_n$ sending the vertex $v$ to itself and an edge of $R_n$ to an immersed edge path so that $f$ is a homotopy equivalence. The correspondence between such self-maps of $R_n$ and elements of $\aut$ is a bijection. Let $G$ be a graph and $\mu \from R_n \to G$ a homotopy equivalence. A homotopy equivalence $f \from G \to G$ gives rise to an outer class $[f_*] \in \out$. If $f(e)$ is an immersed edge path for every edge $e \in E(G)$, then we say that $f$ is a \emph{topological representative} of $[f_*]$. Let $\phi \in \out$, let $f \from G \to G$ be a topological representative of $\phi$, and let $\wt G$ be the universal cover of $G$. The identification, via the homotopy equivalence $\mu$, of $\pi_1(G,*)$ with $F_n$ (up to conjugation) gives rise to an action of $F_n$ on $\wt G$ by deck transformations $\rho \from F_n \to \text{Aut}(\wt G)$. The map $\rho$ is well defined up to precomposing it with $i_g$, where $i_g \in \aut$ denotes conjugation by $g$. 
  \end{defn}  

  \begin{lemma} \label{lemma_correspondence}
    Let $\phi \in \out$ and $f \from G \to G$ be a topological representative of $\phi$. Let $\wt G$ denote the universal cover of $G$ and $\rho \from F_n \to \text{Aut}(\wt G)$ the action of $F_n$ by deck transformations. There is a bijection  between automorphisms $\Phi \in \phi$ and lifts $\wt g \from \wt G \to \wt G$ of $g$ given by the equation:
    \begin{equation}
      \wt g \circ \rho(\gamma) = \rho( \Phi(\gamma) ) \circ \wt g.
    \end{equation}
	  \qed
  \end{lemma}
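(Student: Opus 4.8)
The plan is to run the standard covering space machinery: exhibit both the set of lifts of $f$ and the set of automorphisms in the outer class $\phi=[f_*]$ as principal homogeneous spaces over $F_n$, and then check that the stated intertwining equation identifies these two torsor structures. (Following the statement, I write $\wt f$ for a lift of the topological representative, which I call $f$, consistent with the hypothesis.)

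First I would build the correspondence. Let $\pi\from\wt G\to G$ be the covering projection, fix a basepoint $x_0\in G$ and a lift $\wt x_0\in\pi^{-1}(x_0)$; the deck group $\rho(F_n)$ acts simply transitively on each fiber of $\pi$. Since $\wt G$ is simply connected, every map $\wt G\to G$ lifts through $\pi$, and two lifts agreeing at a single point coincide. For any lift $\wt f$ of $f$ (so $\pi\wt f=f\pi$) and any $\gamma\in F_n$, both $\wt f\circ\rho(\gamma)$ and $\rho(\delta)\circ\wt f$ (for any $\delta\in F_n$) are lifts of the one map $f\circ\pi\from\wt G\to G$, because $\pi\circ\rho(\cdot)=\pi$. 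As $\delta$ varies, $\rho(\delta)\wt f(\wt x_0)$ runs bijectively over $\pi^{-1}(f(x_0))$, so there is a unique $\delta$ with $\wt f\rho(\gamma)=\rho(\delta)\wt f$; define $\Phi_{\wt f}(\gamma):=\delta$. Expanding $\wt f\rho(\gamma\gamma')$ two ways and invoking uniqueness shows $\Phi_{\wt f}$ is a homomorphism $F_n\to F_n$.

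Next I would pin down the torsor structures. Any two lifts of $f$ differ by a deck transformation: if $\pi\wt f'=f\pi=\pi\wt f$ then $\wt f'=\rho(c)\circ\wt f$ for a unique $c\in F_n$, by simple transitivity on fibers plus unique lifting. Substituting $\rho(c)\wt f$ into the defining equation and conjugating gives $\Phi_{\rho(c)\wt f}=i_c\circ\Phi_{\wt f}$, where $i_c$ is conjugation by $c$. Since $F_n$ has trivial center, $c\mapsto\rho(c)\wt f$ is a bijection from $F_n$ onto the set of lifts of $f$, and $c\mapsto i_c\circ\Phi_{\wt f}$ is a bijection from $F_n$ onto $\{i_c\circ\Phi_{\wt f}\mid c\in F_n\}$; the assignment $\wt f'\mapsto\Phi_{\wt f'}$ intertwines these two. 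Hence $\wt f'\mapsto\Phi_{\wt f'}$ is a bijection from the lifts of $f$ onto the outer class of $\Phi_{\wt f}$.

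It remains to identify that outer class with $\phi$ — equivalently, to check $\Phi_{\wt f}$ is a genuine automorphism, not merely an endomorphism — and this is the only step using that $f$ is a homotopy equivalence; I expect it to be the main obstacle. I would fix a path $\tau$ in $G$ from $f(x_0)$ to $x_0$ and let $\widehat{f}_*\from\pi_1(G,x_0)\to\pi_1(G,x_0)$ be the induced map, whose outer class is $\phi=[f_*]$ by the definition of a topological representative. Under the isomorphism $\pi_1(G,x_0)\cong F_n$ determined by $\rho$ and $\wt x_0$, a routine diagram chase in covering space theory shows that for the lift $\wt f$ determined by lifting $\tau$ one has $\Phi_{\wt f}=\widehat{f}_*$; the previous paragraph then sweeps out all of $\phi$ as $\wt f$ varies. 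Because $f$ is a homotopy equivalence, $\widehat{f}_*$ — hence every $\Phi_{\wt f}$ — is an automorphism, which completes the proof. (Alternatively one argues surjectivity directly: $\wt f$ is a quasi-isometry of the tree $\wt G$, so it induces a homeomorphism of $\p F_n$ agreeing with the boundary map of $\Phi_{\wt f}$; this forces $\Phi_{\wt f}$ injective with image of finite index, and a Nielsen–Schreier rank count forces that index to be $1$.)
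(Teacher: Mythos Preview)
Your proof is correct; it is the standard covering-space argument, and there is nothing to compare it against because the paper states this lemma without proof (the \qed marks it as a known fact, presumably folklore or citable from any text treating lifts and deck groups). Your write-up is careful on the one nontrivial point---that $\Phi_{\wt f}$ is an automorphism and lies in the outer class $\phi$---and you correctly handle the basepoint issue via the path $\tau$; you also caught and silently fixed the paper's typo ($\wt g$, $g$ should be $\wt f$, $f$).
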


  \begin{defn}\label{def_principal}
    Let $\phi \in \Out(F_n)$ be fully irreducible, and let $\Phi \in \Aut(F_n)$ so that $\Phi \in \phi$. We say that $\Phi$ is a {\it principal automorphism} if the extension of $\Phi$ to $\p F_n$, which we denote by $\partial \Phi$, fixes at least three nonrepelling points. 
  \end{defn}

  \begin{lemma} \label{lem_prin_aut} \cite{GJLL,handelMosher_axes}
    Let $\phi \in \out$ be fully irreducible, and let $T_+$ be its attracting tree. 
    \begin{enumerate}
      \item If $\Phi$ is a principal automorphism representing $\phi$, then there exists a unique homothety $h_+ \from T_+ \to T_+$ so that $h_+( \gamma x) = \Phi(\gamma) h_+(x)$ for all $\gamma \in F_n$ and $x \in T_+$. This homothety stretches the distances in $T_+$ by the dilatation of $\phi$. 
      \item The homothety $h_+$ from the previous item fixes a branch point $b \in T_+$.
      \item The correspondence $\Phi \mapsto b$ defines a bijection between the set of principal automorphisms representing $\phi$ and the set of branch points of $T_+$. 
      \item For each automorphism $\Phi$ representing $\phi$ there exists a homothety $h_- \from T_- \to T_-$ so that $h_-( \gamma x) = \Phi(\gamma) h_-(x)$ for all $\gamma \in F_n$  and for all $x\in T_-$. 
     \end{enumerate}
  \end{lemma}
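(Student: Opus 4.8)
The plan is to prove Lemma \ref{lem_prin_aut} by assembling the four statements from the correspondence between principal automorphisms and branch points of $T_+$, following \cite{GJLL} and \cite{handelMosher_axes}. The central object is the fact recalled in Lemma \ref{lemma_correspondence}: choosing an affine (or absolute) train track representative $f \from G \to G$ of $\phi$ and the action $\rho \from F_n \to \operatorname{Aut}(\wt G)$ by deck transformations, automorphisms $\Phi$ in the outer class $\phi$ correspond bijectively to lifts $\wt f \from \wt G \to \wt G$ via the twisted-equivariance equation $\wt f \circ \rho(\gamma) = \rho(\Phi(\gamma)) \circ \wt f$. The key point is that there is an $F_n$-equivariant edge-isometry $f_g \from \wt G \to T_+$ (as in the proof of Lemma \ref{endsToBndry}) intertwining $\wt f$ with the desired homothety on $T_+$.

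\textbf{Item (1).} Given a principal automorphism $\Phi$ with lift $\wt f$, I would define $h_+ \from T_+ \to T_+$ by declaring $h_+ \circ f_g = f_g \circ \wt f$ on $\wt G$ and extending by continuity/density: since the action on $T_+$ has dense orbits and $f_g$ has dense image, this determines $h_+$ uniquely, provided it is well-defined. Well-definedness amounts to checking that $f_g$ collapses the same leaves after applying $\wt f$ as before — equivalently that $\wt f$ respects the legal lamination $\Lam_+$, which holds because $f$ is a train track map and $f_g$ is an $\Lam_+$-isometry. The scaling factor: on any legal segment, $f$ stretches lengths of edges by the Perron--Frobenius eigenvalue $\lambda$ (the dilatation of $\phi$), and $f_g$ is an isometry on legal paths, so $h_+$ multiplies the metric on $T_+$ by $\lambda$; uniqueness follows because the equivariance equation $h_+(\gamma x) = \Phi(\gamma) h_+(x)$ together with minimality pins down $h_+$ once its value is known on one orbit.

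\textbf{Items (2) and (3).} Since $h_+$ is a homothety of the $\R$-tree $T_+$ with dilatation $\lambda > 1$, it has a unique fixed point $b \in T_+$ (a contraction-type fixed-point argument on the tree: iterate $h_+^{-1}$, or note $d(h_+ x, x)$ is minimized at a unique point which must be fixed). To see $b$ is a branch point, I would use the hypothesis that $\partial\Phi$ fixes at least three nonrepelling points $\xi_1, \xi_2, \xi_3 \in \partial F_n$: each $\xi_j$ is an attracting or neutral fixed point, so by Lemma \ref{endsToBndry}-type reasoning $\cQ_+(\xi_j) \in \partial T_+$, and the ray from $b$ toward $\cQ_+(\xi_j)$ is $h_+$-invariant; distinct nonrepelling fixed points give distinct $h_+$-invariant directions at $b$, forcing at least three directions, hence $b$ is a branch point. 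Conversely, given a branch point $b$ of $T_+$, the homothety fixing $b$ (each branch point is the fixed point of exactly one homothety in the relevant homothety group, by the structure theory of $\overline{CV}_n$) corresponds via the equivariance equation to a unique automorphism $\Phi$, which one checks is principal by running the direction argument backwards: the $\geq 3$ invariant directions at $b$ yield $\geq 3$ nonrepelling fixed points of $\partial\Phi$. This sets up the bijection in (3); I would cite \cite{GJLL, handelMosher_axes} for the fact that the homothety group acts on branch points with the stated bijection onto principal automorphisms.

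\textbf{Item (4)} is the easiest: for an arbitrary $\Phi$ representing $\phi$, the same edge-isometry construction applied to an affine train track representative of $\phi^{-1}$ (whose attracting tree is $T_-$) produces a homothety $h_- \from T_- \to T_-$ with $h_-(\gamma x) = \Phi(\gamma) h_-(x)$; no principality is needed here, only the existence of the equivariant map and the lift $\wt f$, so this follows from item (1)'s construction without the branch-point analysis. \textbf{The main obstacle} I anticipate is item (2)--(3): carefully establishing that the fixed point $b$ of $h_+$ is a branch point and that the assignment $\Phi \mapsto b$ is a bijection requires the precise correspondence between $h_+$-invariant directions at $b$, nonrepelling $\partial\Phi$-fixed points, and the singular structure of $\Lam_+$ — this is exactly the content developed in \cite[Chapter 3]{handelMosher_axes} and \cite{GJLL}, so I would lean on those references for the combinatorial bookkeeping rather than reprove it, while giving the $\R$-tree fixed-point argument in full.
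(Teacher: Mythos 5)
Your proposal is correct in outline, but it is organized differently from the paper. The paper treats items (1)--(3) purely as citations to \cite{GJLL,handelMosher_axes} and proves only item (4), and it does so by a soft argument: since $\phi$ acts on $\overline{CV}_n$ with North--South dynamics, the projective class of $T_-$ is $\phi$-invariant, so for any $\Phi$ in the class $\phi$ the trees $(T_-,\rho)$ and $(T_-,\rho\circ\Phi)$ are projectively equivalent, and that projective equivalence is precisely the twisted-equivariant homothety $h_-$; no train-track representative is needed. You instead sketch (1)--(3) from the train-track lift picture (deferring the bijection in (3) to the references, which is consistent with how the paper itself handles it) and obtain (4) by running the edge-isometry construction for a representative of $\phi^{-1}$, whose attracting tree is $T_-$. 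Both routes are legitimate; yours makes the construction of $h_{\pm}$ explicit at the cost of invoking a train-track representative of $\phi^{-1}$ and the limiting construction of the equivariant map, while the paper's two-line argument for (4) uses nothing beyond projective invariance of $T_-$. Two small points to tighten if you write yours up: the contraction argument for $h_+$ a priori yields a fixed point only in the metric completion $\overline{T}_+$, and it is the three $h_+$-invariant directions coming from the nonrepelling fixed points of $\partial\Phi$ that force this point to be a branch point, hence to lie in $T_+$ (points of $\overline{T}_+\smallsetminus T_+$ are extremal); and in (4) the construction applied to $\phi^{-1}$ produces a homothety of $T_-$ intertwining with $\Phi^{-1}$, so you must pass to its inverse to obtain $h_-(\gamma x)=\Phi(\gamma)\,h_-(x)$.
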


  \begin{proof}
    Item (4) is the only one that does not appear in \cite{handelMosher_axes}. Since $\phi$ acts on $\overline{CV}_n$ by North-South dynamics, $T_-$ is equivalent to $T_- \cdot\phi$. This means that there exists a homothety $h_- \from T_- \to T_-$ which is $F_n$-equivariant. That is, if $\rho \from F_n \to \text{Aut}(T_-)$ is the $F_n$-action on $T_-$ then,
    \[ h_-( \rho(\gamma) x) = \rho(\Phi(\gamma)) h_-(x).  \]
    Suppressing $\rho$ yields the statement in the lemma.
  \end{proof}

  It will be easier to work with rotationless automorphisms and topological representatives. One can find the definitions of rotationless automorphisms and rotationless train track maps in \cite{fh09} or \cite[pages 24,27]{handelMosher_axes}. Here we will only need the following facts due to Feighn-Handel \cite{fh09}.
  \begin{enumerate}
    \item Each fully irreducible $\phi \in \out$ has a rotationless power $\phi^p$, and $p$ is bounded by a constant depending only on $n$. Moreover, $T_\pm ^\phi = T_\pm ^{\phi^p}$ and $\Lambda_\pm ^\phi = \Lambda_\pm ^{\phi^p}$. 
    \item Let $\phi$ be a fully irreducible automorphism and $f \from G \to G$ a train track representative of $\phi$. Then, $\phi$ is rotationless if and only if $f$ is rotationless.
    \item If $f \from G \to G$ is rotationless, then for each vertex $v$, $f^2(v) = f(v)$. For each directed edge $e$, the germ of $f^2(e)$ equals that of $f(e)$.  
  \end{enumerate} 

  \begin{lemma}\label{asymptoticImpliesFixed}
    Let $\phi \in \out$ be fully irreducible. There exists $\phi^p$, a rotationless power of $\phi$, so that for any asymptotic leaves $\ell_1, \ell_2 \in \Lam_+^\phi$, there exists an automorphism $\Phi \in \Aut(F_n)$ representing $\phi^p$ so that the endpoints of $\ell_1$ and $\ell_2$ are non-repelling fixed points of $\partial\Phi$. 
  \end{lemma}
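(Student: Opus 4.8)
The plan is to move to a rotationless power of $\phi$ equipped with a train track representative, realize the attracting lamination in the universal cover, and then use the Handel--Mosher description of the non-generic leaves of $\Lam_+^\phi$ as bi-infinite concatenations of eigenrays based at a common principal vertex; the principal automorphism attached to that vertex will be the desired $\Phi$.

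First I would fix, using the facts of Feighn--Handel recalled above, a rotationless power $\phi^p$ of $\phi$ (with $p$ bounded in terms of $n$) together with a rotationless train track representative $f\from G\to G$ of $\phi^p$. Since $T_\pm^{\phi^p}=T_\pm^{\phi}$ and $\Lam_\pm^{\phi^p}=\Lam_\pm^{\phi}$, it suffices to prove the statement for $\phi^p$, and ``asymptotic leaves of $\Lam_+^{\phi}$'' may be read as asymptotic leaves of $\Lam_+^{\phi^p}$. Via the marking, identify $\p\wt G$ with $\p F_n$ and realize $\Lam_+:=\Lam_+^{\phi}$ as a set of bi-infinite lines in $\wt G$ (and in $G$). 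Recall the notion of an \emph{eigenray}: a ray $R\from[0,\infty)\to\wt G$ based at a vertex $\wt v$ such that some lift $\wt f$ of $f$ fixes $\wt v$, the initial direction of $R$ is fixed by $D\wt f$, and $R=\lim_n\wt f^{\,n}(e)$ with $e$ the initial edge. Then $\wt f(R)=R$, so $R(\infty)\in\p\wt G=\p F_n$ is a fixed point of $\p\wt f$, and it is non-repelling because $f$ is a train track map and nearby rays are attracted to $R$. By fact~(3) above, for a rotationless $f$ every periodic direction is $Df$-fixed and every periodic vertex is $f$-fixed; hence, given a principal vertex $v$ and a lift $\wt v$, there is a \emph{single} lift $\wt f$ of $f$ fixing $\wt v$ and simultaneously fixing all periodic directions at $\wt v$, so all eigenrays based at $\wt v$ are eigenrays of this one $\wt f$.

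The structural input I need from \cite[Chapter~3]{handelMosher_axes} is the following: if $\ell_1\neq\ell_2$ are leaves of $\Lam_+$ that are asymptotic, then there is a vertex $\wt v$ of $\wt G$ projecting to a principal vertex of $G$, and eigenrays $R, R_1, R_2$ based at $\wt v$, so that $\ell_1=\overline{R}\cdot R_1$ and $\ell_2=\overline{R}\cdot R_2$, with $R$ the shared half-leaf and $R_1\neq R_2$. (Concretely: an asymptotic leaf is a singular leaf, hence a concatenation of two eigenrays at a common fixed vertex; two such leaves that are asymptotic share an eigenray $R$ and, since eigenrays of a fully irreducible automorphism are not eventually periodic, must share the fixed vertex at which it is based, so both eigenrays involved in each of $\ell_1,\ell_2$ are eigenrays of the same lift $\wt f$.) Granting this, let $\Phi\in\Aut(F_n)$ be the automorphism corresponding, via Lemma~\ref{lemma_correspondence}, to the lift $\wt f$ fixing $\wt v$; it represents $\phi^p$. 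Then $\p\Phi=\p\wt f$ fixes $Y:=R(\infty)$, $X_1:=R_1(\infty)$, $X_2:=R_2(\infty)$, and each is non-repelling. Since $R_1\neq R_2$ we get $X_1\neq X_2$, and $X_i\neq Y$ because $\ell_i$ is non-degenerate; thus $\p\Phi$ has at least three distinct non-repelling fixed points, so $\Phi$ is a principal automorphism representing $\phi^p$. As the endpoints of $\ell_1$ are $\{X_1,Y\}$ and those of $\ell_2$ are $\{X_2,Y\}$, this is exactly the assertion of the lemma.

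The main obstacle is the structural input of the third paragraph --- that two distinct asymptotic leaves of $\Lam_+$ are concatenations of eigenrays sharing a common fixed base vertex for one and the same lift of $f$; everything else is bookkeeping with the train track map, and the rotationless hypothesis is precisely what lets a single lift fix all the relevant data at once. One could alternatively argue on the tree side: $\ell_1^+\cup\ell_2^+$ is a star in $T_+$ by Proposition~\ref{fiberIsStar}, its midpoint is a branch point of $T_+$ since $\cQ_+(X_1),\cQ_+(X_2),\cQ_+(Y)$ are distinct, and Lemma~\ref{lem_prin_aut}(2)--(3) attaches a principal automorphism of $\phi^p$ to that branch point --- but verifying that $\cQ_+(X_1),\cQ_+(X_2),\cQ_+(Y)$ are non-repelling fixed points of the associated homothety reduces to the same analysis of periodic directions under a rotationless representative.
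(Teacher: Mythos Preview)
Your main line of argument is correct in spirit but takes a genuinely different route from the paper. The paper does \emph{not} invoke the eigenray description of singular leaves. Instead it argues directly: call $(X,Y,Z)$ a \emph{special triplet} if $(X,Y),(Y,Z)\in\Lam_+^\phi$; by Lemma~\ref{diagLemma} such a triplet maps to a single point under $\cQ_-$, and the $\cQ_-$-index bound of \cite{coulboishilion} forces there to be only finitely many $F_n$-orbits of special triplets. Hence any automorphism $\Psi\in\phi$ eventually carries a special triplet into its own orbit, and passing to a rotationless power (so that germs are fixed after one iterate) yields $m=1$ and $q\in\{0,1\}$ in $\Psi^{m+q}(W)=g\cdot\Psi^q(W)$; conjugating produces the desired $\Phi$. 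This avoids any discussion of eigenrays or principal vertices and uses only the index bound plus the rotationless germ condition. Your approach, by contrast, imports the Handel--Mosher structure theory of singular leaves wholesale; that is legitimate, but it is a heavier external input, and in this paper the identification $\Wh_\phi\cong\Wh_{\Lam_+^\phi}$ is stated as a theorem (Theorem~\ref{thm:Whitehead graph}) deduced \emph{from} this lemma, so one must be sure the cited structural statement is genuinely prior in \cite{handelMosher_axes} and not a repackaging of the same content.

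Two specific cautions. First, your parenthetical justification that two asymptotic singular leaves share a base vertex because ``eigenrays are not eventually periodic'' does not do the work: two eigenrays $R,S$ with $R(\infty)=S(\infty)$ certainly eventually coincide as rays in $\wt G$, but that alone does not force their base vertices to agree. What one actually needs is that $R(\infty)$ is an attracting fixed point of both $\p\wt f_1$ and $\p\wt f_2$, and that attracting fixed points of distinct automorphisms in the same outer class are disjoint (\cite[Theorem~1.1]{hilion_stab} or \cite[Corollary~2.9]{handelMosher_axes}); this forces $\wt f_1=\wt f_2$. The paper itself invokes exactly this disjointness in the proof of Proposition~\ref{fiberIsStar}. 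Second, your proposed alternative ``on the tree side'' via Proposition~\ref{fiberIsStar} is circular here: in this paper Proposition~\ref{fiberIsStar} is proved \emph{using} Lemma~\ref{asymptoticImpliesFixed}.
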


  \begin{proof}
    We call a triplet $(X,Y,Z)$ of distinct points in $\partial F_n$ \emph{special} if $(X,Y), (Y,Z) \in \Lam_+^\phi$. The set of special triplets is $F_n$-invariant. Moreover, if $(X,Y,Z)$ is special, then by Lemma~\ref{diagLemma}, $\cQ_-(X) = \cQ_-(Y) = \cQ_-(Z)$. Since the $\cQ_-$-index is bounded (by $2n-2$) \cite{coulboishilion} (see \cite{coulboishilion} for the definition), there are finitely many $F_n$-orbits of special triplets. Therefore, there exist  $q,m \in \N$ so that for every automorphism $\Psi$ representing $\phi$, and for each special triplet $W = (X,Y,Z)$, the special triplets $\Psi^{m+q}(W)$ and $\Psi^q(W)$ are in the same $F_n$-orbit. 

    Let $\phi^p$ be a rotationless power of $\phi$. Let $f \from G \to G$ be a train-track representative of $\phi^p$.  Identify $\partial \wt G$ with $\partial F_n$, let $f_0 \from \wt G \to \wt G$ be any lift of $f$, and let $\Phi_0$ be the corresponding automorphism given by Lemma~\ref{lemma_correspondence} representing $\phi^p$. By the previous paragraph, there exists $g \in F_n$ so that each special triplet $W=(X,Y,Z)$ satisfies $\partial f_0^{m+q}(W) = g \partial f_0^q(W)$. The map $f$ is rotationless; hence, the image of each germ is fixed by $f$. So, $m=1$.  Moreover, we may assume $q=0$ or $q=1$. If $q=0$, then $\pf (W) = g W$. Let $\Phi = i_{g^{-1}} \circ \Phi_0$, where $i_g$ denotes conjugation by $g$. Then, $\partial \Phi (W) = W = (X,Y,Z)$. The points $X,Y,Z$ are attracting fixed points in this case, since they belong to $\mathcal{E}\Lam_+^\phi$. If $q=1$, then $\pf^2(W) = g \pf(W)$. Thus, $W = \Phi_0^{-2}(g) \partial\Phi_0^{-1}(W)$. Let $\Phi^{-1} = i_w \circ \Phi_0^{-1}$ for $w =  \Phi_0^{-2}(g)$. Then, $W$ is fixed by $\Phi^{-1}$, which represents $\phi^{-p}$. Therefore, $\Phi$ also fixes $W = (X,Y,Z)$, and the points $X,Y,Z$ must be attracting fixed points, since they belong to $\mathcal{E}\Lam_+^\phi$.
  \end{proof} 

  \begin{lemma} \cite[Equation 4.2]{coulboishilion_bot}
    \label{Qandh}
    Let $\Phi$ be an automorphism and $h_\pm \from T_\pm \to T_\pm$ the corresponding homotheties guaranteed by Lemma \ref{lem_prin_aut}. Then, for each $\xi \in \partial F_n$,
    \[ \cQ_\pm( \partial \Phi(\xi)) = h_\pm(\cQ_\pm(\xi)). \]
    \qed
  \end{lemma}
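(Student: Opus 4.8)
The plan is to recognize the two sides of the asserted identity as two presentations of a single $\cQ$-map, and then to invoke the uniqueness of that map. Fix the sign (the argument is identical for $+$ and $-$); let $\rho$ denote the action of $F_n$ on $T:=T_\pm$, and let $h:=h_\pm$ be the homothety of $T$ supplied by Lemma~\ref{lem_prin_aut}, so that
\[ h(\rho(\gamma)x) = \rho(\Phi(\gamma))\,h(x)\qquad\text{for all } \gamma\in F_n,\ x\in T. \]
Thus $h$ is an $F_n$-equivariant homothety from the $F_n$-tree $(T,\rho)$ to the reparametrized tree $(T,\rho\circ\Phi)=T\cdot\phi$.

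I would first set up two soft ingredients. \emph{(i)} Being bi-Lipschitz and carrying segments to segments and directions to directions, the homothety $h$ (and its inverse) extends to a self-homeomorphism of $\hT$, continuous for the observers' topology, which carries $\pT$ onto $\pT$ bijectively; keep the name $h$ for this extension. The displayed relation rearranges to $h^{-1}\bigl(\rho(\Phi(\gamma))\,y\bigr)=\rho(\gamma)\,h^{-1}(y)$ for all $\gamma\in F_n$, $y\in\hT$. Also, $\p\Phi$ is a homeomorphism of $\p F_n$ (an automorphism of $F_n$ is bi-Lipschitz for a fixed word metric) and satisfies $\p\Phi(\gamma\xi)=\Phi(\gamma)\,\p\Phi(\xi)$. \emph{(ii)} The map $\cQ_\pm$ of Theorem~\ref{thm:Q_map} is the \emph{unique} $F_n$-equivariant, observers'-continuous map $\p F_n\to\hT$ that has a single preimage over each point of $\pT$. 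This is essentially the content of the construction in \cite{levittlustig,coulboishilionlustig_ot}; a direct argument: since $T$ has nonzero translation-length function, some $g\in F_n$ acts hyperbolically on $T$ with axis $L_g$ and endpoints $L_g^\pm\in\pT$, and any map $\cF$ as above must send the attracting fixed point $g^+\in\p F_n$ of left-multiplication by $g$ to $L_g^+$ --- pick $\eta\in\p F_n\setminus\{g^\pm\}$ with $\cF(\eta)\notin\{L_g^+,L_g^-\}$ (possible since $\cF$ has at most two preimages over $\{L_g^+,L_g^-\}$) and apply observers'-continuity to $g^k\eta\to g^+$, using $\rho(g)^k\cF(\eta)\to L_g^+$; the points $g^+$ obtained this way from the conjugates of $g$ are dense in $\p F_n$ because the $F_n$-action on $\p F_n$ is minimal, and $\hT$ is Hausdorff, so $\cF$ is determined.

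Next I would check that $\cQ':=h^{-1}\circ\cQ_\pm\circ\p\Phi\from\p F_n\to\hT$ is another such map. It is surjective and observers'-continuous, as a composition of $\p\Phi$ (a homeomorphism of $\p F_n$), $\cQ_\pm$ (observers'-continuous and surjective), and $h^{-1}$ (an observers'-homeomorphism of $\hT$); it has a single preimage over each point of $\pT$ because $h^{-1}$ carries $\pT$ bijectively to $\pT$, $\cQ_\pm$ has that property, and $\p\Phi$ is a bijection; and it is $F_n$-equivariant, since unwinding the definition and using $\p\Phi(\gamma\xi)=\Phi(\gamma)\p\Phi(\xi)$, the equivariance of $\cQ_\pm$, and ingredient \emph{(i)} gives
\[ \cQ'(\gamma\xi) \;=\; h^{-1}\bigl(\rho(\Phi(\gamma))\,\cQ_\pm(\p\Phi(\xi))\bigr) \;=\; \rho(\gamma)\,h^{-1}\bigl(\cQ_\pm(\p\Phi(\xi))\bigr) \;=\; \rho(\gamma)\,\cQ'(\xi). \]
By \emph{(ii)}, $\cQ'=\cQ_\pm$; that is, $h^{-1}\bigl(\cQ_\pm(\p\Phi(\xi))\bigr)=\cQ_\pm(\xi)$ for all $\xi\in\p F_n$. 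Applying $h$ yields $\cQ_\pm(\p\Phi(\xi))=h(\cQ_\pm(\xi))=h_\pm(\cQ_\pm(\xi))$, as claimed.

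The only step with real content is ingredient \emph{(ii)}, the uniqueness of the $\cQ$-map among equivariant observers'-continuous maps injective over $\pT$; if one does not wish to reprove it, it may be quoted from \cite{levittlustig,coulboishilionlustig_ot}, and otherwise the density argument above supplies it using only that the action of $F_n$ on $T$ is minimal, very small, and has dense orbits. Everything else --- that a homothety extends to $\hT$, that $\p\Phi$ extends to a boundary homeomorphism, and that $\cQ'$ is equivariant, continuous, surjective, and injective over $\pT$ --- is routine bookkeeping.
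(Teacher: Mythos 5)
Your argument is correct, and it is worth noting that the paper itself gives no proof of this lemma: it is quoted from Coulbois--Hilion (their Equation~4.2), where the identity essentially falls out of the explicit construction of the map $\cQ$ (via limits $\cQ(\xi)=\lim u_iP$ and bounded back-tracking) together with the intertwining relation for $h_\pm$. Your route is genuinely different and self-contained: you characterize $\cQ_\pm$ abstractly as the unique $F_n$-equivariant, observers'-continuous map $\p F_n\to\hT_\pm$ with a single preimage over each point of $\pT_\pm$, and then observe that $h_\pm^{-1}\circ\cQ_\pm\circ\p\Phi$ satisfies the same properties. The uniqueness step is the only one with content, and your sketch of it is sound: for a hyperbolic $g$, the dynamics of $\rho(g)$ on $\hT$ in the observers' topology (every point other than $L_g^-$ is attracted to $L_g^+$, since the subbasic open sets are directions) forces any candidate map to send $g^+$ to $L_g^+$, the orbit of $g^+$ is dense by minimality of the boundary action, and $\hT$ is Hausdorff (it is a dendrite), so two such maps agree. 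Two small points you gloss over but which are indeed routine: $h_\pm$ is surjective (its image is an $F_n$-invariant subtree, so minimality gives $h_\pm(T_\pm)=T_\pm$), which is what licenses using $h_\pm^{-1}$; and the extension of $h_\pm$ to $\hT_\pm$ is an observers'-homeomorphism because a bijective homothety carries directions to directions. What your approach buys is independence from the internal details of the construction of $\cQ$; what it costs is having to prove (or carefully cite) the uniqueness statement, which is not stated verbatim in Theorem~\ref{thm:Q_map}.
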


  \begin{proof}[Proof of Proposition \ref{fiberIsStar}]
     By Lemma \ref{asymptoticImpliesFixed} there exists a principal automorphism $\Phi$ representing $\phi^p$ for some $p \in \mathbb{N}$ so that the endpoints of $\ell_1$ and $\ell_2$ are attracting fixed points of $\partial\Phi$. Similarly, since $\ell_2$ and $\ell_3$ are asymptotic, there exists an automorphism $\Phi'$ representing $\phi^p$ so that the endpoints of $\ell_2$ and $\ell_3$ are attracting fixed points of $\partial\Phi'$.
     Attracting fixed points of two automorphisms in the same outer class are disjoint \cite [Theorem 1.1]{hilion_stab} (see alternatively \cite[Corollary 2.9]{handelMosher_axes} for two principal automorphisms), so $\Phi = \Phi'$. Continuing in this fashion proves that the end-points of $\ell_1, \dots , \ell_k$ are attracting fixed points of $\partial \Phi$. By Lemma \ref{lem_prin_aut}(1), there exists a homothety $h \from T_+ \to T_+$ representing $\Phi$. Let $S \subset T_+$ be the union of the realizations of $\{\ell_i\}_{i=1}^k$. Then, $S$ is invariant by $h$ by Lemma~\ref{Qandh}. Moreover, $S$ has finitely many vertices, and, $h$ permutes the finitely many vertices of $S$. Since $h$ is a homothety, $S$ contains only one vertex.  
      \end{proof}

\subsection{Proof of Theorem~\ref{thm:Whitehead graph}} \label{sec_wh_graphs_iso}

     Let $\Wh_{\phi}$ be the $F_n$-quotient of the graph whose vertex set is the union of nonrepelling fixed points in $\p F_n$ of principal automorphisms representing $\phi$; see Definition~\ref{def_principal}. An edge of $\Wh_{\phi}$ corresponds to a leaf of the attracting lamination $\Lam_+^\phi$ of $\phi$. Let $\Wh_{\Lam_+^\phi}$ be the $F_n$-quotient of the graph whose vertex set is the union of endpoints of singular leaves of $\Lam_+^\phi$, where a singular leaf has an asymptotic class containing more than one element. An edge of $\Wh_{\Lam_+^\phi}$ corresponds to a singular leaf of $\Lam_+^\phi$. By Lemma \ref{asymptoticImpliesFixed}, $Wh_{\phi} \cong \Wh_{\Lam_+^\phi}$, proving Theorem~\ref{thm:Whitehead graph}.

\subsection{System of partial isometries associated to a free group automorphism} \label{subsec:trees_Q}
   
    Coulbois--Hilion--Lustig in \cite{coulboishilionlustig} use the $\cQ$-map to construct a system of partial isometries for an $\R$-tree $T$ with a very small, minimal action of $F_n$ by isometries and dense orbits. 
    
    \begin{construction} \label{const:system}  \cite{coulboishilionlustig}.
     Let $T$ be a very small, minimal $F_n$-tree with dense orbits, let $A$ be a basis of $F_n$, and let $C_A \subset \p^2 F_n$ be its unit cylinder. Let    
     \[\Omega_A := \cQ(L(T) \cap C_A) \subset \oT.\]
     The {\it compact heart $K_A$ of $T$ relative to $A$} is the convex hull of $\Omega_A$ in $\oT$. By \cite{coulboishilionlustig}, $K_A$ is indeed a compact subtree of $\oT$. Let $S = (K_A, A)$   be the system of partial isometries so that for each $a \in A$, the partial isometry associated to $a$ is the maximal restriction of $a^{-1}$ to $K_A$. As defined in Section \ref{subsec:systems}, let $\cB$ be the associated band complex of $S$, and let $T_{\cB}$ denote the $F_n$-tree dual to $\cB$. 
    \end{construction}
    
    \begin{remark}\label{translatingSysPartialIsom}
    If $K_A$ is the compact heart of $T$ relative to the basis $A$ of $F_n$, then $gK_A$ is the compact heart of $T$ relative to the basis $gAg^{-1}$ of $F_n$. Moreover, the unit cylinder satisfies $C_{gAg^{-1}} = gC_A$ and $\Omega_{gAg^{-1}} = g \Omega_A$. We leave the verification of these facts to the reader. 
        \end{remark}

    \begin{thm} \cite{coulboishilionlustig}
    \label{compactHeart}
    Let $T$ be a minimal, very small, $F_n$-tree with dense orbits, and let $A, C_A, \cB,$ and $T_\cB$ be defined as in Construction \ref{const:system}. 
     \begin{enumerate}
      \item The tree $T$ is equal to the minimal subtree of the tree $T_\cB$.      
      \item $L(T) \cap C_A = L(\cB)$, where $L(T) \cap C_A$ and $L(\cB)$ are identified with bi-infinite reduced words in $A\cup A^{-1}$ as in the above construction. 
		 \qed
	     \end{enumerate}
    \end{thm}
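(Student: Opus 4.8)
The plan is to exploit the single structural feature underlying Construction~\ref{const:system}: each partial isometry of $S=(K_A,A)$ is the maximal restriction to $K_A$ of the isometric $F_n$-action on $\overline{T}$, and the map $\cQ$ of Theorem~\ref{thm:Q_map} is $F_n$-equivariant. Consequently, following an admissible leaf path in the band complex $\cB$ and recording the point it visits in $K_A$ amounts, through $\cQ$, to reading off the $\cQ$-images of the successive tails of a reduced word in $A\cup A^{-1}$. I would prove part~(2) this way first, and then deduce part~(1) by comparing dual laminations. Throughout, the identification of $L(T)\cap C_A$ (and of $L(\cB)$) with reduced bi-infinite words $\cdots z_{-2}z_{-1}.z_0z_1\cdots$ works exactly because reducedness of such a word is the condition $z_0\ne z_{-1}^{-1}$, i.e.\ $X_1\ne Y_1$ for the associated pair $(X,Y)=(z_0z_1\cdots,\ z_{-1}^{-1}z_{-2}^{-1}\cdots)$.

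The first ingredient is an elementary \emph{shift-invariance} property of $L(T)\cap C_A$: if $(X,Y)\in L(T)\cap C_A$ with $X=z_0z_1z_2\cdots$, then for every $k\ge1$ the pair $\bigl((z_0\cdots z_{k-1})^{-1}X,\ (z_0\cdots z_{k-1})^{-1}Y\bigr)$ again lies in $L(T)\cap C_A$. Membership in $L(T)$ is immediate from $F_n$-invariance of the dual lamination, and membership in $C_A$ holds because $X$ is reduced, so the two shifted rays begin with the distinct letters $z_k$ and $z_{k-1}^{-1}$. By $F_n$-equivariance of $\cQ$ this gives $\cQ(z_kz_{k+1}\cdots)=(z_0\cdots z_{k-1})^{-1}\cQ(X)\in\cQ(L(T)\cap C_A)=\Omega_A\subseteq K_A$, and symmetrically for the tails of $Y$. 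Granting this, the inclusion $L(T)\cap C_A\subseteq L(\cB)$ is direct: for $(X,Y)\in L(T)\cap C_A$ put $x:=\cQ(X)=\cQ(Y)\in\Omega_A\subseteq K_A$ and form the associated reduced bi-infinite word. Since the partial isometry attached to a letter $z\in A\cup A^{-1}$ is the restriction of the isometry $z^{-1}$ of $\overline{T}$ to $K_A$, the point reached after traversing the first $k$ bands of the forward half-leaf based at $x$ is $(z_0\cdots z_{k-1})^{-1}x=\cQ(z_kz_{k+1}\cdots)$, and after $k$ bands of the backward half-leaf it is $(z_{-k}\cdots z_{-1})x=\cQ\bigl(z_{-k-1}^{-1}z_{-k-2}^{-1}\cdots\bigr)$; by shift invariance all of these lie in $K_A$, so $x$ lies in the domain of every finite subpath. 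Hence the bi-infinite word is an admissible reduced leaf path through $x$; it belongs to $L(\cB)$ and $x$ lies in the limit set $\Omega$.

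For the reverse inclusion $L(\cB)\subseteq L(T)\cap C_A$, start from a bi-infinite admissible reduced leaf path in $\cB$ through $x\in\Omega\subseteq K_A$ and read off the pair $(X,Y)\in C_A$. Admissibility of the forward and backward half-leaves says precisely that $(z_0\cdots z_{k-1})^{-1}x\in K_A$ and $(z_{-k}\cdots z_{-1})x\in K_A$ for all $k$, i.e.\ $x\in z_0\cdots z_{k-1}K_A$ and $x\in z_{-1}^{-1}\cdots z_{-k}^{-1}K_A$ for all $k$. One then wants to conclude that $x=\cQ(X)$ and $x=\cQ(Y)$, so that $(X,Y)\in L(T)\cap C_A$. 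This is where I expect the genuine work to be: one must show that the translates $z_0\cdots z_{k-1}K_A$ of the \emph{compact} heart $K_A$, which all contain $x$, eventually lie inside every observers'-neighborhood of $\cQ(X)$, so that $x=\cQ(X)$ by the Hausdorff property of the observers' topology on $\widehat{T}$. The inputs are the compactness of $K_A$, the description of $\cQ(X)$ as the limit in the observers' topology of $z_0\cdots z_{k-1}\cdot p$ together with the continuity of $\cQ$, and the fact from Theorem~\ref{thm:Q_map} that only points of $\overline{T}$ — not of $\partial T$ — can have two $\cQ$-preimages (which is why $\cQ(X)$ cannot escape to $\partial T$). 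This is precisely the point at which the observers' topology, rather than the metric topology, is essential.

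Finally, for part~(1): under the identification of leaves with bi-infinite words one has $L(\cB)=L(T_\cB)\cap C_A$, this being how the dual lamination of a band complex is read off from its admissible leaf paths. Combining with part~(2) gives $L(T_\cB)\cap C_A=L(T)\cap C_A$; since the dual laminations of $T_\cB$ and of $T$ are both $F_n$-invariant and the cylinders $gC_A=C_{gAg^{-1}}$ of Remark~\ref{translatingSysPartialIsom} cover $\partial^2 F_n$, this forces $L(T_\cB)=L(T)$. The minimal subtree of $T_\cB$ is then a minimal, very small $F_n$-tree with dense orbits carrying the same dual lamination as $T$, and one recovers $T$ from this data up to $F_n$-equivariant isometry — concretely via the natural $F_n$-equivariant isometric embedding $K_A\hookrightarrow T_\cB$, whose $F_n$-translates assemble back to $T$ — which is the assertion of part~(1). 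The only point needing care here is checking that the minimal subtree of $T_\cB$ does have dense orbits and is very small; both are inherited from the fact that $\cB$ was built from a dense-orbit very small $F_n$-tree.
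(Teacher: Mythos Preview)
The paper does not prove this theorem at all: it is stated with a citation to \cite{coulboishilionlustig} and a \qed, and is used as a black box. So there is no ``paper's own proof'' to compare against; what you have written is a sketch of how the result is established in the cited reference.

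Your outline for part~(2) is essentially the right shape. The forward inclusion $L(T)\cap C_A\subseteq L(\cB)$ via shift-invariance and $F_n$-equivariance of $\cQ$ is exactly how it goes. You correctly flag the reverse inclusion as the substantive step: one must show that $\bigcap_{k}z_0\cdots z_{k-1}K_A=\{\cQ(X)\}$ in $\widehat T$. Your ingredients (compactness of $K_A$, observers' topology, the description of $\cQ(X)$ as a limit) are the right ones, though the argument in \cite{coulboishilionlustig} is more delicate than your sketch suggests; in particular one needs that for a compact subtree the translates $g_kK_A$ eventually enter any given direction containing $\cQ(X)$, which uses connectedness of $K_A$ together with a uniform convergence statement, not merely pointwise convergence $g_kp\to\cQ(X)$.

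For part~(1) your primary route has a circularity problem. You invoke ``$L(\cB)=L(T_\cB)\cap C_A$'', but $L(T_\cB)$ is only defined (via a $\cQ$-map) once you know $T_\cB$ is minimal, very small, with dense orbits, and the reconstruction of a tree from its dual lamination is itself a theorem of \cite{coulboishilionlustig} at the same depth as the one you are proving. The cleaner argument, which you allude to parenthetically, is direct: the inclusion $K_A\hookrightarrow\overline T$ and the fact that each partial isometry is a restriction of the $F_n$-action yield an $F_n$-equivariant map $\widetilde\cB\to\overline T$ constant on leaves, hence a map $T_\cB\to\overline T$; one then checks this is an isometry onto its image and that the image contains (hence equals, by minimality) $T$. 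I would make that the main argument and drop the dual-lamination detour.
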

    
  \begin{remark}
    When we apply Construction \ref{const:system} and Theorem \ref{compactHeart} to the attracting or repelling trees $T_+$ or $T_-$ of a fully irreducible outer automorphism, we denote each object with the appropriate subscript; we write $S_+, \Omega_+, \ldots$, and so on. 
  \end{remark}   

\section{Rips Induction and overlapping bands} \label{subsec:rips_class}

\subsection{Rips Induction}

  \begin{defn}
    Let $S = (K,A)$ be a system of partial isometries. The {\it output of the Rips Machine applied to $S$} is a new system of partial isometries $S' = (K', A')$ defined as follows. 
    \[ K':= \{ x \in K \, | \, x \in \dom(a) \cap \dom(a') \text{ for some } a \neq a' \in A \cup A^{-1}  \}.  \]
    Since $A$ is finite and the intersection of two domains is a compact $\R$-tree, $F'$ is a compact forest. Let $A'$ be the set of all maximal restrictions of the elements of $A$ to pairs of components of $F'$. Then $S' = (F', A')$ is a system of partial isometries. 
  \end{defn}

  \begin{defn} \cite[Definition 3.11]{coulboishilion}
    Let $S_0 = (K_0, A_0)$ be a system of partial isometries, and let $S_1 = (K_1, A_1)$ denote the output of the Rips Machine. The system of partial isometries $S_0$ is {\it reduced} if for any partial isometry $a \in A_0^{\pm}$, the set of extremal points of the domain of $a$ is contained in $K_1$. 
   \end{defn}

  \begin{prop} \label{prop_reduced} \cite[Propositions 5.6, 3.14]{coulboishilion}
    Let $\phi \in \Out(F_n)$ be a fully irreducible outer automorphism,  and let $S_+ = (K_A^+, A)$ and $S_- = (K_A^-,A)$ be the systems of partial isometries defined in Construction \ref{const:system}. Then $S_+$ and $S_-$ are reduced, as is any output of $S_+$ or $S_-$ under the Rips Machine.    \qed
  \end{prop}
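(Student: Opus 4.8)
The plan is to deduce both assertions from two facts established by Coulbois--Hilion--Lustig: first, that the compact heart $K_A$ of Construction~\ref{const:system} is the convex hull of $\Omega_A = \cQ(L(T)\cap C_A)$ and, in fact, $\Omega_A$ is precisely the limit set of the system $S=(K_A,A)$ (the set of points lying on a bi-infinite admissible reduced leaf of $\cB$); and second, the identification $L(T)\cap C_A = L(\cB)$ of Theorem~\ref{compactHeart}, under which a leaf $(X,Y)\in L(T)\cap C_A$ corresponds to a bi-infinite admissible reduced word in $A\cup A^{-1}$ read by a leaf of $\cB$ through the point $\cQ(X)=\cQ(Y)\in K_A$. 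Throughout, $T$ denotes $T_+$ or $T_-$; by Lemma~\ref{lemma_iwipindecomposable} the action of $F_n$ on $T$ is indecomposable, in particular minimal, very small, and with dense orbits, so Construction~\ref{const:system} applies, and $\Omega_A$ is a compact subset of $\oT$ (use Lemma~\ref{lemma_ca_compact} and continuity of $\cQ$, together with compactness of $K_A$).

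The first step is to show $S=(K_A,A)$ is reduced. For $b\in A\cup A^{-1}$ the associated partial isometry has domain $\dom(b)=K_A\cap b\cdot K_A$, a subtree of $K_A$, and $K_1$ is the set of points of $K_A$ lying in the domains of two distinct elements of $A\cup A^{-1}$. An extremal point $p$ of $\dom(b)$ falls into one of a few local types: it is extremal in $K_A$, or extremal in $b\cdot K_A$ (i.e. $b^{-1}(p)$ is extremal in $K_A$), or a ``corner'' point interior to both $K_A$ and $b\cdot K_A$ but with a single common direction. If $p$ is extremal in $K_A$, then since $K_A=\mathrm{Conv}(\Omega_A)$ with $\Omega_A$ compact we get $p\in\Omega_A$, so $p=\cQ(X)=\cQ(Y)$ for a leaf $(X,Y)\in L(T)\cap C_A=L(\cB)$; as $X$ and $Y$ begin with distinct letters of $A\cup A^{-1}$ and are read by admissible half-leaves of $\cB$ based at $p$, the point $p$ lies in the domains of two distinct partial isometries, hence $p\in K_1$. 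The remaining types reduce to this one: if $b^{-1}(p)=q$ is extremal in $K_A$, one prepends the band $b$ to the bi-infinite leaf through $q$ (legitimate since $p\in\dom(b)$), and since the two half-leaves at $q$ start with distinct letters at most one of them backtracks $b$, which together with the local structure of $K_A$ at $p$ exhibits $p$ in a second domain; the corner case is handled by the same bookkeeping applied to the branch structure of $K_A$ near $p$. Hence every extremal point of every domain of $S$ lies in $K_1$, so $S$ is reduced; the argument is identical for $S_+$ and $S_-$.

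For the second assertion I would induct on the number of Rips steps. The key input is that the Rips machine, applied to a reduced system $S_i=(K_i,A_i)$, preserves the limit set, $\Omega(S_{i+1})=\Omega(S_i)=\Omega_A$, and carries each leaf of the band complex $\cB_i$ to (the restriction of) a leaf of $\cB_{i+1}$; this is where reducedness of the input is used. Granting this, one runs the dichotomy of the previous paragraph inside each component of the forest $K_{i+1}$: an extremal point of a domain of a partial isometry of $A_{i+1}$ is either extremal in its component of $K_{i+1}$ — hence in $\Omega_A$, hence on a leaf of $\cB_{i+1}$ that traverses two distinct bands of $A_{i+1}$ at that point (reducedness of the leaf) — or it is the image of such a point under a partial isometry or a corner point, handled exactly as before since the partial isometries of $A_{i+1}$ are restrictions of those of $A_i$ and the leaf structure is inherited. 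Therefore $S_{i+1}$ is reduced, and the induction goes through, covering every output of $S_+$ or $S_-$ under the Rips machine.

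The main obstacle is the local analysis at the endpoints of the domains $K_A\cap b\cdot K_A$ that are not endpoints of $K_A$ itself — the prepending and corner cases — which is exactly the point at which one is forced to work inside the band complex $\cB$ and its leaves rather than in the tree alone, tracking admissibility so that attaching a band to a leaf never forces an immediate backtrack. The second delicate point, needed only for the inductive step, is the preservation of the limit set under a single Rips move: this fails for non-reduced systems, since a Rips move on such a system can delete a subarc of some domain that carries leaves, so the hypothesis that the \emph{input} is reduced is genuinely essential here.
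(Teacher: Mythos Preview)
The paper does not prove this proposition: it is quoted with a citation to \cite[Propositions~5.6 and~3.14]{coulboishilion} and closed with an end-of-proof marker immediately after the statement, so there is nothing in the paper itself to compare your argument against.

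As a standalone reconstruction, your sketch has the right architecture in the easy case---extremal points of $K_A$ lie in $\Omega_A$ since $K_A$ is the convex hull of the compact set $\Omega_A$, and points of $\Omega_A$ carry bi-infinite reduced leaves of $\cB$, hence lie in two distinct domains---but the reduction of the remaining cases to this one is not actually carried out. For an extremal point $p$ of $\dom(b)=K_A\cap b\cdot K_A$ that is \emph{not} extremal in $K_A$, the phrases ``together with the local structure of $K_A$ at $p$ exhibits $p$ in a second domain'' and ``handled by the same bookkeeping'' are placeholders, not arguments. The corner case in particular is real: at a branch point of $\oT$ the subtrees $K_A$ and $b\cdot K_A$ can each contain two or more directions while sharing only one, making $p$ extremal in the intersection but in neither factor, and nothing you have written addresses why such a $p$ must lie in $K_1$. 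The inductive step has a parallel gap: you correctly note that a Rips move on a reduced system preserves the limit set, but then assert that extremal points of components of $K_{i+1}$ lie in $\Omega_A$; since those components are no longer convex hulls of $\Omega_A$ in any evident sense, the base-case argument does not transfer automatically and you would need to argue this separately (for instance via the characterization of $K_{i+1}$ as the set of points with at least two $S_i$-half-leaves of length $\geq 1$).
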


  \begin{defn}
    Let $S_0 = (K_0, A_0)$ be a system of partial isometries. Let $S_i= (K_i, A_i)$ denote the output of the $i^{th}$ iteration of the Rips Machine. If for some $i$, $K_i = K_{i+1}$, then the Rips Machine {\it halts} on $S_i$, and the Rips Machine {\it eventually halts} on $S_0$. 
  \end{defn}

  \begin{defn} \label{def_levitt_type}
    Let $S_0$ be a system of partial isometries. If the Rips Machine eventually halts on $S_0$, then $S_0$ is called {\it surface type}. If the Rips Machine does not eventually halt on $S_0$ and $$\lim_{i\rightarrow \infty}\max_{a \in A_i} \diam a = 0,$$ then $S_0$ is called {\it of Levitt type.} 
  \end{defn}
       
    The definition of `Levitt type' given here is equivalent to the original one in \cite[Section 5]{coulboishilion_bot}, which states that the limit set $\Omega$, see Definition~\ref{def_systems}, is totally disconnected.
       
  \begin{notation}
    Let $\phi \in \Out(F_n)$ be a fully irreducible outer automorphism, and let $S_+$ and $S_-$ be the systems of partial isometries defined in Construction \ref{const:system}. If $S_+$, respectively $S_-$, is of Levitt type, we say $T_+$, respectively $T_-$, is {\it of Levitt type.} 
  \end{notation}       

  The following proposition is a direct consequence of a result of Coulbois--Hilion \cite{coulboishilion_bot}.

  \begin{prop} \cite[Theorem 5.2]{coulboishilion_bot} \label{prop:Levitt}
    Let $\phi \in \Out(F_n)$ be a fully irreducible atoroidal outer automorphism.  Then either both $T_+$ and $T_-$ are of Levitt type, or one of the trees $T_+$ and $T_-$ is of Levitt type and the other one is of surface type.
        \qed
  \end{prop}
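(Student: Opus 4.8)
The plan is to obtain the proposition immediately from \cite[Theorem 5.2]{coulboishilion_bot} once two standard facts are in place. The first is the dichotomy built into Definition~\ref{def_levitt_type}: for the \emph{reduced} systems of partial isometries $S_+$ and $S_-$ associated in Construction~\ref{const:system} to the attracting and repelling trees of a fully irreducible $\phi$ (reducedness being Proposition~\ref{prop_reduced}), each of $S_+$ and $S_-$ is of surface type or of Levitt type, with no third possibility. Concretely, if the Rips Machine does not eventually halt on, say, $S_+$, then for an indecomposable tree the limit set $\Omega_+$ is totally disconnected, equivalently $\lim_{i\to\infty}\max_{a\in A_i}\diam a = 0$, so $S_+$ is of Levitt type; this is part of the botany of indecomposable trees in \cite{coulboishilion_bot}. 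Granting this, the proposition is equivalent to the single assertion that $T_+$ and $T_-$ are not \emph{both} of surface type.

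The second fact is that a fully irreducible atoroidal outer automorphism is not geometric: if $\phi$ were induced, via a marking identifying $\pi_1(\Sigma)$ with $F_n$, by a pseudo-Anosov homeomorphism of a compact surface $\Sigma$, then $\Sigma$ would have non-empty boundary (its fundamental group being free), and the conjugacy classes of the boundary curves would be permuted by $\phi$, hence each fixed by a power of $\phi$, contradicting atoroidality. It then remains to quote \cite[Theorem 5.2]{coulboishilion_bot}, which I would use in the form: for a fully irreducible $\phi$, both $T_+$ and $T_-$ are of surface type if and only if $\phi$ is geometric. (If the reference is phrased in terms of \emph{geometric trees} rather than surface-type systems, I would first record that for the attracting and repelling trees of an iwip the two notions agree.) Combining these, $\phi$ atoroidal forces at least one of $S_+,S_-$ to fail to halt under the Rips Machine, hence to be of Levitt type, and with the first-paragraph dichotomy this yields exactly the two alternatives in the statement.

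I do not expect a genuine obstacle; the mathematical content is entirely inside \cite{coulboishilion_bot}. The only points needing care are bookkeeping: matching the ``surface type / Levitt type'' vocabulary of Definition~\ref{def_levitt_type} with the formulation used in the reference, and isolating exactly where the atoroidal hypothesis enters (solely to exclude the geometric case, in which $T_+$ and $T_-$ are both of surface type). The mildly delicate ingredient is the claim used in the first paragraph, that for $S_\pm$ there is nothing strictly between ``the Rips Machine halts'' and ``the partial-isometry diameters tend to $0$''; but this is precisely what makes ``Levitt type'' a well-posed alternative for these systems, and it is already subsumed in the cited theorem.
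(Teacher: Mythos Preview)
Your proposal is correct and essentially matches the paper's approach: the paper does not give a proof at all---it simply cites \cite[Theorem~5.2]{coulboishilion_bot} and places a \qed\ after the statement. Your write-up is a faithful unpacking of why that citation suffices, correctly isolating where atoroidality enters (to rule out the geometric case) and why the surface/Levitt dichotomy is exhaustive for these indecomposable trees.
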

      
  The proof of our Main Theorem uses the fact that at least one of $T_+$ and $T_-$ is of Levitt type.

 \subsection{Volume of an $\R$-tree and overlapping domains}
 
  The main aim of this section is to prove Proposition \ref{prop:levitt implies overlap}, which is a converse to \cite[Proposition 4.3]{coulboishilion}, and states that if $T$ is an $F_n$-tree with dense orbits and of Levitt type, then the associated Rips machine (beginning from the compact heart) has the property that at each step there are three overlapping bands.
      
  \begin{defn}\label{def:volume}
     A compact $\R$-tree $K$ is {\em finite} if $K$ has a finite number of extremal points. In this case, $K$ has also a finite number of branch points. Removing these branch points from $K$ yields to a finite set of arcs; the {\em volume} $\vol(K)$ of $K$ is the sum of the lengths of these arcs. The volume $\vol(K)$ of a compact $\R$-tree $K$ is the supremum of the volume of the finite subtrees contained in $K$. A {\em compact forest} $K$ is a finite disjoint union of compact trees $K_1,\dots,K_p$; its {\em volume} is $\vol(K)=\sum_{1\leq i \leq p}\vol(K_i)$.
 \end{defn}
 
    This volume of an $\R$-tree may be finite or infinite. In either case, the following proposition applies. 

  \begin{prop}\label{prop:compact tree}
   Let $T$ be a compact $\R$-tree.
   Let $\epsilon>0$ and 
   let $\mathcal E$ be a set of disjoint arcs in $T$ of length $\epsilon$.
   Then $\mathcal E$ is finite.
  \end{prop}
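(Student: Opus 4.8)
The plan is to argue by contradiction, extracting from an infinite family of disjoint $\epsilon$-arcs a sequence of points that cannot live in a compact metric space. Suppose $\mathcal{E} = \{I_j \mid j \in \N\}$ is an infinite set of pairwise disjoint arcs in $T$, each of length $\epsilon$. For each $j$ pick the midpoint $m_j$ of $I_j$, so that $I_j$ contains the closed ball $B(m_j, \epsilon/2)$ intersected with the convex hull, i.e.\ the two half-subarcs of length $\epsilon/2$ emanating from $m_j$. Since $T$ is compact, the sequence $(m_j)$ has a convergent subsequence; after passing to it, assume $m_j \to m \in T$. The key geometric point is that in an $\R$-tree, if two points $x, y$ are close, then the arcs $[x, x']$ and $[y, y']$ of a fixed length $\epsilon/2$ must overlap substantially unless they branch off near $x$; I want to leverage this to force two of the $I_j$ to intersect.

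The main step is the following observation about $\R$-trees: fix the limit point $m$. For each $j$, the arc $I_j$ has midpoint $m_j$ with $d(m_j, m) \to 0$. Consider the projection (nearest-point map) $\pi_j$ of $m$ onto the arc $I_j$; since $I_j$ is a closed convex subset of the $\R$-tree $T$, this projection is a well-defined single point $p_j \in I_j$, and for any $x \in I_j$ we have $[m, x] = [m, p_j] \cup [p_j, x]$. In particular $d(m, x) = d(m, p_j) + d(p_j, x)$ for all $x \in I_j$, so $d(m, m_j) = d(m, p_j) + d(p_j, m_j) \geq d(p_j, m_j)$, giving $d(p_j, m_j) \le d(m,m_j) \to 0$, hence $d(p_j, m) \to 0$ as well. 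Now $p_j$ divides $I_j$ into two subarcs whose lengths sum to $\epsilon$, so one of them, call it $J_j \subset I_j$, has length at least $\epsilon/2$; it is an arc starting at $p_j$. The geodesic from $m$ into $J_j$ travels $[m, p_j]$ and then enters $J_j$. Two such arcs $J_j, J_k$: the geodesic $[m, p_j]$ and $[m, p_k]$ share an initial segment (both start at $m$), and since $d(p_j, m), d(p_k, m) \to 0$, for $j, k$ large the arcs $J_j$ and $J_k$ both emanate from points within distance, say, $\epsilon/8$ of $m$. If the directions at $m$ into which $J_j$ and $J_k$ eventually go are the same germ, then one of $J_j, J_k$ contains an initial subarc of the other (since in an $\R$-tree two geodesics with a common direction agree on an initial segment), forcing $I_j \cap I_k \ne \emptyset$, a contradiction. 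So all the $J_j$ (for $j$ large) must head off from $m$ in \emph{distinct} germ-directions — but then we have infinitely many disjoint subarcs $[m', J_j]$ of length $\ge \epsilon/2 - \epsilon/8$ in arbitrarily small balls around $m$, which is still not immediately absurd, so I refine: reparametrize so that each $J_j$, after the initial segment into $p_j$, contains a full subarc of length $\epsilon/4$ \emph{at distance $\ge \epsilon/4$ from $m$}; compactness of $T$ applied to the endpoints of these far subarcs yields a convergent subsequence, and the limit endpoint $q$ together with $m$ bounds a segment $[m,q]$ of length in $[\epsilon/4, \text{something}]$ that, for large $j$, all the $J_j$ fellow-travel — forcing the arcs to coincide on a nondegenerate segment and hence $I_j \cap I_k \ne \emptyset$.

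The hard part will be making the last extraction clean: the naive "pick midpoints, take a limit" does not immediately produce intersecting arcs, because the arcs could in principle spread out into infinitely many directions while all staying near $m$. The right fix is a two-stage compactness argument — first converge the midpoints to $m$, then, having localized near $m$, converge a \emph{second} sequence of points lying at a definite distance $\epsilon/4$ along the arcs — together with the basic $\R$-tree fact that two geodesic segments sharing both a common endpoint and a point at positive distance from it must overlap in a nondegenerate arc. Alternatively, and perhaps more cleanly, I could cover $T$ by finitely many balls of radius $\epsilon/4$ (using total boundedness of the compact metric space $T$); any arc of length $\epsilon$ cannot be contained in a single such ball, but its intersection with the ball containing its midpoint is a nonempty subarc, and a pigeonhole on which ball contains infinitely many midpoints, combined with a bound on how many disjoint arcs of length $\ge \epsilon/2$ can pass through a ball of radius $\epsilon/4$ in an $\R$-tree (at most... one, by convexity!), gives the contradiction directly. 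I expect the ball-covering version to be the shortest route and will write the proof that way, reducing everything to: \emph{in an $\R$-tree, a ball of radius $r$ cannot contain two disjoint arcs each of length $\ge 2r$}, which follows from convexity of balls in $\R$-trees.
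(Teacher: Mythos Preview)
Your ball-covering reduction at the end is where the argument breaks. The claim ``in an $\R$-tree, a ball of radius $r$ cannot contain two disjoint arcs each of length $\geq 2r$'' is true when the arcs are literally subsets of the ball (any such arc must pass through the centre), but that is \emph{not} the situation you are in: your arcs have length $\epsilon = 4r$ and are certainly not contained in the ball of radius $r = \epsilon/4$. What you actually need is the stronger assertion that at most one disjoint $\epsilon$-arc can have its midpoint in a given $\epsilon/4$-ball, and this is false. For a concrete counterexample, take a segment $[c,c']$ of length $\epsilon/8$ and attach at $c$ two segments of length $\epsilon/2$ (forming an arc $I_1$ through $c$) and at $c'$ two more segments of length $\epsilon/2$ (forming an arc $I_2$ through $c'$). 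Then $I_1$ and $I_2$ are disjoint $\epsilon$-arcs whose midpoints $c,c'$ lie in the ball $B(c,\epsilon/4)$. One can iterate this along a short base segment to fit arbitrarily many disjoint $\epsilon$-arcs with midpoints in a single small ball, so pigeonholing on balls gives no contradiction.

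Your earlier two-stage compactness idea is closer to a working proof, but the ``basic $\R$-tree fact'' you invoke is misstated: the segments $[m_j,q_j]$ do \emph{not} share a common endpoint, only limiting endpoints. What actually forces an intersection is the four-point inequality: if $[a,b]$ and $[a',b']$ are disjoint arcs in an $\R$-tree, then (via the bridge between them) $d(a,a')+d(b,b') \ge d(a,b)+d(a',b')$; applying this with $a=m_j$, $b=q_j$, $a'=m_k$, $b'=q_k$ and letting $j,k\to\infty$ yields $0 \ge \epsilon$, the desired contradiction. This is a genuinely different route from the paper's, which instead uses a combinatorial reordering lemma (each new arc lies outside the convex hull of the previous ones) to produce directly an infinite $\epsilon/2$-separated set of points, contradicting sequential compactness. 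The paper's argument avoids any analysis of how nearby segments overlap; your approach can be salvaged, but only after supplying the correct intersection criterion.
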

  
  For the proof of Proposition~\ref{prop:compact tree}, we will use the following combinatorial lemma (that we state in the context of $\R$-trees even if it is valid for more general notions of trees).
  
  \begin{lemma}\label{lemma:pwdisjoint}
   Let $T$ be an $\R$-tree.
   Let $\mathcal E$ be an infinite set of pairwise disjoint subtrees of~$T$.
   There exists a sequence $(T_n)_{n \in \N}$ of pairwise distinct elements of $\cE$ with the property that, for all $n$, $T_{n+1}$ does not intersect the convex hull $K_n$ of $\bigcup_{i=1}^n T_i$.
  \end{lemma}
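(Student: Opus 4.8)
The plan is to build the sequence $(T_n)$ greedily. Start with any $T_1 \in \mathcal{E}$, and let $K_1$ be the convex hull of $T_1$, which is just $T_1$ itself since $T_1$ is already a subtree. Having chosen $T_1, \dots, T_n$ and set $K_n = \text{conv}(\bigcup_{i=1}^n T_i)$, I must produce some $T_{n+1} \in \mathcal{E}$, distinct from $T_1, \dots, T_n$, that is disjoint from $K_n$. The key obstacle — and really the only content of the lemma — is showing that such a choice is always available, i.e. that $K_n$ cannot meet infinitely many elements of $\mathcal{E}$. So the heart of the argument is the following claim: if $K$ is the convex hull of finitely many pairwise disjoint subtrees of an $\R$-tree $T$, then $K$ meets only finitely many members of any family of pairwise disjoint subtrees.

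To prove that claim, I would first reduce to the case where $K$ is the convex hull of finitely many \emph{points}: indeed, $K_n = \text{conv}(\bigcup_{i=1}^n T_i)$, and if $T' \in \mathcal{E}$ meets $K_n$ then $\text{conv}(T' \cup K_n)$ is a subtree containing arcs; more usefully, I can pass to a finite subtree. The cleanest route: since the $T_i$ are pairwise disjoint, for each pair $i \neq j$ there is a unique ``bridge'' arc $[p_{ij}, p_{ji}]$ realizing the distance between $T_i$ and $T_j$, with $p_{ij} \in T_i$. Let $P$ be the (finite) set of all such bridge endpoints together with one arbitrary point from each $T_i$. Then $K_n = \bigcup_i T_i \;\cup\; \text{conv}(P)$, where $\text{conv}(P)$ is a finite subtree (finitely many branch points, finitely many arcs). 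Now suppose some $T' \in \mathcal{E}$ meets $K_n$. Either $T'$ meets one of the $T_i$ — impossible if $T' \neq T_i$ and the family is pairwise disjoint, so this forces $T'$ to be one of finitely many exceptions — or $T'$ meets $\text{conv}(P)$, a finite subtree. So it suffices to show a finite subtree $F$ meets only finitely many members of a pairwise-disjoint family. But a finite subtree has a finite number of extremal points; if infinitely many pairwise-disjoint subtrees met $F$, then infinitely many of them would meet $F$ in pairwise disjoint nonempty closed subsets, and by Proposition~\ref{prop:compact tree}-type reasoning (or just directly: the traces $T' \cap F$ are pairwise disjoint subtrees of the finite tree $F$, hence pairwise disjoint \emph{points or arcs}, and a finite tree — having finite total ``combinatorial size'' — cannot contain infinitely many pairwise disjoint nonempty subtrees, since a tree with $k$ extremal points decomposed by its branch points has only finitely many open arc-components, each of which is itself an arc and can be hit by pairwise-disjoint subtrees only... ) — here I would actually invoke the standard fact that a finite $\R$-tree contains no infinite family of pairwise disjoint nondegenerate subtrees, and handle degenerate (single-point) traces by noting the branch/extremal point set is finite, so all but finitely many traces are single non-branch, non-extremal points lying on the interiors of finitely many arcs, and two disjoint subtrees cannot both contain an interior point of the same arc unless... hmm, they can. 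I'd instead argue: $F$ finite $\Rightarrow$ $F$ is homeomorphic to a finite simplicial tree; the members of $\mathcal{E}$ meeting $F$ have pairwise disjoint traces, and disjoint connected subsets of an arc are separated, so along each of the finitely many edges of $F$ only... no — infinitely many disjoint points fit on an arc. The correct finish: use that the $T'$ are subtrees of $T$, not just of $F$; two disjoint subtrees of $T$ whose traces on $F$ meet the interior of a common edge $[a,b]$ of $F$ would both have to ``leave'' $F$ or nest, and an induction on edges together with the bridge-arc structure pins down that each edge of $F$ is met by only finitely many $T'$ in a way compatible with disjointness — this is where I would be most careful.

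Given the claim, the lemma follows immediately: at stage $n$, only finitely many elements of $\mathcal{E}$ meet $K_n$ and only finitely many ($T_1,\dots,T_n$) have already been used, so since $\mathcal{E}$ is infinite there remains some $T_{n+1} \in \mathcal{E}$ disjoint from $K_n$ and distinct from the previous choices; choose one and continue. Induction produces the desired sequence.

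I expect the main obstacle to be the finite-subtree step sketched above — precisely, ruling out that a single finite subtree $F$ of $T$ is hit by infinitely many pairwise disjoint subtrees of $T$. The right way to dispatch it, rather than the muddled counting above, is: for each such $T'$, the intersection $T' \cap F$ is a nonempty subtree of $F$, and for distinct $T', T''$ these are disjoint; pick a point $x_{T'} \in T' \cap F$; distinct points $x_{T'}$ accumulate (as $F$ is compact) at some $x_\infty \in F$; but then infinitely many pairwise disjoint subtrees $T'$ accumulate near $x_\infty$, and since $F$ has only finitely many directions at $x_\infty$, infinitely many of the $x_{T'}$ lie in a single direction $d$ at $x_\infty$ and converge to $x_\infty$ monotonically along an arc $[x_\infty, y] \subset \overline{d}$; two of these subtrees $T', T''$ with $x_{T'}$ between $x_\infty$ and $x_{T''}$ then force the bridge from $T''$ to $x_\infty$ (or to $T'$) to pass through $x_{T'}$, contradicting disjointness of $T'$ and $T''$. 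That ``nesting forces intersection'' step is the crux, and it is exactly the kind of $\R$-tree convexity argument the paper uses repeatedly, so I would present it carefully but briefly.
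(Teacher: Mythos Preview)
Your central claim --- that the convex hull $K_n$ of finitely many members of $\mathcal{E}$ can meet only finitely many other members of $\mathcal{E}$ --- is false, and this sinks the greedy strategy as you have set it up. Consider the $\R$-tree $T$ obtained from the interval $[0,1]$ by attaching a closed ray $R_k$ at each point $1/k$ for $k \geq 2$, together with a ray $R_0$ at $0$. The family $\mathcal{E} = \{R_0, R_2, R_3, \ldots\}$ consists of pairwise disjoint subtrees of $T$. If you set $T_1 = R_0$ and $T_2 = R_2$, then $K_2 = R_0 \cup [0, 1/2] \cup R_2$ contains every point $1/k$ for $k \geq 2$, hence meets \emph{every} remaining $R_k$; there is no valid choice of $T_3$ at all. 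More pointedly, the finite subtree $F = [0, 1/2]$ already meets infinitely many members of $\mathcal{E}$, so the ``finite-subtree step'' you single out as the crux simply cannot hold. Your accumulation argument at the end does not repair this: in the example, the points $x_{R_k} = 1/k$ accumulate at $x_\infty = 0$, and for $k > m$ the arc from $x_{R_m} = 1/m$ to $0$ does pass through $x_{R_k} = 1/k \in R_k$ --- but that arc lies in $[0,1]$, not in $R_m$, so nothing contradicts $R_m \cap R_k = \emptyset$.

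The paper's proof avoids this trap by a case split. If some arc $\gamma \subset T$ meets infinitely many members of $\mathcal{E}$ (precisely the situation above), the linear order along $\gamma$ orders those members, and one extracts a monotone subsequence. If instead every arc meets only finitely many members, one builds the sequence inductively under the stronger invariant that $K_n$ is disjoint from \emph{every} element of $\mathcal{E} \smallsetminus \{T_1,\dots,T_n\}$: given $K_n$, pick any unused $S \in \mathcal{E}$ (disjoint from $K_n$ by the invariant), let $\gamma$ be the bridge from $K_n$ to $S$, and take $T_{n+1}$ to be the element of $\mathcal{E}$ meeting $\gamma$ that is \emph{closest} to $K_n$ (finitely many candidates, by the case hypothesis). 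That ``closest'' choice is exactly what preserves the invariant, and it is the idea your argument is missing.
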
  
  \begin{proof}
  First, suppose there exists an arc $\gamma$ in $T$ intersecting an infinite number of elements of $\mathcal E$.
  Choose an orientation on $\gamma$, which induces a linear ordering $\prec$ on the set of elements of $\mathcal E$ that intersect $\gamma$. Any countable subset of $\cE$ gives rise to a sequence $(T_n)_{n\in\N}$ of pairwise disjoint subtrees of $T$ such that for all $n\in\N$, $T_n\prec T_{n+1}$. In particular $T_{n+1}$ does not intersect the convex hull of $\bigcup_{i=1}^n T_i$.
  
  In the remaining case, every arc of $T$ intersects a finite number of elements of $\mathcal E$. We build the sequence $(T_n)_{n\in\N}$ by induction. First, pick an element $T_1$ of $\mathcal E$. Suppose $T_1,\dots,T_n$ have been chosen so that no element of $\cE - \bigcup_{i=1}^n T_i$ intersects the convex hull $K_n$ of $\bigcup_{i=1}^n T_i$. Let $S$ be an element of $\cE - \bigcup_{i=1}^n T_i$. By assumption, $S$ does not intersect $K_n$. Let $\gamma$ be the arc in $T$ joining $S$ and $K_n$. Set $T_{n+1}$ to be the element of $\mathcal E$ closest to $K_n$ that intersects~$\gamma$.
  \end{proof}
  
  \begin{proof}[Proof of Proposition~\ref{prop:compact tree}]
  Assume towards a contradiction there is a sequence $(\gamma_n)_{n\in\N}$ of disjoint arcs in $T$ of length $\epsilon>0$. By Lemma~\ref{lemma:pwdisjoint}, we can assume that up to reordering indices, $\gamma_{n+1}$ does not lie in the convex hull $K_n$ of $\bigcup_{i=1}^n \gamma_i$.
  Since $\gamma_{n+1}$ has length $\epsilon$, at least one of its extremal points, denoted by $x_{n+1}$, is at distance more than $\epsilon/2$ from $K_n$.
  Thus, for all $i\neq j\in\N$, $d(x_i,x_j)>\epsilon/2$.
  Hence, no subsequence of $(x_k)_{k\in\N}$ is convergent (since such a subsequence is not a Cauchy sequence), which contradicts the compactness of~$T$.
  \end{proof}

  \begin{remark}\label{remark:vol=0}
  Let $K$ be a (non-empty) compact tree. Then $\vol(K)=0$ if and only if $K$ is point. Indeed, if $K$ contains two distinct points, then $K$ contains the arc joining these two points, and hence the volume of $K$ is as least as big as the length of this interval.
  \end{remark}
  
  \begin{notation}
   Let $S = (K,A)$ be a system of partial isometries. Let $A^{-1} = \{a^{-1} \, | \, a \in A\}$, and let $A^\pm = A \cup A^{-1}$. We suppose from now on that $A \cap A^{-1} = \emptyset$.  
   Let $v(x)$ be the band valence of $x$; that is, $v(x) = \#\{a \in A^\pm \mid x \in \dom(a) \}$. 
   Let $K^{=i} = \{ x \in K \mid v(x) = i \}$. Define $K^{\geq i}$ and $K^{\leq i}$ similarly.
  \end{notation}

  \begin{lemma}\label{lemma:K>3 finite} 
  Let $S=(K,A)$ be a system of partial isometries, such that $\vol\left(K^{\geq 3}\right)=0$. Then:
  \begin{enumerate}[(i)]
  \item $K^{\geq 3}$ is a finite set of points in $K$.
  \item If, moreover, the valence of every point in $K$ is finite, then the set 
   $\{\delta \text{ direction at } x \;|\;  x\in K^{\geq 3} \}$ of directions in $K$ at a point in $K^{\geq 3}$ is finite.
  \end{enumerate}
  \end{lemma}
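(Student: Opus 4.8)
The plan is to reduce everything to Remark~\ref{remark:vol=0} through a structural description of $K^{\geq 3}$. For each $a \in A^\pm$ the domain $\dom(a)$ is a compact subtree of $K$ (it is a domain or a range of an element of $A$, see Definition~\ref{def:partial_isom}), and a point $x$ satisfies $v(x) \geq 3$ exactly when it lies in $\dom(a_1)\cap\dom(a_2)\cap\dom(a_3)$ for some three distinct $a_1,a_2,a_3 \in A^\pm$. Since $A^\pm$ is finite and an intersection of finitely many compact subtrees of an $\R$-tree is again a compact subtree (or empty), this exhibits
\[
K^{\geq 3} \;=\; \bigcup_{\{a_1,a_2,a_3\}\subseteq A^\pm} \bigl(\dom(a_1)\cap\dom(a_2)\cap\dom(a_3)\bigr)
\]
as a finite union of compact subtrees of $K$.

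For part (i), I would first record that $K^{\geq 3}$ is itself a compact forest in the sense of Definition~\ref{def:volume}: it is closed (a finite union of the closed sets $\dom(a_1)\cap\dom(a_2)\cap\dom(a_3)$), hence compact as a closed subset of the compact forest $K$; it has only finitely many connected components, since each of the finitely many triple intersections is connected and therefore lies in a single component; and each such component is closed and is convex in $K$ (a connected subset of an $\R$-tree is automatically convex, because a point lying strictly between two points of the subset would separate them in $K$), hence is a compact $\R$-tree. Thus $\vol(K^{\geq 3})$ is the sum of the volumes of these components, and any compact subtree contained in one of them has volume at most that of the component; in particular each piece $\dom(a_1)\cap\dom(a_2)\cap\dom(a_3)$ has volume at most $\vol(K^{\geq 3}) = 0$. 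By Remark~\ref{remark:vol=0}, each of these pieces is then empty or a single point, so $K^{\geq 3}$ is a finite union of points, hence finite.

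Part (ii) is then immediate: by (i) we have $K^{\geq 3} = \{x_1,\dots,x_N\}$ for some $N$, and the hypothesis says each $x_j$ has finitely many directions in $K$; hence the set of directions at points of $K^{\geq 3}$ is a finite union of finite sets. I do not anticipate a genuine obstacle in this argument. The only point requiring a little care is the bookkeeping in part (i) that certifies $K^{\geq 3}$, and each of the triple intersections, as objects to which Definition~\ref{def:volume} and Remark~\ref{remark:vol=0} apply; once that is in place the conclusion is forced.
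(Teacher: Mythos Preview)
Your proof is correct and follows essentially the same approach as the paper's: both write $K^{\geq 3}$ as the finite union of the compact subtrees $\dom(a_1)\cap\dom(a_2)\cap\dom(a_3)$, use Remark~\ref{remark:vol=0} to conclude these (equivalently, the components of $K^{\geq 3}$) are points, and deduce (ii) immediately from (i). The paper is slightly terser---it applies Remark~\ref{remark:vol=0} directly to the components and then notes they inject into the finite set of subsets of $A^{\pm}$ of cardinality at least $3$---but the content is the same.
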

  \begin{proof}
  Property (ii) follows immediately from Property (i).  
  According to Remark~\ref{remark:vol=0}, since $\vol\left(K^{\geq 3}\right)=0$, the components of $K^{\geq 3}$ are points. It remains to show there are only finitely many components.
  Since the domains of the partial isometries in $A^{\pm1}$ are subtrees of the tree $K$, an intersection of domains is also a tree (and possibly the empty set). Thus, the set of components of $K^{\geq 3}$ injects in the set of subsets of $A^{\pm1}$ of cardinality at least 3, which is a finite set since $A^{\pm1}$ is itself a finite set.
  \end{proof}
  
  Let $T$ be a free $\R$-tree with dense orbits and of Levitt type, and let $A$ be a basis of $F_n$.
  Let $K_0$ be the compact heart of $T$ relative to $A$, and let $S_0 = (K_0, A_0)$ be the associated system of partial isometries as in Construction~\ref{const:system}.
  Since $T$ is of Levitt type, the Rips machine does not halt. Let $S_i= (K_i, A_i)$ denote the output after the $i^{th}$ iteration of the Rips Machine. Then, $\vol(K_i^{=1})>0$ for all $i\in\N$.

  \begin{remark}\label{remark:no finite orbit}
  \begin{enumerate}[(i)]
  \item All orbits of $S_0 = (K_0, A_0)$ are infinite; that is, every leaf in the band complex $\cB$ is infinite. Indeed, \cite[Proposition~5.6]{coulboishilion} ensures the system $S_0$ is reduced, which, by definition, implies every orbit is infinite.
  \item The valence of the points in $K_0$ is bounded. Indeed, $K_0$ is a subtree of $\overline{T}$, and the valence of the points of $\overline T$ is bounded (by $2n$ \cite{GJLL}).
  \end{enumerate}
  \end{remark}
  
  \begin{prop}\label{prop:levitt implies overlap}
  Let $T$ be an $F_n$-tree with dense orbits and of Levitt type, and let $A$ be a basis of $F_n$.
  Let $S_0 = (K_0, A_0)$ be the associated system of partial isometries, and let $S_i= (K_i, A_i)$ denote the output after the $i^{th}$ iteration of the Rips Machine.
  Then $\vol\left(K_i^{\geq 3}\right)>0$ for all $i\in\N$.
  \end{prop}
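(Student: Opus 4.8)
The plan is to argue by contradiction. Suppose $\vol\left(K_{i_0}^{\geq 3}\right)=0$ for some $i_0\in\N$; I will derive a contradiction with the hypothesis that $T$ is of Levitt type.

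\textbf{Step 1: the condition propagates under the Rips Machine.} First I would show that $\vol\left(K_j^{\geq 3}\right)=0$ for every $j\geq i_0$, and in fact that each $K_j^{\geq 3}$ is a finite set of points. The key observation is that one iteration of the Rips Machine never increases the band valence of a surviving point: if $x\in K_{j+1}$ and $a\in A_j^{\pm}$ satisfies $x\in\dom(a)$, then when $a(x)\in K_{j+1}$ exactly one maximal restriction of $a$ to a pair of components of $K_{j+1}$ has $x$ in its domain, and when $a(x)\notin K_{j+1}$ none does --- indeed, writing $C,C'$ for the components of $K_{j+1}$ containing $x$ and $a(x)$, the relevant domain is $C\cap\dom(a)\cap a^{-1}(C')$, which is connected as an intersection of subtrees of the $\R$-tree $K_j$. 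Hence $v_{j+1}(x)\leq v_j(x)$ for every $x\in K_{j+1}$, so $K_{j+1}^{\geq 3}\subseteq K_j^{\geq 3}$. Since $\vol\left(K_{i_0}^{\geq 3}\right)=0$, Lemma~\ref{lemma:K>3 finite}(i) gives that $K_{i_0}^{\geq 3}$ is a finite set of points; therefore so is every $K_j^{\geq 3}$ with $j\geq i_0$, and all of them have volume zero. Because the valence of the points of $\overline T$ is bounded (Remark~\ref{remark:no finite orbit}(ii)), Lemma~\ref{lemma:K>3 finite}(ii) further shows that the set of directions of $K_j$ at points of $K_j^{\geq 3}$ is finite for every $j\geq i_0$.

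\textbf{Step 2: volume bookkeeping.} Since $K_{j+1}=K_j^{\geq 2}$ and $\vol\left(K_j^{\geq 3}\right)=0$, we have $\vol(K_{j+1})=\vol\left(K_j^{=2}\right)$, so $\vol(K_j)-\vol(K_{j+1})$ is the volume of $K_j^{\leq 1}$, which contains the positive-volume set $K_j^{=1}$ (recall that, the system being reduced and the Rips Machine --- by Levitt type --- not halting, $\vol\left(K_j^{=1}\right)>0$ for all $j$). Thus $\vol(K_j)$ strictly decreases, and the sets $K_j\smallsetminus K_{j+1}$, $j\geq i_0$, are pairwise disjoint subsets of the compact forest $K_{i_0}$. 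Moreover, every component of $K_j^{=1}$ lies in a single domain, so has diameter at most $\max_{a\in A_j}\diam(a)$, which tends to $0$ by the Levitt hypothesis. (If needed, the injection $K_{j+1}^{=1}\hookrightarrow K_j^{=1}$ sending a point to its image under the unique old partial isometry whose image left $K_{j+1}$ --- injective because band valence one at the image point determines that partial isometry --- shows $\vol\left(K_j^{=1}\right)$ is nonincreasing.)

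\textbf{Step 3: the contradiction, and the main obstacle.} It remains to show that the conclusion of Step~1 --- that from stage $i_0$ on the band complex has no essential triple overlap --- is incompatible with $T$ being of Levitt type. I would establish this by showing that $\bigcap_{j\geq i_0}K_j$ is not totally disconnected. This set coincides with the limit set $\Omega$ of $S_{i_0}$ --- a point lying in every $K_j$ has band valence at least $2$ at each stage and so, by a K\"onig's lemma argument, lies on a bi-infinite admissible reduced leaf --- and $\Omega$ is totally disconnected precisely when $T$ is of Levitt type, so a non-degenerate arc in $\bigcap_{j\geq i_0}K_j$ finishes the proof. To produce such an arc I would use that, off the fixed finite configuration of points and directions $K_j^{\geq 3}$, the system decomposes into boundedly many ``surface'' pieces of band valence $\leq 2$; applying Proposition~\ref{prop:compact tree} to keep these pieces finite, and combining it with the facts that $K_j^{=1}$ always has positive volume while its components shrink to diameter $0$, one should extract arcs $\gamma_j\subseteq K_j$ of a uniform positive length whose midpoints subconverge in $K_{i_0}$; a Hausdorff limit of the $\gamma_j$ is then a non-degenerate subtree of $\bigcap_{j\geq i_0}K_j$, which contains the required arc. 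The hard part, where I expect the real difficulty, is exactly this last step: pinning down the ``surface'' pieces forced by the absence of essential triple overlaps and converting them into a \emph{uniform} lower bound on arc lengths --- equivalently, proving that ``no essential triple overlap at every stage'' forces the Rips Machine to halt. Everything preceding Step~3 is bookkeeping on top of Steps~1 and~2.
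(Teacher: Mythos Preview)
Your Steps~1 and~2 are correct and match the paper's setup (the monotonicity $K_{j+1}^{\geq 3}\subseteq K_j^{\geq 3}$ and the reduction to $i_0=0$ are exactly what the paper does). The genuine gap is Step~3, which you yourself flag as incomplete: you never produce the uniform lower bound on arc lengths, and the Hausdorff-limit scheme you sketch is not carried out.

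The paper's Step~3 is quite different from what you attempt, and it avoids the limit set $\Omega$ entirely. Rather than trying to show $\Omega$ contains an arc, the paper manufactures infinitely many pairwise disjoint arcs of \emph{equal} length inside $K_0$ and invokes Proposition~\ref{prop:compact tree} directly. Concretely: start with an arc $\gamma_0\subseteq K_0^{=1}$ and push it forward by the unique partial isometry defined on it to get $\gamma_1$. Since the system is reduced (no finite orbits), $\gamma_1\subseteq K_0^{\geq 2}$. If $\gamma_1$ meets $K_0^{\geq 3}$---a finite set by Lemma~\ref{lemma:K>3 finite}---shrink $\gamma_0$ so that $\gamma_1\subseteq K_0^{=2}$; then exactly one \emph{new} partial isometry is defined on $\gamma_1$, and pushing forward gives $\gamma_2$. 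Iterate. Lemma~\ref{lemma:K>3 finite}(ii) guarantees the shrinking step is needed only finitely many times, so after finitely many initial adjustments one gets an infinite sequence $(\gamma_n)$ of arcs, all of the same length (the maps are isometries), and pairwise disjoint because one is effectively performing successive Rips moves on $\gamma_0,\gamma_1,\gamma_2,\dots$. This contradicts Proposition~\ref{prop:compact tree}.

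The moral: you were looking for an arc in $\bigcap_j K_j$, which is hard; the paper instead follows a single ``interval-exchange-like'' orbit of an arc inside $K_0$, and the absence of triple overlap makes that orbit infinite and injective on arcs. This is the missing idea your proposal needs.
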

  \begin{proof}
  By contradiction, suppose $\vol\left(K_{i_0}^{\geq3}\right)=0$ for some $i_0\in\N$.
  By definition of the Rips machine, $K_{i+1}^{\geq 3}\subseteq K_{i}^{\geq 3}$.
  In particular, the sequence $\left(\vol\left(K_i^{\geq 3}\right)\right)_i$ is decreasing, and thus $\vol\left(K_{i}^{\geq3}\right)=0$ for all $i\geq i_0$. We can suppose that $i_0=0$.

  We define by induction a sequence $(\gamma_n)$ of subarcs of $K_0$.
  Let $\gamma_0$ be an arc contained in $K_0^{=1}$.
  There is only one partial isometry $a$ defined on $\gamma_0$; let $\gamma_1=\gamma_0\cdot a$, the image of $\gamma_0$ by~$a$. Since $S_0$ has no finite orbit, $\gamma_1 \subseteq K_0^{\geq 2}$. In particular, $\gamma_0$ and $\gamma_1$ are disjoint.

  {\em \underline{Case 1}:} $\gamma_1 \subseteq K_0^{=2}$. There are exactly two partial isometries defined on $\gamma_1$: one is $a^{-1}$; let $b$ denote the other. Set $\gamma_2=\gamma_1\cdot b$. Again, $\gamma_2 \subseteq K_0^{\geq 2}$ since $S_0$ has no finite orbits.

  {\em \underline{Case 2}:} $\gamma_1\cap K_0^{\geq 3}\neq\emptyset$. Replace $\gamma_0$ by a subarc so $\gamma_1\subseteq K_0^{=2}$; this procedure is possible since $K_0^{\geq 3}$ is a finite subset of $K_0$ by Lemma~\ref{lemma:K>3 finite}. Define $\gamma_2$ as in Case 1.

  Iterate this process. Since the valence of the points of $K$ is finite (see Remark~\ref{remark:no finite orbit}),  statement (ii) of Lemma~\ref{lemma:K>3 finite} ensures that after a finite number of iterations, Case 2 stops to occur.

  Finally, the arcs $\gamma_n$ are pairwise disjoint. Indeed, $\gamma_0 \subseteq K_0^{=1}$ and $\gamma_i \subseteq K_0^{=2}$ for all $i\in\N$. (In other words, one can perform a sequence of ``Rips moves'' successively based on $\gamma_0$, $\gamma_1$, $\gamma_2$, and so on.) This collection contradicts Proposition~\ref{prop:compact tree}.
\end{proof}

    By \cite[Proposition~4.3]{coulboishilion}, if $S_0 = (K_0, A_0)$ is a pseudo-surface system of partial isometries, then  $\vol\left(K_0^{\geq 3}\right)=0$. As shown in \cite[Proposition 5.14]{coulboishilion}, a free mixing $F_n$-tree $T$ is either pseudo-surface -- in which case $\vol\left(K_0^{\geq 3}\right)=0$ by \cite[Proposition~4.3]{coulboishilion} -- or Levitt type -- in which case $\vol\left(K_i^{\geq 3}\right)>0$ for all $i\in\N$ by Proposition~\ref{prop:levitt implies overlap}. Combining these two propositions yields the following corollary.
    
  \begin{cor}\label{cor:type}
  Let $T$ be a free mixing $F_n$-tree and let $A$ be a basis of $F_n$.
  Let $S_0 = (K_0, A_0)$ be the associated system of partial isometries, and let $S_i= (K_i, A_i)$ denote the output after the $i^{th}$ iteration of the Rips Machine. Then
  \begin{itemize}
  \item $T$ is pseudo-surface if and only if $\vol\left(K_i^{\geq 3}\right)=0$ for some $i\in\N$,
  \item $T$ is Levitt type if and only if $\vol\left(K_i^{\geq 3}\right)>0$ for all $i\in\N$.
  \end{itemize}
   \qed
  \end{cor}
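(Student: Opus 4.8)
The plan is to deduce the corollary by combining Proposition~\ref{prop:levitt implies overlap} with two previously known facts: the dichotomy that a free mixing $F_n$-tree is either pseudo-surface or of Levitt type \cite[Proposition~5.14]{coulboishilion}, and the implication that a pseudo-surface system of partial isometries $S_0 = (K_0,A_0)$ satisfies $\vol\bigl(K_0^{\geq 3}\bigr)=0$ \cite[Proposition~4.3]{coulboishilion}. Since a mixing action has dense orbits, whenever $T$ is free mixing the hypotheses of Proposition~\ref{prop:levitt implies overlap} are satisfied, so that proposition applies directly in the Levitt case.

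First I would record the monotonicity $K_{i+1}^{\geq 3}\subseteq K_i^{\geq 3}$: a point lies in $K_{i+1}$ only if at least two partial isometries of $A_i$ are defined there, so $K_{i+1}\subseteq K_i$, and passing to maximal restrictions cannot raise the band valence of a point, whence the inclusion. Consequently the sequence $\bigl(\vol(K_i^{\geq 3})\bigr)_{i\in\N}$ is non-increasing, and exactly one of the two situations occurs: either $\vol\bigl(K_i^{\geq 3}\bigr)=0$ for some $i$ (equivalently, for all sufficiently large $i$), or $\vol\bigl(K_i^{\geq 3}\bigr)>0$ for all $i$. Thus the two numerical conditions in the statement are complementary, and it suffices to establish the forward implication of each biconditional. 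If $T$ is pseudo-surface, then \cite[Proposition~4.3]{coulboishilion} gives $\vol\bigl(K_0^{\geq 3}\bigr)=0$, hence $\vol\bigl(K_i^{\geq 3}\bigr)=0$ for all $i$ by monotonicity, in particular for some $i$. If $T$ is of Levitt type, then Proposition~\ref{prop:levitt implies overlap} gives $\vol\bigl(K_i^{\geq 3}\bigr)>0$ for all $i\in\N$. The converses now follow formally: if $\vol\bigl(K_i^{\geq 3}\bigr)=0$ for some $i$, then $T$ is not of Levitt type (contrapositive of Proposition~\ref{prop:levitt implies overlap}), so by the dichotomy $T$ is pseudo-surface; and if $\vol\bigl(K_i^{\geq 3}\bigr)>0$ for all $i$, then $T$ is not pseudo-surface (contrapositive of \cite[Proposition~4.3]{coulboishilion}), so $T$ is of Levitt type.

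The argument is essentially bookkeeping, and I do not expect a genuine obstacle: the substantive content is already carried by Proposition~\ref{prop:levitt implies overlap} (the new ingredient) together with the cited results of Coulbois--Hilion. The only point requiring a little care is the complementarity of the two numerical conditions, which rests on the monotonicity $K_{i+1}^{\geq 3}\subseteq K_i^{\geq 3}$ and on the fact that the pseudo-surface and Levitt-type cases exhaust all free mixing $F_n$-trees.
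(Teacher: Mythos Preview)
Your proposal is correct and follows essentially the same approach as the paper: combine the dichotomy \cite[Proposition~5.14]{coulboishilion}, the pseudo-surface implication \cite[Proposition~4.3]{coulboishilion}, and Proposition~\ref{prop:levitt implies overlap}, exactly as you outline. Your explicit discussion of the monotonicity $K_{i+1}^{\geq 3}\subseteq K_i^{\geq 3}$ and the complementarity of the two numerical conditions is a bit more detailed than the paper's one-paragraph sketch (the monotonicity is recorded inside the proof of Proposition~\ref{prop:levitt implies overlap} rather than restated for the corollary), but the logical content is identical.
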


\section{Cannon--Thurston maps} \label{sec:CTmaps}

  The existence of a Cannon--Thurston map in the setting of hyperbolic free-by-cyclic groups was shown by Mitra (Mj) \cite{mitra}. The structure of Cannon--Thurston maps for hyperbolic free-by-cyclic groups which do not virtually split over $\Z$ was investigated by Kapovich--Lustig \cite{kapovichlustig}. We will use the following results. 
  
  \begin{thm} \cite{mitra}\label{thm:CT}
   Let $G_{\phi} = F_n \rtimes_{\phi} \Z$ be a hyperbolic group. There exists a continuous surjection $\hi: \p F_n \rightarrow \p G_{\phi}$. 
   The map $\hi$ is the {\it Cannon--Thurston map}.
    \qed
  \end{thm}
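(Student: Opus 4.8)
\medskip
\noindent\textbf{Proof proposal.}
The plan is to verify the standard criterion for the existence of a Cannon--Thurston map and to build the relevant quasiconvex subsets by Mitra's ladder technique. Fix a basis $A$ of $F_n$; together with the stable letter $t$ of the presentation \eqref{hnnpresentation} this realizes $\G_{F_n}$ as a subgraph of $\G_{G_\phi}$, with common basepoint $1$. Recall that an $F_n$-equivariant continuous extension $\hi\from \p F_n \to \p G_\phi$ of the inclusion $F_n\hookrightarrow G_\phi$ exists provided: for every $R>0$ there is $R'>0$ such that whenever $\lambda$ is a geodesic segment of $\G_{F_n}$ with $d_{F_n}(1,\lambda)\geq R'$, every geodesic of $\G_{G_\phi}$ joining the endpoints of $\lambda$ avoids the ball $B_{G_\phi}(1,R)$. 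So the whole statement reduces to this one divergence estimate; I will recover continuity of $\hi$ on all of $\p F_n$ (by approximating an arbitrary boundary point with geodesic segments), $F_n$-equivariance (automatic from the construction), and surjectivity afterward.

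To prove the estimate I would view $\G_{G_\phi}$ as a ``line of spaces'' over the Cayley graph of $\Z=\la t\ra$: coarsely, $\G_{G_\phi}=\bigcup_{k\in\Z}X_k$, where the $k$-th sheet $X_k=\G_{F_n}\cdot t^k$ is a copy of $\G_{F_n}$ and the $t^{\pm1}$-edges join $X_k$ to $X_{k\pm1}$ via the maps induced by $\Phi^{\pm1}$ (together with the bounded-cancellation ``closest point'' correction). Given a geodesic segment $\lambda=\lambda_0\subset X_0$, I would build the \emph{hyperbolic ladder} $L_\lambda=\bigcup_{k\in\Z}\lambda_k$ by induction on $|k|$: let $\lambda_{k\pm1}$ be a geodesic of $X_{k\pm1}$ joining the two closest-point images in $X_{k\pm1}$ of the endpoints of $\lambda_k$. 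Define $\pi\from \G_{G_\phi}\to L_\lambda$ sheetwise, sending a vertex of $X_k$ to a nearest point of $\lambda_k$ in the metric of $X_k$; this sheetwise projection is coarsely Lipschitz because $F_n$ is hyperbolic and geodesics in $\G_{F_n}$ are uniformly quasiconvex.

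The main step, and the place where I expect the real difficulty, is to show that $\pi$ is a coarse Lipschitz retraction of $\G_{G_\phi}$ onto $L_\lambda$ with constants independent of $\lambda$; this gives that $L_\lambda$ is $\delta'$-quasiconvex in the hyperbolic space $\G_{G_\phi}$ with $\delta'$ depending only on $n$ and $\phi$. Edges inside a single sheet are handled by the sheetwise estimate, so the only new case is a $t^{\pm1}$-edge crossing from $X_k$ to $X_{k\pm1}$, where one must bound $d_{X_{k\pm1}}\!\big(\pi(x)\cdot t^{\pm1},\lambda_{k\pm1}\big)$ in terms of $d_{X_k}(x,\lambda_k)$. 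This is exactly where the hyperbolicity of $G_\phi$ (equivalently, $\phi$ atoroidal) must be used: it forces a flaring inequality --- along every leaf of the suspension flow the distance to $L_\lambda$ grows at least exponentially in at least one of the two time directions --- and this flaring, combined with bounded cancellation for $\Phi^{\pm1}$, is what yields the coarse-Lipschitz control on the sheet-change step. Proving this flaring statement uniformly, and extracting from it the coarse-Lipschitz bound, is the technical heart of the argument.

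Finally, I would conclude as follows. If $\mu$ is a $\G_{G_\phi}$-geodesic joining the endpoints of $\lambda$, then uniform quasiconvexity gives a constant $C=C(n,\phi)$ with $\mu\subset\mathcal{N}_C(L_\lambda)$, and the same flaring estimate shows $d_{G_\phi}(1,L_\lambda)\to\infty$ as $d_{F_n}(1,\lambda)\to\infty$ (the sheets $\lambda_k$ with $k\neq 0$ are pushed far away by flaring, while $\lambda_0=\lambda$ is far by hypothesis). Hence $d_{G_\phi}(1,\mu)$ is bounded below by an unbounded increasing function of $d_{F_n}(1,\lambda)$, which verifies the criterion and produces $\hi$. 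Surjectivity is then immediate: $\hi(\p F_n)$ is a nonempty compact, hence closed, subset of $\p G_\phi$, and one checks it equals the limit set of $F_n$ in $\p G_\phi$; since $F_n\trianglelefteq G_\phi$ that limit set is $G_\phi$-invariant, and $\p G_\phi$ is $G_\phi$-minimal, so $\hi(\p F_n)=\p G_\phi$.
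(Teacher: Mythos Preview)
The paper does not supply a proof of this statement: it is quoted as a theorem of Mitra \cite{mitra} and closed with a \qed. Your proposal is a faithful outline of Mitra's original ladder argument (the coarse Lipschitz retraction onto $L_\lambda$, uniform quasiconvexity of the ladder, and the divergence criterion for the existence of $\hi$), and the surjectivity step via normality of $F_n$ and minimality of the $G_\phi$-action on $\p G_\phi$ is correct; so there is nothing to compare---you have simply reproduced more than the paper does.
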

   
    \begin{thm} \label{thm:CT_not_inj} \cite{mitralaminations, kapovichlustig}
   Suppose $(X,Y) \in \p^2F_n$, and let $\hi:\p F_n \rightarrow \p G_{\phi}$ be the Cannon--Thurston map. Then, $\hi(X) = \hi(Y)$ if and only if $(X,Y) \in L(T_+) \cup L(T_-)$. 
    \qed
  \end{thm}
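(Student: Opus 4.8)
We identify the Cannon--Thurston lamination $\Lambda_{CT} := \{(X,Y) \in \doublebndry \mid \hi(X) = \hi(Y)\}$ with $L(T_+) \cup L(T_-)$ by proving the two inclusions separately, and I expect the second to carry essentially all the weight.

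The inclusion $L(T_+) \cup L(T_-) \subseteq \Lambda_{CT}$ is immediate from the Kapovich--Lustig factorization $\hi = \cR_+ \circ \cQ_+ = \cR_- \circ \cQ_-$ recalled in the introduction: if $(X,Y) \in L(T_\pm)$, then $\cQ_\pm(X) = \cQ_\pm(Y)$ by Definition~\ref{def:lam}, and applying the relevant $\cR_\pm$ yields $\hi(X) = \hi(Y)$.

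For the reverse inclusion $\Lambda_{CT} \subseteq L(T_+) \cup L(T_-)$ the plan is to combine Mitra's structure theory for Cannon--Thurston maps of hyperbolic extensions with the Kapovich--Lustig identification of the resulting pieces. First I would record that $\Lambda_{CT}$ is a lamination of $F_n$ in the sense of Section~\ref{subsec:Q_laminations}: it is closed because $\hi$ is continuous and $\p G_\phi$ is compact Hausdorff, it is $F_n$-invariant because $\hi$ is $F_n$-equivariant, and it is flip-invariant because the defining condition is symmetric in $X$ and $Y$. Moreover, conjugation by $t^{\pm 1}$ is an automorphism of $G_\phi$ (hence induces a homeomorphism of $\p G_\phi$) that restricts on $F_n$ to an automorphism representing $\phi^{\mp 1}$, so naturality of the Cannon--Thurston map shows $\Lambda_{CT}$ is also invariant under $\p\Phi$ and $\p\Phi^{-1}$ for every $\Phi$ representing $\phi$. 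By Mitra's analysis \cite{mitralaminations}, every leaf of $\Lambda_{CT}$ then lies in one of the two algebraic ending laminations of $G_\phi$ attached to the two ends of the vertical $\Z$; and by \cite{kapovichlustig} these algebraic ending laminations are exactly $L(T_-)$ and $L(T_+)$ (equivalently, by Lemma~\ref{diagLemma}, $\mathrm{diag}(\Lam_+^\phi)$ and $\mathrm{diag}(\Lam_-^\phi)$). Since each of $L(T_+)$ and $L(T_-)$ is already closed under the diagonal operation and $L(T_+) \cap L(T_-) = \emptyset$ by Proposition~\ref{lem:T+-int_empty}, the set $L(T_+) \cup L(T_-) \cup \Delta$ is an equivalence relation, leaving no room for $\hi$ to identify anything further, and so $\Lambda_{CT} = L(T_+) \cup L(T_-)$.

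The hard part is precisely this second inclusion: one must rule out that $\hi$ identifies a pair $(X,Y)$ kept apart by both $\cQ_+$ and $\cQ_-$. A priori the relation $\hi(X)=\hi(Y)$ could be strictly larger than $L(T_+) \cup L(T_-)$ — which is not even obviously transitive — and pinning it down genuinely needs the finiteness and uniform boundedness of the Cannon--Thurston fibers together with the index bookkeeping of \cite{kapovichlustig}, which together show that $L(T_+) \cup L(T_-) \cup \Delta$ already realizes all identifications made by $\hi$. A more self-contained attempt using only the factorization runs into the circularity that the non-injectivity of $\cR_+$ on $\hT_+ \smallsetminus T_+$ is itself governed by $L(T_-)$, so one does not escape invoking these two references.
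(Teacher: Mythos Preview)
The paper does not give its own proof of this theorem: it is quoted from \cite{mitralaminations, kapovichlustig} and closed with a \qed, so there is nothing in the paper to compare your argument against. Your outline is a faithful sketch of how the result is obtained in those references --- Mitra identifies $\Lambda_{CT}$ with the union of the ending laminations attached to the two ends of the $\Z$-direction, and Kapovich--Lustig identify those with $L(T_\pm) = \mathrm{diag}(\Lambda_\mp^\phi)$.

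One caution on logical order: you invoke the factorization $\hi = \cR_\pm \circ \cQ_\pm$ (Proposition~\ref{prop:CT_splits}) to get the easy inclusion $L(T_+)\cup L(T_-)\subseteq \Lambda_{CT}$, but in \cite{kapovichlustig} that factorization is \emph{derived from} the present theorem, not proved independently. So if you are aiming for a self-contained account rather than a summary, you should instead argue the easy inclusion directly (e.g.\ via Mitra's criterion that a leaf of an ending lamination is collapsed by $\hi$), and only afterwards deduce the existence of $\cR_\pm$. As written your sketch is circular at that point, though harmlessly so since both facts live in the same cited source.
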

  
  \begin{prop} \label{prop:CT_splits} \cite[Proposition 4.8, Lemma 4.9]{kapovichlustig}
   The Cannon--Thurston map \[\hi: \p F_n \rightarrow \p G_{\phi}\] factors through the maps 
    \[\cQ_+:\p F_n \rightarrow \hT_+ \,\, \text{ and } \,\,\cQ_-:\p F_n \rightarrow \hT_-,\] and thus induces well-defined maps 
    \[\cR_+:\hT_+ \rightarrow \p G_{\phi} \,\, \text{ and } \,\, \cR_-:\hT_- \rightarrow \p G_{\phi}\] which are surjective and $F_n $-equivariant. Furthermore,
   \begin{enumerate}
    \item[(a)] $\cR_+(\oT_+) \cap \cR_+(\hT_+ \backslash \oT_+) = \emptyset$. \\
      Likewise, $\cR_-(\oT_-) \cap \cR_-(\hT_- \backslash \oT_-) = \emptyset$. 
    \item[(b)]  $\cR_+(\oT_+) \cap \cR_-(\oT_-) = \emptyset$. 
    \item[(c)] The restriction $\cR_+|_{\oT_+}$ of $\cR_+$ to the metric completion of $T_+$ is injective. \\
      Likewise, the restriction $\cR_-|_{\oT_-}$ of $\cR_-$ to the metric completion of $T_-$ is injective.   \qed
   \end{enumerate}
  \end{prop}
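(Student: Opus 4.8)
The plan is to obtain $\cR_\pm$ as set maps directly from the characterization of the Cannon--Thurston fibers, and then read off every listed property from that characterization together with Lemma~\ref{endsToBndry}. By Definition~\ref{def:lam}, $\cQ_\pm(X)=\cQ_\pm(Y)$ holds precisely when $X=Y$ or $(X,Y)\in L(T_\pm)$, and in either case Theorem~\ref{thm:CT_not_inj} yields $\hi(X)=\hi(Y)$; hence $\cR_\pm(p):=\hi(X)$ for any $X\in\cQ_\pm^{-1}(p)$ is well defined, and since $\cQ_\pm$ surjects onto $\hT_\pm$ (Theorem~\ref{thm:Q_map}) this defines maps $\cR_\pm\colon\hT_\pm\to\p G_\phi$ with $\hi=\cR_\pm\circ\cQ_\pm$. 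Surjectivity of $\cR_\pm$ follows from surjectivity of $\hi$ (Theorem~\ref{thm:CT}). For continuity, observe that $\cQ_\pm$ is a continuous surjection from the compact space $\p F_n$ onto the Hausdorff space $\hT_\pm$ (it is a dendrite in the observers' topology), hence a closed, and therefore quotient, map; since $\cR_\pm\circ\cQ_\pm=\hi$ is continuous, $\cR_\pm$ is continuous. Equivariance of $\cR_\pm$ is inherited from that of $\cQ_\pm$ and $\hi$. Throughout what follows I will use the identity $\cR_\pm^{-1}(z)=\cQ_\pm\bigl(\hi^{-1}(z)\bigr)$ and the mirror of Lemma~\ref{endsToBndry} obtained by replacing $\phi$ with $\phi^{-1}$ (using $T_+^\phi=T_-^{\phi^{-1}}$, $T_-^\phi=T_+^{\phi^{-1}}$): for every $X\in\cE L(T_+)$ one has $\cQ_-(X)\in\pT_-$.

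For item (a), suppose $\cR_+(p)=\cR_+(q)$ with $p\in\oT_+$ and $q\in\pT_+$. Pick $X\in\cQ_+^{-1}(p)$; by Theorem~\ref{thm:Q_map}, $q$ has a unique preimage $Y$. Then $\hi(X)=\hi(Y)$, so by Theorem~\ref{thm:CT_not_inj} either $(X,Y)\in L(T_+)$ — giving $p=\cQ_+(X)=\cQ_+(Y)=q$, impossible since $\oT_+\cap\pT_+=\emptyset$ — or $(X,Y)\in L(T_-)$ — giving $X\in\cE L(T_-)$, whence $p=\cQ_+(X)\in\pT_+$ by Lemma~\ref{endsToBndry}, again a contradiction; the statement for $\cR_-$ is identical using the mirror of Lemma~\ref{endsToBndry}. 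For item (c), suppose $\cR_+(p)=\cR_+(p')$ with $p,p'\in\oT_+$; choosing preimages $X,X'$, we get $\hi(X)=\hi(X')$, and the cases $X=X'$ and $(X,X')\in L(T_+)$ both give $p=p'$, while $(X,X')\in L(T_-)$ is excluded because it would force $p=\cQ_+(X)\in\pT_+$ by Lemma~\ref{endsToBndry}, contradicting $p\in\oT_+$; so $\cR_+|_{\oT_+}$ is injective, and likewise $\cR_-|_{\oT_-}$.

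For item (b), suppose $z\in\cR_+(\oT_+)\cap\cR_-(\oT_-)$, say $z=\cR_+(p)=\cR_-(q)$ with $p\in\oT_+$, $q\in\oT_-$, and choose $X\in\cQ_+^{-1}(p)$ and $Y\in\cQ_-^{-1}(q)$, so $X,Y\in\hi^{-1}(z)$. By Theorem~\ref{thm:CT_not_inj}, either $(X,Y)\in L(T_-)$ — forcing $p=\cQ_+(X)\in\pT_+$ by Lemma~\ref{endsToBndry}, contradicting $p\in\oT_+$ — or $(X,Y)\in L(T_+)$ — forcing $q=\cQ_-(Y)\in\pT_-$ by the mirror of Lemma~\ref{endsToBndry}, contradicting $q\in\oT_-$ — or $X=Y$. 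The remaining case $X=Y$ is the crux: it says $\cQ_+(X)\in\oT_+$ and $\cQ_-(X)\in\oT_-$ simultaneously, and one must show this cannot happen. I expect this to be the only real obstacle: the factoring, surjectivity, continuity, equivariance, and items (a) and (c) are formal consequences of Theorems~\ref{thm:Q_map},~\ref{thm:CT_not_inj} and Lemma~\ref{endsToBndry}, whereas ruling out a point $X\in\p F_n$ with $\cQ_+(X)\in\oT_+$ and $\cQ_-(X)\in\oT_-$ genuinely requires the finer structure theory of the $\cQ$-maps — in particular the fact that a point of $\p F_n$ whose $\cQ_+$-image lies in $\oT_+$ must be an endpoint of a leaf of $L(T_+)$ (so that, combined with the mirror statement, such an $X$ would lie in $\cE L(T_+)\cap\cE L(T_-)$ while having $\cQ_+(X)\in\oT_+$, contradicting Lemma~\ref{endsToBndry}). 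This last input, together with the disjointness $L(T_+)\cap L(T_-)=\emptyset$ of Proposition~\ref{lem:T+-int_empty}, is the substance of \cite[Proposition~4.8, Lemma~4.9]{kapovichlustig}, and I would import it from there rather than reprove it.
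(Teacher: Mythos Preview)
The paper does not prove this proposition: it is stated with a citation to \cite[Proposition 4.8, Lemma 4.9]{kapovichlustig} and closed with a \qed, so there is no in-paper argument to compare against. Your reconstruction is therefore being judged on its own merits.

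Your derivation of the maps $\cR_\pm$, their surjectivity, continuity (which the statement does not even ask for), and $F_n$-equivariance is clean and correct. Items (a) and (c) are handled correctly via Theorem~\ref{thm:CT_not_inj} and Lemma~\ref{endsToBndry}; the one cosmetic omission is that in (a) you should explicitly dispose of the case $X=Y$ before invoking Theorem~\ref{thm:CT_not_inj} (which is a statement about $\p^2F_n$, hence about distinct points), but that case gives $p=q$ immediately and is harmless.

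You have also correctly isolated the one non-formal point, namely the case $X=Y$ in item (b): one needs that no $X\in\p F_n$ satisfies $\cQ_+(X)\in\oT_+$ and $\cQ_-(X)\in\oT_-$ simultaneously. Your proposed mechanism---that $\cQ_+(X)\in\oT_+$ forces $X\in\cE L(T_+)$, and then Lemma~\ref{endsToBndry} (in its mirror form) yields the contradiction---is exactly the content one has to import from \cite{kapovichlustig}, and you are right that it does not follow from the statements assembled in this paper alone. Since the paper itself defers the entire proposition to that reference, your explicit delineation of what is formal and what is substantive is, if anything, more informative than what the paper provides.
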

  
\section{The directional Whitehead graph} \label{sec_dir_pat}
    
    In this section, we introduce a tool called the {\it directional Whitehead graph} to study certain finer asymptotic relations between singular leaves. We define the graph in two ways: first, using the $\cQ$-map and its dual lamination, and second, using the band complex associated to a system of partial isometries and its dual lamination.   
    For fully irreducible outer automorphisms, there is a well-studied correspondence between these objects, and we prove in Lemma \ref{lemma_WH_same} that these two definitions agree in this setting. 
    Our main theorem hinges on a certain property of the directional Whitehead graphs of $T_-^\phi$. 

\subsection{Directional Whitehead graph} \label{subsec_dir_wh}

    \begin{defn} \label{def_dirWH}
      Let $T$ be an $\R$-tree with a very small, minimal action of $F_n$ by isometries and dense orbits, and let $\cQ:\p F_n \rightarrow T$ be the $\cQ$-map given in Theorem \ref{thm:Q_map}. Let $L(T)$ be the lamination dual to $T$. Let $x \in T$ be a branch point, and let $d$ be a component of $T \setminus \{x\}$. The \emph{directional Whitehead graph of $T$ at $d$}, denoted $\Wh_T(x,d)$, is defined as follows. There is an edge $(Y,Y')$ in the graph if there exists a  leaf $\ell = (Y,Y') \in L(T)$ such that $\cQ(\ell) = x$ and for which there exists a sequence $\{\ell_i\}_{i=1}^\infty \subset L(T)$ limiting to $\ell$ and $\cQ( \ell_i) \in d$ for all $i$. Edges share a vertex if the corresponding leaves are asymptotic.
    \end{defn}

   \begin{remark}
     In this paper, the only property of the directional Whitehead graph that we are interested in is whether the graph contains more than one edge. An interesting problem is to investigate other properties of the graph such as connectivity.
  \end{remark}

    The edges in the directional Whitehead graph, by definition, correspond to leaves which are limits of other leaves; that is, every edge of a directional Whitehead graph corresponds to a leaf in the {\it derived lamination}; see Definition~\ref{defn_deriv_lam}. Recall Lemma \ref{lemma_bhf_limit}: if $\phi \in \out$ is fully irreducible, then $L(T_-)' = \Lam_+^\phi$. 

    The second definition of the directional Whitehead graph is given in terms of systems of partial isometries. The second definition is used in Lemma \ref{DWvsTLevitt}.

  \begin{defn} \label{def_dirWH_band}
    Let $(K,A)$ be a system of partial isometries, and let $\cB$ be the corresponding band complex. Let $x \in K$, and let $d$ be a direction of $K$ at $x$. The {\it directional Whitehead graph of $d$ with respect to $(K,A)$}, denoted $\Wh_{(K,A)}(x,d)$, is defined as follows. There is an edge $(\xi,\eta)$ in the graph $\Wh_{(K,A)}(x,d)$ if there exists a leaf $(\xi,\eta) \in L(\cB)$   based at $x$ and sequences of leaves $\{(\xi_i,\eta_i)\}_{i \in \N}$ based at $x_i \in d$ for all $i$ and so that $\xi_i \to \xi$, $\eta_i \to \eta$. See Figure \ref{fig_bands_WH} for an illustration.  \end{defn}

  \begin{figure}
     \begin{overpic}[scale=.8, tics=5]{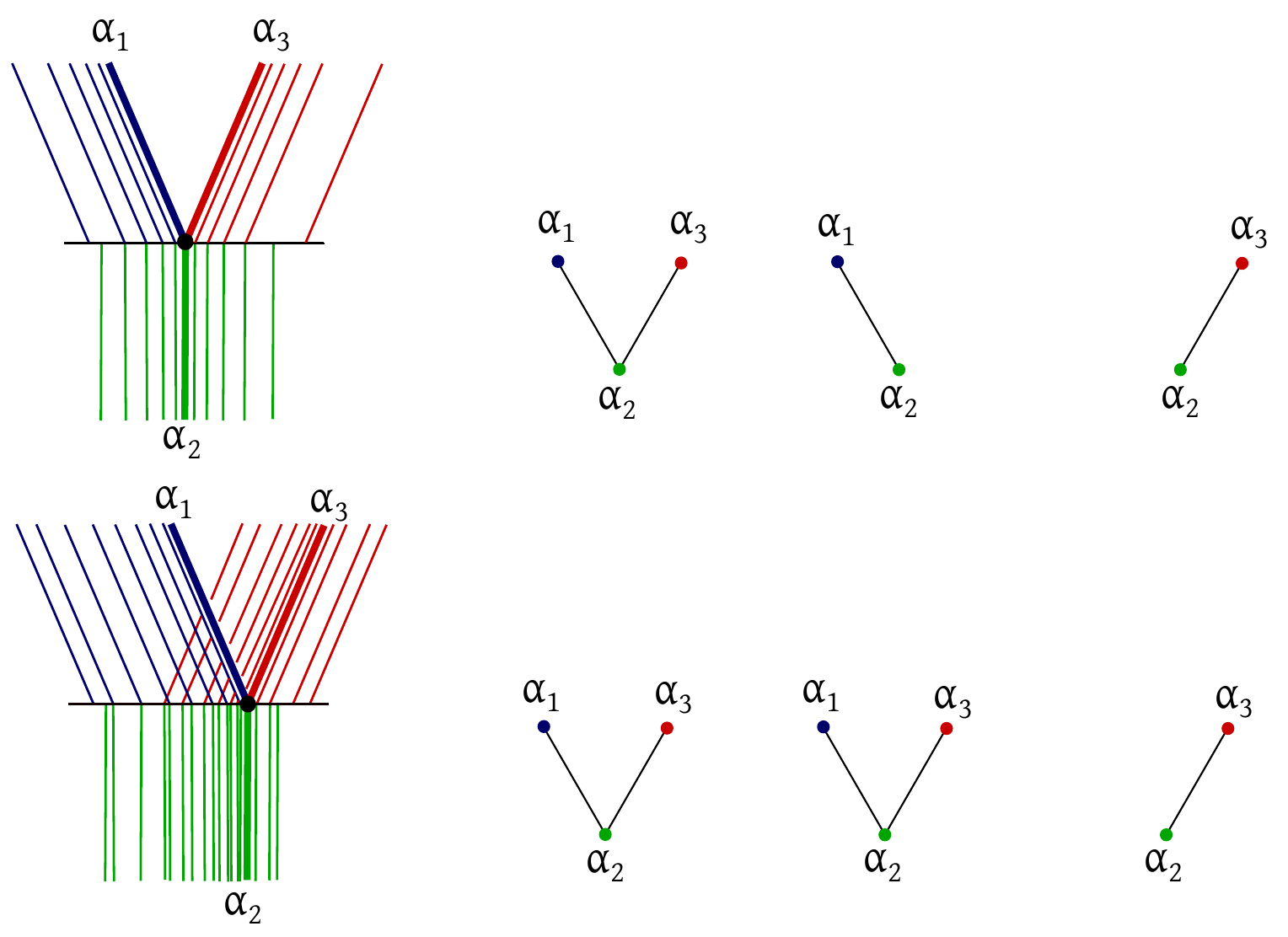}    
     \put(-5,62){$\cB$}
     \put(-5,25){$\cB'$}
     \put(1,54){$d_1$}
     \put(1,18){$d_1$}
     \put(26,54){$d_2$}
     \put(26,18){$d_2$}
     \put(13.5,56.7){$x$}
     \put(22,15){$x$}
     \put(42,60){$\Wh(x)$}
     \put(63,60){$\Wh(x,d_1)$}
     \put(85,60){$\Wh(x,d_2)$}
     \put(42,23){$\Wh(x)$}
     \put(62,23){$\Wh(x,d_1)$}
     \put(85,23){$\Wh(x,d_2)$}     
     \end{overpic}
     \caption{{\small On the left are local pictures of band complexes $\cB$ and $\cB'$; the colored lines represent leaves of the lamination. To the right of each complex is drawn $\Wh(x)$, the {\it ideal Whitehead graph} at $x$, and $\Wh(x,d_i)$, the {\it directional Whitehead graph} at $x$ in direction $d_i$. Note that if the bands overlap, then a directional Whitehead graph may contain more than one edge. 
     }}
     \label{fig_bands_WH}
    \end{figure}

  \begin{remark}
    In the above definition, $\lim_{i \rightarrow \infty} x_i =  x$ since $\cQ$ is continuous with respect to the observers' topology by Theorem~\ref{thm:Q_map}.
  \end{remark}

  \begin{lemma} \label{lemma_WH_same}
    Let $T$ be a very small, indecomposable $F_n$-tree. For every $x \in T$ and direction $d$ at $x$ there exists a compact subtree $K \subset \overline{T}$ and a reduced system of partial isometries $S = (K,A)$ so that $x \in K$, the subtree $K$ contains a germ in the direction $d$, and, 
    \[ \Wh_{T}(x,d) = \Wh_{(K,A)}(x,d). \]
  \end{lemma}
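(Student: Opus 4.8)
The plan is to produce the system of partial isometries via Construction~\ref{const:system}, but applied with a carefully chosen basis of $F_n$ so that the relevant leaves all lie in the associated unit cylinder and the point $x$ together with a germ in the direction $d$ lies in the compact heart. Recall that for a basis $A$, the construction produces $\Omega_A = \cQ(L(T)\cap C_A)$ and takes $K_A$ to be its convex hull in $\overline T$; by Theorem~\ref{compactHeart} we have $L(T)\cap C_A = L(\cB)$ under the identification of leaves with bi-infinite reduced words, and by Proposition~\ref{prop_reduced}-type reasoning (more precisely \cite{coulboishilion}) the system is reduced. So the real content is a bookkeeping statement: once the basis is chosen well, the two combinatorial definitions \ref{def_dirWH} and \ref{def_dirWH_band} literally describe the same set of edges.

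First I would fix the branch point $x$ and direction $d$, and choose a leaf $\ell=(Y,Y')\in L(T)$ realizing an edge of $\Wh_T(x,d)$ together with an approximating sequence $\ell_i\to\ell$ with $\cQ(\ell_i)\in d$; this exists whenever $\Wh_T(x,d)$ is nonempty, and if it is empty there is nothing to prove (one still needs $K$ to contain $x$ and a germ in direction $d$, which is easy to arrange). Using indecomposability of $T$ (so $F_n$ acts with dense orbits and Construction~\ref{const:system} applies) and Remark~\ref{translatingSysPartialIsom}, I would translate by a suitable $g\in F_n$: since $C_{gAg^{-1}}=gC_A$ and $\Omega_{gAg^{-1}}=g\Omega_A$, choosing $g$ and the initial basis amounts to choosing the unit cylinder freely among $F_n$-translates. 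Concretely, I want a cylinder $C$ with $\ell\in L(T)\cap C$, with cofinitely many $\ell_i\in C$, and such that the convex hull of $\cQ(L(T)\cap C)$ contains both $x=\cQ(\ell)$ and a nondegenerate germ in the direction $d$; the latter is guaranteed because the $\cQ$-images of the $\ell_i$ accumulate on $x$ from inside $d$, so the heart, being the convex hull of $\Omega$, contains segments from $x$ into $d$. One should also make sure every leaf realizing an edge of $\Wh_T(x,d)$ — there are only finitely many, since each corresponds to a component of $T\smallsetminus\{x\}$ hitting the finitely many directions at the branch point $x$ — lies in the chosen cylinder; this is where a little care is needed, and is most transparent if one first passes to a basis making $x$ a vertex-type point with all relevant directions "visible" in the $A$-word expansions.

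With the basis fixed, I would then unwind definitions. Set $S=(K_A,A)$ and let $\cB$ be its band complex. By Theorem~\ref{compactHeart}(2), leaves of $L(\cB)$ based at a point $y\in K_A$ correspond exactly to pairs $(\xi,\eta)\in L(T)\cap C_A$ with $\cQ(\xi)=\cQ(\eta)=y$; and convergence $\xi_i\to\xi$, $\eta_i\to\eta$ of the $A$-words is exactly convergence of $\ell_i\to\ell$ in $\partial^2 F_n$. The condition "$\cQ(\ell_i)\in d$" on the $T$-side becomes "the base point $x_i$ of the leaf lies in the direction $d$ of $K_A$ at $x$" on the band-complex side, using that $\cQ$ is continuous for the observers' topology (so $x_i=\cQ(\ell_i)\to x$) and that $K_A$ inherits directions at $x$ from $T$ — here one must check the germ at $x$ in direction $d$ of $K_A$ coincides with the germ of the direction $d$ in $T$, which holds by the choice ensuring $K_A$ contains such a germ. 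The "edges share a vertex iff the leaves are asymptotic" clause is the same statement read on either side. Hence $\Wh_T(x,d)=\Wh_{(K_A,A)}(x,d)$, and reducedness of $S$ is cited from \cite{coulboishilion} (or follows as in Proposition~\ref{prop_reduced} since $T$ is indecomposable, hence mixing).

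The main obstacle I anticipate is the choice of basis: I need a single unit cylinder that simultaneously captures the finitely many edge-leaves at $x$ and, for each, an approximating sequence of leaves with $\cQ$-image in $d$, and whose heart sees the germ in direction $d$. Since any particular leaf lies in $C_{gAg^{-1}}$ for a suitable $g$ and translates of cylinders cover $\partial^2 F_n$, existence is not in doubt, but making the argument clean likely uses the mixing property (to "spread out" a segment transverse to $d$ across the heart) together with a change of basis that realizes $x$ as a vertex and the direction $d$ as an edge-germ. Everything else — the identifications via Theorem~\ref{compactHeart}, continuity of $\cQ$, and reducedness — is essentially a transcription once the cylinder is in hand.
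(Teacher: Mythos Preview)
Your approach is exactly the paper's: build $S=(K_A,A)$ via Construction~\ref{const:system} with a translated basis so that the leaf $\ell$ and cofinitely many of its approximants $\ell_i$ land in the open set $C_A$, use $L(T)\cap C_A=L(\cB)$ (Theorem~\ref{compactHeart}) to transfer the edge to $\Wh_{(K,A)}(x,d)$, and observe that the reverse inclusion $\Wh_{(K,A)}(x,d)\subseteq\Wh_T(x,d)$ is automatic. You are in fact more scrupulous than the paper on one point: the paper fixes a \emph{single} edge $e$, chooses $A$ so that the corresponding leaf lies in $C_A$ (``a translation of the original basis will do the trick''), and never returns to check that the remaining edges of $\Wh_T(x,d)$ also lie in $C_A$; you correctly flag this ``one cylinder for all edges'' issue as the main obstacle.

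Two small corrections to your sketch. First, your finiteness argument is not right: edges of $\Wh_T(x,d)$ are leaves $(Y,Y')$ with $\cQ(Y)=\cQ(Y')=x$, and they do not correspond to directions at $x$ in $T$. Finiteness holds simply because $\cQ^{-1}(x)$ is a finite set (of cardinality at most $2n$; see \cite{GJLL,coulboishilion}), so there are finitely many candidate pairs. Second, placing all such leaves into a single $C_A$ by translation alone would force the bi-infinite geodesics in the Cayley tree joining the points of $\cQ^{-1}(x)$ to share a common vertex, which is not obviously true in general; your suggestion to change the basis (not just translate) is the right instinct, but this step is not fully justified in either your sketch or the paper. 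For the paper's actual applications (Lemma~\ref{DWvsTLevitt}) only two edges sharing a vertex are needed, and two asymptotic leaves always fit in one unit cylinder since their Cayley-tree geodesics share a ray.
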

  \begin{proof}
    Let $e$ be an edge of  $\Wh_{T}(x,d)$. By Definition~\ref{def_dirWH}, there exists a leaf $\ell \in L(T)$ corresponding to $e$ so that $\cQ(\ell) = x$, and there exists a sequence $\{\ell_i\}_{i=1}^{\infty} \subset L(T)$ so that $\cQ(\ell_i)=x_i \in d$ and $\lim_{i \rightarrow \infty} \ell_i = \ell$. 
    Let $A$ be a basis of $F_n$ so that $\ell \in C_A$ (a translation of the original basis will do the trick). Since $C_A$ is an open neighborhood of $\ell \in \partial^2 F_n$ there exists an $N \in \N$ so that for all $i>N$, $\ell_i \in C_A$. We truncate the first $N$ elements of the sequence $\{\ell_i\}$. Thus, $x = \cQ_-(\ell) \in \Omega_A$ and $x_i = \cQ_-(\ell_i) \in \Omega_A$. The space $K_A$ is the convex hull of $\Omega_A$ in $\overline{T}$, so $\{x_i\}_{i=1}^\infty \cup \{x\} \subset K_A$. Hence, $K_A$ contains $x$ and the germ corresponding to the direction $d$. By Theorem~\ref{compactHeart}, $L(T) \cap C_A = L(\cB)$. Therefore, $\ell_i \in L(\cB)$ for all $i$, and converges to $\ell \in L(\cB)$. Hence, there is a corresponding edge in $\Wh_{(K,A)}(x,d)$. 
        
    As for the other direction, an edge $e$ in $\Wh_{(K,A)}(x,d)$ corresponds to a leaf $\ell \in L(\cB) = C_A \cap L(T)$ based at $x$ such that there exist leaves $\ell_i \in L(\cB)$ based at $x_i \in d$ so that $\lim_{i \to \infty} \ell_i = \ell$. Thus, $\cQ(\ell_i) = x_i$, $\cQ(\ell)=x$, and  hence $\ell$ corresponds to an edge of $\Wh_{T}(x,d)$. \qedhere

    \end{proof}

\section{The $T_{\pm}$-pattern and $K_{3,3}$ subcomplexes.}\label{sec_k33s}

  \begin{notation}
   Throughout this section, suppose that $\phi \in \out$ is fully irreducible, and let $T_+$ and $T_-$ denote the attracting and repelling trees for $\phi$, respectively.
  \end{notation}
  
An illustration of the following definition appears in Figure \ref{pattern}.

\begin{figure}[b]
     \begin{overpic}[scale=1,tics=5]{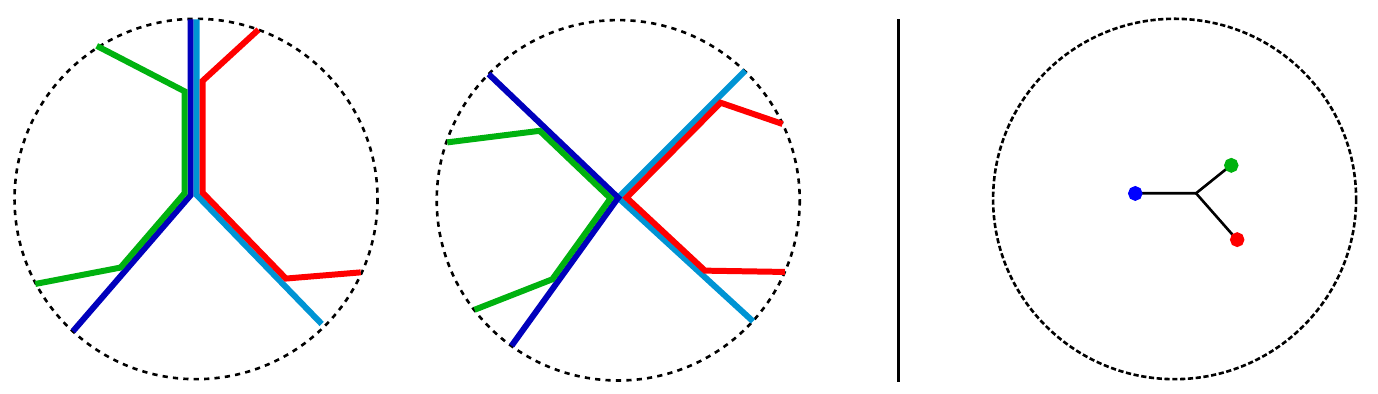}  
     \put(28,25){$\widehat{T}_+$}
     \put(68,25){$\widehat{T}_-$}
     \put(80,20){$\overline{T}_-$}
     \put(79.7,14){$a$}
     \put(90.5,16.8){$b$}
     \put(90.7,9.5){$c$}
     \put(4,10){$\ell_b^+$}
     \put(8,5){$\ell_1^+$}
     \put(18,5){$\ell_2^+$}
     \put(23,10){$\ell_c^+$}
     \put(33,15.5){$\ell_b^+$}
     \put(38,22){$\ell_1^+$}
     \put(48.5,22){$\ell_2^+$}
     \put(54,17){$\ell_c^+$}
     \end{overpic}
     \caption{{\small The $T_\pm$-pattern. The right side of the figure is contained in~$\overline{T}_-$. The configuration in $\widehat{T}_+$ is either like the left side or the middle of the figure depending on whether $(\ell_1^{+} \cap \ell_b^{+}) \cup (\ell_2^{+} \cap \ell_c^{+}) \subset T_+$ in Definition \ref{def_Tpic} has 3 or 4 prongs.}}
     \label{pattern}
    \end{figure}

  \begin{defn} \label{def_Tpic}
    The outer automorphism $\phi$ {\it satisfies the $T_\pm$-pattern} if the following holds. There exists a point $a \in T_-$ such that $|\cQ_-^{-1}(a)| \geq 3$, and there exists a direction $d$ at $a$ containing two points $b,c \in d$ with 
    \[ |\cQ_-^{-1}(b)|=|\cQ_-^{-1}(c)|=2, \]
    which have the following properties. There exist leaves $\ell_1, \ell_2, \ell_b, \ell_c \in L(T_-)$ such that 
    \[
    \begin{array}{l}
    \cQ_-( \ell_1) = \cQ_-( \ell_2) = a \\
    \cQ_-( \ell_b) =b \quad \quad \cQ_-( \ell_c) =c,
    \end{array} 
    \]
    and $(\ell_1^{+} \cap \ell_b^{+}) \cup (\ell_2^{+} \cap \ell_c^{+}) \subset T_+$ is a star with midpoint $y \in \Int(\ell_1^{+} \cap \ell_b^{+}) \cap \Int(\ell_2^{+} \cap \ell_c^{+}) $. 
  \end{defn}

  \begin{remark}
    Suppose $\phi \in \Out(F_n)$ can be represented by an automorphism of $F_n$ which is induced by a pseudo-Anosov homeomorphism of a surface with negative Euler characteristic and non-empty boundary. Then, $\phi$ does not satisfy the $T_{\pm}$-pattern. Indeed, the leaves of the attracting and repelling laminations can be realized as (non-crossing) embedded lines in $\H^2$ with dual trees $\widehat{T}_+$ and $\widehat{T}_-$, respectively. In this case, a direction at a point $a \in T_-$ corresponds to a half-space in $\H^2$. We leave the details to the reader.     
  \end{remark}

\begin{figure}
     \begin{overpic}[scale=1,tics=5]{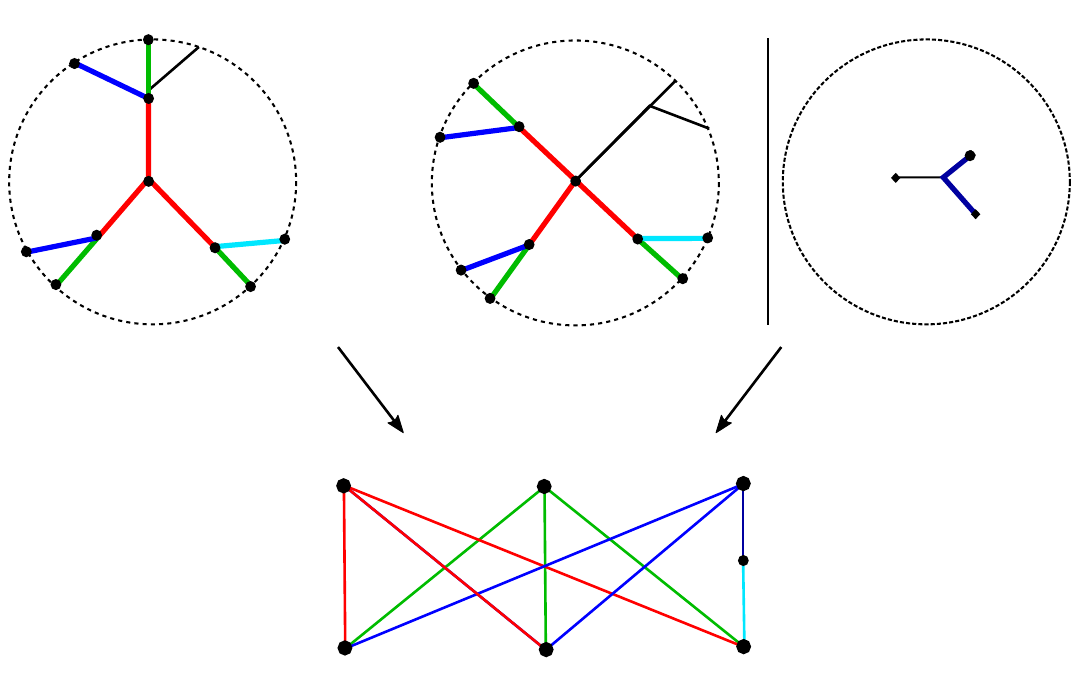}  
	\put(15,10){$\p G_{\phi}$}
	\put(30,60){$\widehat{T}_+$}
	\put(74,60){$\widehat{T}_-$}
	\put(28,27){$\cR_+$}
	\put(72,27){$\cR_-$}
	\put(30,0){$\pi_1$}
	\put(49,0){$\pi_2$}
	\put(68,0){$\pi_3$}
	\put(49,20.5){$\alpha$}
	\put(30,20.5){$\Upsilon$}
	\put(68,20.5){$\beta$}
	\put(70,11){$\gamma$}
	\put(12,61){\Small{$A_1$}}
	\put(2,35){\Small{$A_2$}}
	\put(24,35){\Small{$A_3$}}
	\put(4,59){\Small{$B_1$}}
	\put(-1.8,39){\Small{$B_2$}}
	\put(19,60){\Small{$C_1$}}
	\put(27,40){\Small{$C_2$}}
	\put(15,53){\Small{$p_1$}}
	\put(10,40){\Small{$p_2$}}
	\put(19,43){\Small{$p_3$}}
	\put(11,47){\Small{$y$}}
	\put(39.5,56){\Small{$A_1$}}
	\put(42,33){\Small{$A_2$}}
	\put(63.5,36){\Small{$A_3$}}
	\put(63,57){\Small{$A_4$}}
	\put(36,50){\Small{$B_1$}}
	\put(38,38){\Small{$B_2$}}
	\put(66,51){\Small{$C_1$}}
	\put(66,40){\Small{$C_2$}}
	\put(49,52){\Small{$p_1$}}
	\put(50,39){\Small{$p_2$}}
	\put(59,43){\Small{$p_3$}}
	\put(50,46){\Small{$y$}}
	\put(80,47){\Small{$a$}}
	\put(91,50){\Small{$b$}}
	\put(91,42){\Small{$c$}}	
	\end{overpic}
     \caption{{\small An embedding of $K_{3,3}$ in $\p G_{\phi}$ built using the $T_{\pm}$-pattern. 
     The points $\pi_i$ and $\Upsilon$ are the images of $p_i$ and $y$ under the map $\cR_+$, respectively. In addition, $\alpha = \cR_+(A_i) = \cR_-(a)$, $\beta = \cR_+(B_i) = \cR_-(b)$, and $\gamma = \cR_+(C_i) = \cR_-(c)$. }}
     \label{k33inBoundary}
    \end{figure}
    
  \begin{prop}\label{patternImpliesK33}
    If $\phi$ satisfies the $T_\pm$-pattern (or the $T_\mp$-pattern), then there exists an embedding of $K_{3,3}$ in $\p G_\phi$, where $K_{3,3}$ denotes the complete bipartite graph with two vertex sets of size three.
  \end{prop}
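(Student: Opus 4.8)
The plan is to realise the $K_{3,3}$ explicitly as a union of nine arcs in $\p G_\phi$, some carried from the attracting tree by $\cR_+$ and some carried from the repelling tree by $\cR_-$, and then to show these arcs are embedded and pairwise disjoint away from their endpoints, the disjointness of ``mixed'' pairs being immediate from Proposition~\ref{prop:CT_splits}. One colour class will be $\{\alpha,\beta,\gamma\}$ with $\alpha=\cR_-(a)$, $\beta=\cR_-(b)$, $\gamma=\cR_-(c)$; these are distinct because $a,b,c$ are distinct and $\cR_-|_{\oT_-}$ is injective. The bridge between the two trees is the observation that if $X\in\p F_n$ is an endpoint of any of the leaves $\ell_1,\ell_2,\ell_b,\ell_c$ of the $T_\pm$-pattern, then $\cQ_+(X)\in\pT_+$ by Lemma~\ref{endsToBndry}, and $\cR_+(\cQ_+(X))=\hi(X)=\cR_-(\cQ_-(X))$; thus $\cR_+$ sends the $\cQ_+$-images of the endpoints of $\ell_1$ and of $\ell_2$ all to $\alpha$, those of $\ell_b$ to $\beta$, and those of $\ell_c$ to $\gamma$.

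For the repelling-tree part, let $e=a\wedge b\wedge c$ be the median in $T_-$. Since $b,c$ lie in a common direction at $a$, the point $e$ lies in that direction, and $[e,a]$, $[e,b]$, $[e,c]$ form a tripod in $\oT_-$ (in the degenerate situation where $e$ coincides with $b$ or $c$, one replaces $e$ by a nearby interior point of the relevant arc). Set $\pi_3:=\cR_-(e)$. Because $\cR_-$ is injective on $\oT_-$, this produces three arcs $\cR_-([e,a])$, $\cR_-([e,b])$, $\cR_-([e,c])$ from $\pi_3$ to $\alpha,\beta,\gamma$ respectively, pairwise meeting only at $\pi_3$.

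For the attracting-tree part, let $\mathcal Y\subset\hT_+$ be the closure of $\ell_1^+\cup\ell_2^+\cup\ell_b^+\cup\ell_c^+$; this is a finite subtree whose extremal points lie in $\pT_+$, and it contains the star $(\ell_1^+\cap\ell_b^+)\cup(\ell_2^+\cap\ell_c^+)$ with centre $y$. Since each of the two overlap segments has $y$ in its interior and their union is a star at $y$, the point $y$ has at least three directions in $\mathcal Y$; a short case analysis (according to whether this star has three or four prongs, equivalently whether $\ell_1$ and $\ell_2$ share an endpoint) determines the combinatorial type of $\mathcal Y$. In every case $\mathcal Y$ is a tree with centre $y$, a level of branch points $z_1,z_2,\dots$ at the endpoints of the two overlap segments, and leaves in $\pT_+$ each lying over $a$, over $b$ or over $c$. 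Reading off the arcs of $\mathcal Y$ joining the $z_i$ to the leaves and joining the $z_i$ to one another through $y$, and pushing them forward by $\cR_+$ — which is injective on $\oT_+$, collapses the leaves over $a$ (resp.\ over $b$, over $c$) to $\alpha$ (resp.\ $\beta$, $\gamma$), does not otherwise identify points of $\mathcal Y$, and carries no point of $\oT_+$ onto $\alpha,\beta,\gamma$ by Proposition~\ref{prop:CT_splits}(a) — yields a finite subgraph $\Gamma_+\subset\p G_\phi$. I then take the remaining two vertices $\pi_1,\pi_2$ to be the $\cR_+$-images of two carefully chosen branch points of $\mathcal Y$: each of $\pi_1,\pi_2$ receives two of its three arcs to $\{\alpha,\beta,\gamma\}$ directly as edges of $\Gamma_+$, and its third arc as a detour passing through $\cR_+(y)$ and through the other branch points of $\Gamma_+$, the two detours being chosen internally disjoint.

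It remains to verify that the resulting nine arcs constitute an embedded $K_{3,3}$ — in fact a subdivision of $K_{3,3}$, which is homeomorphic to $K_{3,3}$. Two arcs both coming from $\mathcal Y$ are compared using the tree structure of $\mathcal Y$ and the injectivity of $\cR_+$ on $\oT_+$, so they meet only at images of common points of their preimage arcs, i.e.\ at prescribed vertices. The three $T_-$-arcs meet pairwise only at $\pi_3$ since $[e,a],[e,b],[e,c]$ is a tripod in $\oT_-$. Finally a $T_+$-arc and a $T_-$-arc can meet only at a common vertex lying in $\{\alpha,\beta,\gamma\}$: the interior of a $T_+$-arc lies in $\cR_+(\oT_+)$, the interior of a $T_-$-arc lies in $\cR_-(\oT_-)$, and $\cR_+(\oT_+)\cap\cR_-(\oT_-)=\emptyset$ by Proposition~\ref{prop:CT_splits}(b), while $\alpha,\beta,\gamma\in\cR_+(\pT_+)$ are interior to no $T_+$-arc by part~(a) and are endpoints, not interior points, of the $T_-$-arcs by injectivity of $\cR_-$ on $\oT_-$. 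The $T_\mp$-pattern case is identical with the roles of $T_+$ and $T_-$ reversed. I expect the main obstacle to be precisely this last bookkeeping step: carrying out the case analysis of the shape of $\mathcal Y$ and checking in each case that the detour arcs for $\pi_1$ and $\pi_2$ can be routed through the branch points of $\Gamma_+$ and through $\cR_+(y)$ without colliding, so that $\Gamma_+$ together with the $T_-$-tripod really does contain the required $K_{3,3}$-subdivision with pairwise interior-disjoint arcs; by contrast, the fact that arcs coming from the two different trees cannot accidentally cross is immediate from Proposition~\ref{prop:CT_splits}.
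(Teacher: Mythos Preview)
Your strategy is the same as the paper's: build the $K_{3,3}$ from arcs in $\cR_+(\hT_+)$ and $\cR_-(\hT_-)$, using Proposition~\ref{prop:CT_splits} both for the identifications at $\p T_+$ and for interior disjointness. The paper's proof is essentially a reference to Figure~\ref{k33inBoundary}, but the picture there uses a different vertex assignment from yours. In the paper the two colour classes are $\{\pi_1,\pi_2,\pi_3\}$ and $\{\Upsilon,\alpha,\beta\}$, where $\Upsilon=\cR_+(y)$ and \emph{all three} $\pi_i$ are $\cR_+$-images of the endpoints $p_1,p_2,p_3$ of the star $(\ell_1^+\cap\ell_b^+)\cup(\ell_2^+\cap\ell_c^+)$; the point $\gamma$ appears only as a subdivision vertex on the edge $\beta$--$\pi_3$, and the sole $T_-$ contribution is the single segment $[b,c]$. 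This choice makes the bookkeeping you worry about disappear: the star at $y$ supplies the three $\Upsilon$--$\pi_i$ edges for free, each $p_i$ carries an $A$-leaf giving the three $\alpha$--$\pi_i$ edges, two of the $p_i$ carry $B$-leaves, and the remaining $\beta$--$\pi_i$ edge is routed through $\gamma$ via $[b,c]\subset T_-$. Your variant (taking $\pi_3$ to be the median of $a,b,c$ in $T_-$, and a full tripod in $T_-$) also works, at the cost of the detour-routing inside $\mathcal Y$ that you anticipate.

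One genuine slip: your treatment of the degenerate median is not right. If $e$ coincides with $b$ (i.e.\ $b\in[a,c]$), then for any nearby $e'$ two of the three arcs $[e',a],[e',b],[e',c]$ overlap, so you do not get a tripod. The paper's choice is immune to this because it only uses $[b,c]$, and since $b,c$ lie in a common direction at $a$ one has $a\notin[b,c]$ automatically. The easiest fix in your setup is to fall back on the paper's assignment in this case, or to observe that in the proof of Lemma~\ref{paternIffDw} the points $b,c$ arise as $\cQ_-(\sigma_k),\cQ_-(\rho_k)$ for large $k$ and may be chosen to avoid collinearity.
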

  \begin{proof}
      Suppose $\phi$ satisfies the $T_{\pm}$-pattern. An embedding of $K_{3,3}$ in $\p G_{\phi}$ is illustrated in Figure \ref{k33inBoundary} and is described as follows. Recall from Section \ref{sec:CTmaps} that the Cannon--Thurston map $\hi: \p F_n \rightarrow \p G_{\phi}$ factors through the maps $\cQ_+:\p F_n \rightarrow T_+$ and $\cQ_-:\p F_n \rightarrow \p T_-$. The induced maps $\cR_+:T_+ \rightarrow \p G_{\phi}$ and $\cR_-:T_- \rightarrow \p G_{\phi}$ are embeddings restricted to $\Int(T_+)$ and $\Int(T_-)$, and the images of $\Int(T_+)$ and $\Int(T_-)$ in $\p G_{\phi}$ are disjoint. Thus, the images of the interiors of the colored paths in $\hT_+$ drawn in Figure \ref{pattern} do not intersect in $\p G_{\phi}$. The endpoints of these paths on $\p T_+$ are identified, via $\cR_+$ and $\cR_-$, to points in the interior of $T_-$ to form a $K_{3,3}$.
  \end{proof}

  \begin{lemma}\label{paternIffDw}
    An outer automorphism $\phi$ satisfies the $T_{\pm}$-pattern if and only if there exists a point $a \in T_-$ and a direction $d$ at $a$ such that $\Wh_{T_-}(a,d)$ contains at least three vertices.
  \end{lemma}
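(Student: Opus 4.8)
The plan is to recast both conditions as a single statement about $T_-$: that there exist two distinct leaves $\ell_1\neq\ell_2$ of $L(T_-)$ with $\cQ_-(\ell_1)=\cQ_-(\ell_2)=a$, each of which is a limit in $\p^2F_n$ of leaves of $L(T_-)$ whose $\cQ_-$-images lie in $d$; call a leaf with these two properties \emph{$(a,d)$-approximable}. The Whitehead-graph side of the equivalence is then immediate from Definition~\ref{def_dirWH}: every vertex of $\Wh_{T_-}(a,d)$ is an endpoint of an edge and two distinct edges meet in at most one vertex, so having at least three vertices is the same as having at least two distinct edges, i.e.\ two distinct $(a,d)$-approximable leaves at $a$. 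I would record two facts for repeated use. First, two distinct leaves at $a$ force $|\cQ_-^{-1}(a)|\geq3$, and conversely $|\cQ_-^{-1}(a)|\geq3$ forces $a$ to be a branch point of $T_-$ (distinct $\cQ_-$-preimages of an interior point determine distinct directions there). Second, writing $\cQ_-^{-1}(a)=\{P_1,\dots,P_k\}$, consecutive leaves of the chain $(P_1,P_2),(P_2,P_3),\dots$ are asymptotic, so by Proposition~\ref{fiberIsStar} the union $S_a$ of all realizations $(P_iP_j)^+$ is a finite star in $T_+$ with a single branch point $y_a$; and since $\cQ_+$ is injective on $\cE L(T_-)$ (Theorem~\ref{thm:Q_map} and Lemma~\ref{endsToBndry}), distinct leaves at $a$ have distinct realizations, any two of which, both passing through $y_a$, agree in at most one direction at $y_a$.

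For the implication from ``$\Wh_{T_-}(a,d)$ has at least three vertices'' to the $T_\pm$-pattern: given $(a,d)$-approximable $\ell_1\neq\ell_2$, I would take an approximating sequence for $\ell_1$ and pass to one leaf $\ell_b\in L(T_-)$ lying close to $\ell_1$ with $\cQ_-(\ell_b)=:b\in d$, arranging --- by a small perturbation of its endpoints, which keeps $\cQ_-$ in $d$ by continuity in the observers' topology --- that $|\cQ_-^{-1}(b)|=2$ exactly; this is legitimate because the points of $T_-$ with at least three preimages form a countable set. Produce $\ell_c,c$ likewise from $\ell_2$; then $b\neq c$, as $\ell_b,\ell_c$ cannot both be close to $\ell_1\neq\ell_2$. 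Proposition~\ref{PropOfRelazationsOfLam} converts $\ell_b\to\ell_1$ and $\ell_c\to\ell_2$ into Hausdorff convergence $\ell_b^+\to\ell_1^+$ and $\ell_c^+\to\ell_2^+$ in $T_+$, so choosing $\ell_b,\ell_c$ far enough along their sequences puts $y_a$ in $\Int(\ell_1^+\cap\ell_b^+)\cap\Int(\ell_2^+\cap\ell_c^+)$; both arcs lie in $S_a$, the first uses the two prong-directions of $\ell_1^+$ at $y_a$ and the second those of $\ell_2^+$, and $\ell_1^+\neq\ell_2^+$, so their union is a star with midpoint $y:=y_a$ and three or four prongs. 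The tuple $(a,d,b,c,\ell_1,\ell_2,\ell_b,\ell_c,y)$ then witnesses the $T_\pm$-pattern.

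For the converse I expect the real work. From the $T_\pm$-pattern: the star has at least three prongs and $y$ is interior to both arcs, so $\ell_1^+\neq\ell_2^+$ near $y$, hence $\ell_1\neq\ell_2$; and $a$ is a branch point, so $S_a$, $y_a$ are defined. The crucial step is to build an automorphism $\Phi$ representing a positive power $\phi^p$ such that the homothety $h_-=h_-^\Phi\colon T_-\to T_-$ fixes $a$ and the direction $d$ and $\partial\Phi$ fixes $\cQ_-^{-1}(a)$ pointwise: for the first point, apply Lemma~\ref{lem_prin_aut} to $\phi^{-1}$ (whose attracting tree is $T_-$) at the branch point $a$ and invert, then raise to a further power to obtain the other two (finitely many directions at $a$; finite fibre $\cQ_-^{-1}(a)$). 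Because $\Phi$ represents a positive power of $\phi$, $h_-$ is a contraction while the companion homothety $h_+=h_+^\Phi$ on $T_+$ is an expansion (Lemma~\ref{lem_prin_aut}); by Lemma~\ref{Qandh}, $h_+$ fixes each $\cQ_+(P_i)$, hence fixes $\ell_1^+$, $\ell_2^+$ and the star $S_a$, hence fixes $y_a$, which is then its unique fixed point. Moreover $y=y_a$: otherwise $y$ lies on a prong of $S_a$ off $y_a$, forcing $\ell_1^+$ and $\ell_2^+$ to coincide near $y$ and the pattern's union to have only two prongs at $y$. Now I would set $m_i:=\partial\Phi^i(\ell_b)\in L(T_-)$ ($L(T_-)$ is $\partial\Phi$-invariant by Lemma~\ref{Qandh}): then $\cQ_-(m_i)=h_-^i(b)$ stays in $d$ and converges to $a$ (contraction fixing $a$, preserving $d$), while $m_i^+=h_+^i(\ell_b^+)\supseteq h_+^i(\ell_1^+\cap\ell_b^+)$, and since $\ell_1^+\cap\ell_b^+$ is a nondegenerate subarc of the $h_+$-invariant line $\ell_1^+$ with the fixed point $y$ in its interior, these images exhaust $\ell_1^+$, so $m_i^+\to\ell_1^+$ in the Hausdorff topology. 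Finally $m_i\to\ell_1$: a subsequential limit $\tau$ of $\{m_i\}$ in the compact space $\p F_n\times\p F_n$ avoids the diagonal (the realizations contain arbitrarily long arcs of $\ell_1^+$), lies in $L(T_-)$, satisfies $\tau^+=\ell_1^+$ by the forward half of Proposition~\ref{PropOfRelazationsOfLam}, hence $\tau=\ell_1$ by injectivity of $\cQ_+$ on $\cE L(T_-)$. So $\ell_1$ is $(a,d)$-approximable, and the same argument with $\ell_c$ (using $y\in\Int(\ell_2^+\cap\ell_c^+)$ and $h_+$-invariance of $\ell_2^+$) shows $\ell_2$ is too; thus $\Wh_{T_-}(a,d)$ has two distinct edges.

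The main obstacle is the construction in the converse: producing one automorphism whose two homotheties simultaneously fix $a$ in $T_-$ and the star midpoint $y$ in $T_+$, and verifying $y=y_a$ --- this is precisely where the coupling of the attracting and repelling trees enters. The only other delicate point is the genericity perturbation in the first implication ensuring $|\cQ_-^{-1}(b)|=|\cQ_-^{-1}(c)|=2$ rather than merely $\geq 2$.
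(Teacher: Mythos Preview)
Your proof is correct and follows the paper's overall strategy in both directions, but in the converse you build the key automorphism from the opposite side. The paper observes directly that the star midpoint $y$ is a branch point of $T_+$ (it has at least three directions by hypothesis), applies Lemma~\ref{lem_prin_aut}(3) to $\phi$ at $y$ to obtain a principal $\Phi$ with $h_+(y)=y$, and then reads off $h_-(a)=a$ from Lemma~\ref{Qandh} since $\partial\Phi$ fixes the endpoints of $\ell_1$; no auxiliary star $S_a$ and no verification that $y=y_a$ is needed. Your route via $\phi^{-1}$ at $a$ also works, but your parenthetical ``distinct $\cQ_-$-preimages of an interior point determine distinct directions there'' is not a correct justification that $a$ is a branch point of $T_-$; that fact rather comes from Lemma~\ref{asymptoticImpliesFixed} applied to $\phi^{-1}$ (yielding a principal automorphism fixing $\cQ_-^{-1}(a)$, whose homothety on $T_-$ has $a$ as its unique fixed point, hence a branch point by Lemma~\ref{lem_prin_aut}(2),(3)). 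In the forward direction your argument matches the paper's; your perturbation step to force $|\cQ_-^{-1}(b)|=2$ addresses a detail the paper leaves implicit, though perturbing endpoints while remaining in $L(T_-)$ requires more care than you indicate.
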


  \begin{proof}
    Suppose there exist a point $a \in T_-$ and a direction $d$ at $a$ so that $\Wh_{T_-}(a,d)$ contains three vertices, corresponding to points $\ga_1, \ga_2, \ga_3 \in \partial F_n$. By definition, $\cQ_-(\ga_i)  = a$ and each $\ga_i$ is an end of a leaf of the lamination $L(T_-)$. Thus, there exist two leaves $\ell_1 \neq \ell_2 \in L(T_-)$ so that $\{\ga_1, \ga_2, \ga_3\} \subset (\ell_1 \cup \ell_2)$. By the definition of the directional Whitehead graph, there exist sequences $\{\sig_k\}_{k=1}^{\infty},\{\rho_k\}_{k=1}^{\infty} \subset L(T_-)$ such that $\lim_{k\to \infty} \sig_k = \ell_1$, $\lim_{k\to \infty} \rho_k = \ell_2$, and $\cQ_-(\sig_k), \cQ_-(\rho_k) \in d$ for all $k$. By Proposition~\ref{fiberIsStar}, the realization in $T_+$ of the leaves corresponding to edges of a Whitehead graph is a star. Therefore, $\ell_1^+ \cap \ell_2^+$ is either a point~$y$ or a ray initiating at a point $y$. Since $\lim_{k\to \infty} \sig_k = \ell_1$, by Proposition~\ref{PropOfRelazationsOfLam} $\sig_k^+ \cap \ell_1^+$ is a non-trivial segment containing $y$ in its interior for a large enough $k$. Likewise, for large enough $k$, $\rho_k^+\cap \ell_2^+$ is a non-trivial segment containing $y$ in its interior. Set $\ell_a=\sig_k$ and $\ell_b = \rho_k$  for large enough $k$ so that $\bigl( \ell_1^{+} \cap \ell_b^{+} \bigr) \cup \bigl( \ell_2^{+} \cap \ell_c^{+} \bigr)$ is a star with $y$ its midpoint. Therefore, $\phi$ satisfies the $T_{\pm}$-pattern, concluding one direction of the proof.

    Conversely, suppose $\phi$ satisfies the $T_{\pm}$-pattern. We will show $\ell_1$ and $\ell_2$ yield distinct edges of $Wh_{T_-}(a,d)$. By Lemma \ref{lem_prin_aut}, there exists a homothety $h_+:T_+ \rightarrow T_+$ that fixes the middle $y$ of the star $(\ell_1^{+} \cap \ell_b^{+}) \cup (\ell_2^{+} \cap \ell_c^{+})$ in $T_+$. The homothety $h_+$ corresponds to a principal automorphism $\Phi$ representing $\phi$. We replace $\phi$ by a rotationless power so that $h_+$ fixes all directions at $y$ and  $\partial\Phi(\ell_j) = \ell_j$ for $j=1,2$. Let $h_-$ be the corresponding homothety of $T_-$ guaranteed by Lemma \ref{lem_prin_aut}. By  Lemma \ref{Qandh},
    \[ h_-(a) = h_-( \cQ_-( \ell_1)) = \cQ_-( \Phi (\ell_1)) = \cQ_-(\ell_1) = a, \] where $a \in T_-$ is the point given in the definition of the $T_{\pm}$-pattern. Thus, $h_-$ permutes the directions at $a$. We replace $\Phi, h_+$, and $h_-$ by powers so that $h_-$ fixes the directions at $a$. Since $h_+$ is a homothety fixing the directions at $y$, $\lim_{k \to \infty} h_+^k(\ell_b^+) = \ell_1^+$. By Lemma \ref{PropOfRelazationsOfLam},  $\lim_{k \to \infty} \Phi^k(\ell_b) = \ell_1$. Likewise, $\lim_{k \to \infty} \Phi^k(\ell_c) = \ell_2$. Recall that $b = \cQ_-(\ell_b)$ and $c = \cQ_-(\ell_c)$ belong to the same component $d$ of $T_- \setminus \{a\}$ by assumption. Since $h_-$ fixes the directions at $a$, 
    \[ \cQ_-(\Phi^k(\ell_b)) = h_-^k(\cQ_-(\ell_b)) \in d \] 
    and likewise, $\cQ_-(\Phi^k(\ell_c)) \in d$. Therefore, $\ell_1$ and $\ell_2$ yield distinct edges in $\Wh_{T_-}(a,d)$, concluding the proof.
  \end{proof}

  \begin{lemma}\label{DWvsTLevitt}
     There exists a directional Whitehead graph of a point in $T_-$ with at least three vertices if and only if the Rips Machine for $T_-$ never halts.
  \end{lemma}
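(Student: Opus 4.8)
The plan is to transfer the statement to the band complex of a compact heart of $T_-$ and then oppose Proposition~\ref{prop:levitt implies overlap} to its converse \cite[Proposition~4.3]{coulboishilion}. Two preliminary reductions. First, the two formulations of the conclusion coincide: edges of a directional Whitehead graph are leaves, hence non-degenerate pairs in $\doublebndry$ (no loops), and a pair of vertices spans at most one edge, so ``at least three vertices'' is the same as ``more than one edge.'' Second, by the proof of Lemma~\ref{lemma_WH_same} (the implication ``edge of $\Wh_{(K,A)}(x,d)$ $\Rightarrow$ edge of $\Wh_{T_-}(x,d)$'' holds for any compact-heart system, and conversely an edge of $\Wh_{T_-}(x,d)$ is an edge of $\Wh_{(K,A)}(x,d)$ for a suitable conjugate system), together with Remark~\ref{translatingSysPartialIsom} and Proposition~\ref{prop_reduced}, it suffices to decide whether $\Wh_{(K_0,A_0)}(x,d)$ can have two edges, where $S_-=(K_0,A_0)$ is the reduced compact-heart system of Construction~\ref{const:system}; since passing to another basis conjugates everything, ``the Rips Machine for $T_-$ (never) halts'' is well defined. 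Finally, $T_-$ is indecomposable (Lemma~\ref{lemma_iwipindecomposable}), hence mixing, so by the surface/Levitt dichotomy (Corollary~\ref{cor:type}, \cite[Theorem~5.2]{coulboishilion_bot}, \cite[Propositions~4.3 and 5.14]{coulboishilion}) the Rips Machine for $T_-$ never halts if and only if $T_-$ is of Levitt type.

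For the direction ``never halts $\Rightarrow$ some directional Whitehead graph has three vertices'': $T_-$ is then of Levitt type, so $\vol(K_0^{\geq 3})>0$ by Proposition~\ref{prop:levitt implies overlap}, hence (Remark~\ref{remark:vol=0}) there is a non-degenerate arc $J\subseteq K_0^{\geq 3}$ over which three bands overlap, say with distinct $z_1,z_2,z_3\in A_0^{\pm}$ defined on all of $J$. Fix $x$ interior to $J$ and a direction $d$ at $x$ running along $J$, so some sequence $x_i\to x$ has $x_i\in J\cap d$. Since $S_-$ is reduced and has no finite orbits (Remark~\ref{remark:no finite orbit}), each $x_i$ (lying in at least three domains, hence in the limit set) is the basepoint of bi-infinite reduced leaves; for $j=1,2$ choose such a leaf $\ell_i^{(j)}\in L(\cB)=L(T_-)\cap C_{A_0}$ based at $x_i$ whose two half-leaves begin, respectively, by applying $z_1$ and $z_{j+1}$ at $x_i$ (possible since $z_1,z_2,z_3$ are all defined at $x_i$ and distinct, which also gives $\ell_i^{(1)}\neq\ell_i^{(2)}$). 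By compactness of the unit cylinder (Lemma~\ref{lemma_ca_compact}), after passing to a subsequence $\ell_i^{(j)}\to\ell^{(j)}$ in $\doublebndry$; continuity of $\cQ_-$ in the observers' topology forces $\cQ_-(\ell^{(j)})=\lim_i x_i=x$, so $\ell^{(1)},\ell^{(2)}$ are leaves through $x$, each a limit of leaves whose $\cQ_-$-image lies in $d$, and $\ell^{(1)}\neq\ell^{(2)}$ because the clopen ``first letter'' sets separate them ($\ell^{(1)}$ has a half-leaf beginning with $z_2$, $\ell^{(2)}$ does not). Hence $\Wh_{(K_0,A_0)}(x,d)$, and therefore $\Wh_{T_-}(x,d)$, has two edges, i.e.\ three vertices; a posteriori $|\cQ_-^{-1}(x)|\geq 3$, so $x$ is a branch point of $\overline{T}_-$, as Definition~\ref{def_dirWH} asks.

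For the converse, argue the contrapositive. If the Rips Machine halts, then $T_-$ is pseudo-surface and $\vol(K_0^{\geq 3})=0$ by \cite[Proposition~4.3]{coulboishilion}, so by Lemma~\ref{lemma:K>3 finite} (valences in $\overline{T}_-$ being bounded, Remark~\ref{remark:no finite orbit}) $K_0^{\geq 3}$ is a finite set of points, with only finitely many directions of $K_0$ based there. The claim is that no $\Wh_{T_-}(x,d)=\Wh_{(K_0,A_0)}(x,d)$ has two edges. Indeed, two distinct admissible leaves through a common basepoint first separate at a point of band-valence $\geq 3$, that is, at a point of $K_0^{\geq 3}$; if, moreover, both are limits of leaves based at $x_i\to x$ from a fixed direction $d$, then the approximating leaves must themselves separate, at points of $K_0^{\geq 3}$ accumulating along $d$ — so at a single $y\in K_0^{\geq 3}$ along infinitely many distinct germs, contradicting finiteness, or else along a whole sub-arc of $K_0^{\geq 3}$, contradicting $\vol(K_0^{\geq 3})=0$. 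Equivalently, the dual lamination of a pseudo-surface system is of interval-exchange type, and from each direction at most one leaf can be approached. Hence a halting Rips Machine forces every directional Whitehead graph of $T_-$ to have a single edge.

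The crux is this last step — making precise that ``no positive-volume triple overlap'' forces ``at most one leaf approachable from each direction.'' I would make it rigorous by tracking the finite admissible paths witnessing the separation of $\ell$ from $\ell'$ and of the approximants $\ell_i$ from $\ell_i'$, and observing that a Rips move based at a valence-$1$ arc (present at every stage precisely when the Machine does not halt) is exactly the operation that propagates a pair of leaves separating over a fixed direction, so the halting case cannot sustain such a pair. For the Main Theorem only the forward direction (``never halts $\Rightarrow$ three vertices'') is needed, together with Proposition~\ref{prop:Levitt}, Lemma~\ref{paternIffDw}, and Proposition~\ref{patternImpliesK33}; there the only delicate point is the passage to bi-infinite leaves and their limits, handled above via reducedness and the no-finite-orbit property, compactness of $C_{A_0}$, and observers'-continuity of $\cQ_-$.
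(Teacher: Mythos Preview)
Your forward direction has a genuine gap. You invoke Proposition~\ref{prop:levitt implies overlap} only at level~$0$, extracting an arc $J\subseteq K_0^{\geq 3}$ with three partial isometries $z_1,z_2,z_3$, and then assert that through each $x_i\in J$ you can choose bi-infinite leaves with prescribed first half-letter pairs $(z_1,z_{j+1})$. Neither step is justified. First, ``lying in at least three domains, hence in the limit set'' is false: a point of $K_0^{\geq 3}$ need not survive later Rips iterations, so need not carry any bi-infinite leaf. Second, even when $x_i\in\Omega$, generically $|\cQ_-^{-1}(x_i)|=2$, so there is exactly \emph{one} bi-infinite leaf through $x_i$, and its first half-letters are whatever they are --- you cannot demand both $(z_1,z_2)$ and $(z_1,z_3)$. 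Your first-letter argument that $\ell^{(1)}\neq\ell^{(2)}$ would be fine if the approximating leaves existed, but in general they do not. The paper uses the full conclusion of Proposition~\ref{prop:levitt implies overlap}: overlap at \emph{every} level~$m$. It builds nested intervals $I_m\subseteq K_m^{\geq 3}$ with coherent partial isometries $a_m,b_m,c_m$, takes $x\in\bigcap_m I_m$, and chooses the approximating points $x_m$ as extremal points of $\dom(a_{m+1})$, so that reducedness of every $S_m$ forces $x_m\in\Omega$. The two limit leaves $\ell_1,\ell_2$ are distinguished because their half-leaves begin with the distinct sequences $\{a_m\}$ versus $\{c_m\}$ at every level --- strictly more than a level-$0$ first letter.

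For the converse you concede your argument is a sketch. The paper's route is direct rather than contrapositive, and avoids the finite-set analysis entirely: given two edges $\ell_1,\ell_2$ of $\Wh_{(K,A)}(x,d)$ with approximants $\sigma_k\to\ell_1$ and $\rho_k\to\ell_2$ from direction~$d$, write $\ell_1=\cdots b^{-1}.a\cdots$ and $\ell_2=\cdots f^{-1}.c\cdots$ in the alphabet $A_m$ for each~$m$, with $\{a,b\}\neq\{c,f\}$. For large~$k$ the same first-letter pairs appear in $\sigma_k,\rho_k$; convexity of domains then gives non-trivial arcs $[x,\cQ_-(\sigma_k)]\subseteq\dom(a)\cap\dom(b)$ and $[x,\cQ_-(\rho_k)]\subseteq\dom(c)\cap\dom(f)$ in direction~$d$, so at least three bands overlap on an arc at level~$m$. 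Hence the Rips Machine does not halt at any level.
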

  \begin{proof}
    Suppose the Rips machine never halts, and let $(K_m, A_m)$ denote the output after the $m^{th}$ iteration of the Rips machine. 
    This means that the tree $T_-$ is not of surface type, and thus, Proposition~\ref{prop:Levitt} ensures that $T_-$ is of Levitt type. In particular, if $a_m \in A_m$ (for each $m\in\N$), then $\lim_{m \rightarrow \infty}\diam(\dom(a_{m}))=0$. 
   
    By Proposition \ref{prop:levitt implies overlap}, for each $m$ there exists a compact non-trivial interval $I_m \subset K_m$ and three distinct elements $a_m, b_m, c_m \in A_m$ such that 
    $\displaystyle I_m  \subset  \dom(a_m) \cap dom(b_m) \cap dom(c_m)$. We may choose $\{I_m\}$ to be nested because for each $m$, if an interval is contained in the domain of three partial isometries in $A_m$, then it is contained in the domain of three partial isometries in $A_{m-1}$. We may also choose the sequences of partial isometries so that for each $f=a,b,c$, $f_{m}|_{K_{m+1}}=f_{m+1}$. Thus, there exists a point~$x$ with $\displaystyle x \in \bigcap_{m=1}^\infty I_m$. Moreover, because there are only finitely many directions at $x$, there exists at least one germ $d$ based at $x$, so that for each $m$, the interval $I_m$ contains a subsegment in the germ $d$.

    To find an infinite sequence of leaves, let $x_m \neq x$ be an extremal point of  $\dom(a_{m+1})$  so that the segment $[x,x_m]$ is contained in the germ $d$. By Proposition~\ref{prop_reduced}, $x_m \in \Omega$, where $\Omega$ is the limit set of the system (see Definition~\ref{def_systems}). So, there exists a leaf  $\sig_m \in L(\cB_-)$ with $\cQ_-(\sig_m) = x_m$. In the alphabet $A_0$, $\sig_m = \cdots.a_0 \cdots$ for all $m \in \N$; see Notation~\ref{notation_leaf_dot}. Since $L(\cB_-)$ is compact, there is a subsequence of $\{\sig_m\}_{m \in \N}$ that converges in $\partial^2 F_n$ to a leaf $\ell_1$. Moreover, $\diam (\dom(a_m)) \to 0$, so $\lim_{m \rightarrow \infty} x_m = x$. The map $\cQ_-$ is continuous with respect to the observers' topology, so $\cQ_-(\ell_1) = \lim_{m \rightarrow \infty} \cQ_-(\sig_m) = \lim_{m \rightarrow \infty} x_m = x$. Thus, there exists an edge $e$ in $\Wh_K(x,d)$ corresponding to $\ell_1$. Since $x_m \in \dom(a_m)$, in the alphabet $A_m$, $\sig_m = \cdots.a_m \cdots$. Thus, one half-leaf of $\ell_1$ is the limit of half-leaves which begin with $\{a_m\}_{m\in \N}$. Without loss of generality, assume that the other end of $\ell_1$ is the limit of half-leaves which begin with $\{b_m\}_{m \in \N}$. Perform the same construction with $\{c_m\}_{m \in \N}$ instead of $\{a_m\}_{m \in \N}$ to obtain a sequence $\{y_m\}_{m \in \N} \subset d$ and a sequence $\{\rho_m\}_{m \in \N} \subset \cL(\cB_-)$ so that  $\rho_m = \cdots. c_m \cdots$ in the alphabet $A_m$ and $\lim_{m \rightarrow \infty} \rho_m = \ell_2$. Then $\cQ_-(\ell_2) = x$. The leaf $\ell_2$ is the limit of half-leaves which begin with $\{c_m\}_{m \in \N}$, so $\ell_1 \neq \ell_2$. Therefore, there exists another edge $e' \neq e$ in $\Wh_{(K,A)}(x,d)$ where $e'$ corresponds to $\ell_2$, and $e$ corresponds to $\ell_1$. By Lemma~\ref{lemma_WH_same}, $\Wh_{T_-}(x,d)$ contains two edges as well. 

    Conversely, suppose $\Wh_{T_-}(x,d)$ contains three vertices. By Lemma \ref{lemma_WH_same}, there exists a system of partial isometries $(K,A)$ so that $\Wh_{(K,A)}(x,d)$ contains three vertices. So, $\Wh_{(K,A)}(x,d)$ contains two edges corresponding to leaves $\ell_1, \ell_2$ in $L(\cB_-)$. By definition, there exist sequences $\{ \sig_k \}_{k \in \N}, \{\rho_k\}_{k \in \N} \subset L(\cB_-)$ so that $\lim_{k \rightarrow \infty} \sig_k = \ell_1, \lim_{k \rightarrow \infty} \rho_k = \ell_2$, and $\cQ_-(\sig_k), \cQ_-(\rho_k) \in d$. For each $m \in \N$, there are two distinct pairs $(a,b), (c,f)$ of partial isometries $a,b,c,f \in A_m$ so that in the alphabet of $A_m$, $\ell_1 = \cdots b^{-1}.a \cdots$ and $\ell_2 = \cdots f^{-1}.c \cdots$. (Note that the pairs may share one element.) Therefore,  $x \in \dom(a) \cap dom(b) \cap dom(c) \cap dom(f)$. Let $k$ be large enough so that in the alphabet $A_m$, $\sig_k = \cdots b^{-1}.a \cdots$ and $\rho_k = \cdots f^{-1}.c \cdots$. Thus, $\cQ_-(\sig_k) \in dom(a) \cap dom(b)$ and $\cQ(\rho_k) \in dom(c) \cap dom(f)$. Since the domain of a partial isometry is convex, the arc $[\cQ_-(\sig_k),x]$ is contained in $dom(a) \cap dom(b)$, and the arc $[\cQ_-(\rho_k),x]$ is contained in $dom(c) \cap dom(f)$. Hence, the domains of $a,b,c,f$ intersect in a non-trivial arc (in the direction $d$). By \cite{coulboishilion}, there exists a free band, that is, an interval $J \in K$ and a unique partial isometry $g \in A_m$ so that $J \subset dom(g)$. Thus, for all $m \in \N$ the Rips Machine does not stop at the $m$-th step.
\end{proof}

\section{The boundary $\partial G_\phi$} \label{sec_boundary}

  \begin{Mainthm}
    If $G_\phi$ is a hyperbolic group, then $\partial G_\phi$ contains a copy of the complete bipartite graph $K_{3,3}$. 
  \end{Mainthm}

  \begin{proof}
    If $\phi$ is fully irreducible, let $T_+$ and $T_-$ be the attracting and repelling trees of $\phi$ in the boundary of $CV_n$. By Proposition~\ref{prop:Levitt}, either $T_-$ or $T_+$ has Levitt type. 
    Without loss of generality, suppose $T_-$ has Levitt type.
    By Lemma \ref{DWvsTLevitt}, some directional Whitehead graph of $T_-$ contains more than one edge. 
    By Lemma \ref{paternIffDw}, $\phi$ satisfies the $T_\pm$ pattern; hence, by Lemma \ref{patternImpliesK33}, the boundary $\partial G_\phi$ contains an embedded copy of $K_{3,3}$.  

    If $\phi$ is not fully irreducible, then there exists a free 	factor $A_0<F_n$ and a power $\phi^k$ so that $[A_0]$, the conjugacy class of $A_0$ in $F_n$, is $\phi^k$-invariant. Let $A$ be a minimal such factor. Since $G_\phi$ is hyperbolic, $\phi$ is atoroidal, hence $rank(A) \geq 2$. Moreover, $\phi$ induces a well-defined outer automorphism $\phi'$ on $A$ \cite[Fact 1.4]{hm13}. Since $\phi$ is atoroidal, so is $\phi'$, and hence $rank(A) \geq 3$. By the minimality of $A$, the outer automorphism $\phi'$ is fully irreducible. Therefore, the boundary of $G' = A \rtimes_{\phi'} \Z$ contains an embedded copy of $K_{3,3}$ by the previous paragraph. The subgroup $G'$ quasi-isometrically embeds in $G_{\phi^k}$ \cite{mitra}. Thus, $\p G'$ embeds into $\p G_{\phi^k}$. The group $G_{\phi^k}$ is a finite-index subgroup of $G_\phi$, so the boundaries of $G_{\phi}$ and $G_{\phi^k}$ are homeomorphic. 
  \end{proof}

  In what follows, we describe the work of Kapovich-Kleiner \cite{kapovichkleiner} relevant to this paper.

  \begin{thm}\cite[Theorem 4]{kapovichkleiner}\label{KKthm}
    Let $G$ be a hyperbolic group which does not split over a finite or virtually cyclic subgroup, and suppose $\partial G$ is 1-dimensional. Then one of the following holds:
  \begin{enumerate}	
    \item $\partial G$ is a Menger curve,
    \item $\partial G$ is a Sierpinski carpet,
    \item $\partial G$ is homeomorphic to $\mathbb{S}^1$, and $G$ maps onto a Schwartz triangle group with finite kernel. 
      \qed
  \end{enumerate}
  \end{thm}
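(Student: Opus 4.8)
The plan is to reconstruct the Kapovich--Kleiner argument \cite{kapovichkleiner}, which feeds the coarse geometry of $G$ into two classical topological characterizations: Anderson's characterization of the Menger curve and Whyburn's characterization of the Sierpinski carpet. First I would assemble the coarse-geometric inputs. Since $G$ is hyperbolic and does not split over a finite subgroup, Stallings' ends theorem shows that $G$ is one-ended, so $\partial G$ is a nonempty connected metrizable compactum; moreover the boundary of a one-ended hyperbolic group is locally connected \cite{bestvinamess}, so $\partial G$ is a Peano continuum. If $\partial G$ is homeomorphic to $S^1$ we are in case (3): by \cite{tukia,gabai,cassonjungreis} the group $G$ acts discretely and cocompactly by isometries on $\mathbb{H}^2$, so modulo its finite pointwise kernel $G$ is the orbifold fundamental group of a closed hyperbolic $2$-orbifold $\mathcal{O}$; since $G$ splits over no virtually cyclic subgroup, $\mathcal{O}$ carries no essential simple closed curve, which forces $\mathcal{O}$ to be a hyperbolic triangle orbifold, and hence $G$ maps onto a Schwartz triangle group with finite kernel. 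From now on assume $\partial G \not\cong S^1$; then Bowditch's theorem \cite{bowditch}, used together with the hypothesis that $G$ splits over neither a finite nor a virtually cyclic subgroup, shows that $\partial G$ has no local cut points. So $\partial G$ is a compact, connected, locally connected, one-dimensional metric space with no local cut points.

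Next I would run the topological dichotomy on this continuum. By Anderson's theorem, a space with these five properties is homeomorphic to the Menger curve precisely when it has no nonempty planar open subset; so if no nonempty open subset of $\partial G$ embeds in $S^2$ we are in case (1). Otherwise, fix a nonempty open set $U \subseteq \partial G$ that embeds in $S^2$. The crucial step is to upgrade this local planarity to global planarity of $\partial G$, and here one exploits the dynamics of the action: $G$ acts on $\partial G$ as a minimal, expanding convergence group, so appropriately chosen loxodromic elements carry cofinite compact pieces of $\partial G$ into $U$, and, controlling the complementary small sets via local connectedness, one produces an embedding of $\partial G$ itself into $S^2$. Once $\partial G$ is planar it is a planar Peano continuum that is one-dimensional (hence of empty interior in $S^2$), has no local cut points, and is not $S^1$; Whyburn's characterization then identifies it with the Sierpinski carpet, which is case (2). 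This exhausts the trichotomy.

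The step I expect to be the main obstacle is the local-to-global planarity argument, that is, promoting ``some nonempty open subset of $\partial G$ is planar'' to ``$\partial G$ is planar.'' A finite union of planar open pieces need not be planar --- $K_{3,3}$ is such a union --- so no naive compactness argument suffices; one genuinely has to use the self-similar, expanding nature of a hyperbolic boundary, and making this precise (controlling how an embedding of a cofinite piece into $S^2$ interacts with the group action, and how local planarity propagates) is the technical heart of the Kapovich--Kleiner theorem and where I would expect the bulk of the work to lie. By comparison, the $S^1$-versus-triangle-group analysis in the first step is comparatively routine once \cite{tukia,gabai,cassonjungreis} and the classification of closed hyperbolic $2$-orbifolds are granted.
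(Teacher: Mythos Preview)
Your proposal is correct and follows exactly the approach the paper indicates: the paper does not give its own proof but states the theorem with a \qed and then remarks that it ``quickly follows from a compilation of results: the characterization of the Menger curve \cite{anderson58a,anderson58b}, the characterization of the Sierpinski carpet \cite{whyburn}, and the structure of local and global cut points in the boundary of a hyperbolic group \cite{bestvinamess,bowditch99,swarup}.'' Your outline assembles precisely these ingredients (with the minor caveat that local connectedness of $\partial G$ also uses Swarup's no-global-cut-point theorem, which the paper lists but you fold into the Bestvina--Mess citation), and you correctly identify the local-to-global planarity step as the technical core of the Kapovich--Kleiner argument.
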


  Theorem \ref{KKthm} quickly follows from a compilation of results: the characterization of the Menger curve \cite{anderson58a,anderson58b},  the characterization of the Sierpinski carpet \cite{whyburn}, and the structure of local and global cut points in the boundary of a hyperbolic group \cite{bestvinamess,bowditch99,swarup}.

  \begin{cor}\cite[Corollary 15]{kapovichkleiner}\label{KK_FbyZwMengerC_boundary}
    Let $F$ be a finitely generated free group, and $\phi \from F \to F$ an atoroidal automorphism. If no power of $\phi$ preserves a free splitting of $F$, then the Gromov boundary of $G_\phi := F \rtimes_\phi \Z$ is the Menger curve. In particular, if $\phi$ is fully irreducible, then $\partial G_{\phi}$ is the Menger curve. 
    \qed
  \end{cor}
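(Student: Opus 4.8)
The plan is to combine the Main Theorem with the Kapovich--Kleiner trichotomy recorded as Theorem~\ref{KKthm}, using the embedded $K_{3,3}$ to eliminate the two planar alternatives. First I would assemble the topological input. Since $\phi$ is atoroidal, Brinkmann's theorem \cite{brinkmann00} shows that $G_\phi$ is hyperbolic, and since $G_\phi = F \rtimes_\phi \Z$ has cohomological dimension two, the theorem of Bestvina--Mess \cite{bestvinamess} shows that $\partial G_\phi$ is $1$-dimensional.

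Next I would verify that $G_\phi$ splits over neither a finite nor a virtually cyclic subgroup, so that Theorem~\ref{KKthm} is applicable; I expect this to be the main obstacle. The group $G_\phi$ is one-ended: it is finitely generated and contains $F$ as an infinite, finitely generated normal subgroup with infinite quotient $\Z$. In particular $\partial G_\phi$ is connected and $G_\phi$ admits no nontrivial splitting over a finite subgroup. For splittings over a virtually infinite cyclic subgroup I would invoke the trichotomy recalled in the introduction: by Bowditch's theorem \cite[Theorem 6.2]{bowditch} the hyperbolic group $G_\phi$ splits over such a subgroup if and only if $\partial G_\phi$ contains a local cut point, and by Kapovich--Kleiner \cite[proof of Corollary 15]{kapovichkleiner} this happens if and only if some power of $\phi$ preserves a free splitting of $F$; by hypothesis no power of $\phi$ does, so $G_\phi$ has no such splitting. (This passage is the one part of the argument not internal to the present paper.)

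With these checks in place, Theorem~\ref{KKthm} leaves exactly three possibilities for $\partial G_\phi$: the Menger curve, the \scarpet carpet, or $\mathbb{S}^1$. The last two spaces are planar, whereas by the Main Theorem $\partial G_\phi$ contains an embedded copy of $K_{3,3}$ and hence is non-planar by Kuratowski's theorem \cite{kuratowski}. Therefore $\partial G_\phi$ is the Menger curve. It is precisely here that this argument diverges from \cite{kapovichkleiner}: the explicit non-planar set replaces the Euler-characteristic computation used there to rule out the carpet and the circle.

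Finally, for the ``in particular'' clause I would check that a fully irreducible $\phi$ automatically satisfies the hypothesis. If some power $\phi^k$ preserved a (necessarily nontrivial) free splitting of $F$, then $\phi^k$ would permute the finitely many conjugacy classes of the vertex groups of that splitting, so a further power of $\phi$ would fix each such conjugacy class; but every nontrivial free splitting of $F$ has a vertex group that is a proper free factor, since $\operatorname{rank}(F)\geq 2$, contradicting full irreducibility. Hence no power of $\phi$ preserves a free splitting, and the first part of the corollary gives that $\partial G_\phi$ is the Menger curve.
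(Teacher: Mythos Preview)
Your argument is correct and follows exactly the ``direct proof'' the paper sketches in the paragraph after the corollary: verify the hypotheses of Theorem~\ref{KKthm} (hyperbolicity, one-dimensionality, no splitting over a finite or virtually cyclic subgroup via Bowditch and \cite{kapovichkleiner}), then invoke the Main Theorem and non-planarity of $K_{3,3}$ to eliminate the circle and the \scarpet carpet. One small imprecision in your final paragraph: a nontrivial free splitting of $F$ can have all vertex stabilizers trivial (a free simplicial action), so instead of vertex groups argue that a further power of $\phi$ must fix an \emph{edge} orbit and hence preserve the one-edge collapse along it, whose vertex group is a nontrivial proper free factor.
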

  
  \begin{remark} 
    The hypothesis of the above corollary in \cite{kapovichkleiner} states that $\phi$ is irreducible, not fully irreducible. However, in the proof they obtain a contradiction by producing a $\phi^j$-invariant free decomposition of $F$ for some positive $j>0$. This property only contradicts that $\phi$ is fully irreducible.
  \end{remark}

  To prove the above corollary, Kapovich-Kleiner \cite{kapovichkleiner} first show that if $\phi \in \out$ is fully irreducible (or no power of $\phi$ preserves a free splitting of $F$), then $G_\phi$ does not split over a trivial or cyclic subgroup. (See also Brinkmann \cite{brinkmann02}.) 
  Then, Theorem~\ref{KKthm} implies $\partial G_\phi$ is either the circle, the Sierpinski carpet, or the Menger curve. At this point, using our Main Theorem, one can directly rule out the circle and Sierpinski carpet, since these spaces are planar. Alternatively, Kapovich--Kleiner rule out the circle by applying the work of Tukia--Gabai--Casson--Jungreis \cite{tukia,gabai,cassonjungreis}, which classifies the hyperbolic groups with boundary homeomorphic to $S^1$ as precisely the groups that act discretely and cocompactly by isometries on the hyperbolic plane. Finally, Kapovich--Kleiner prove a rather deep result: if $G$ is a hyperbolic group with Sierpinski carpet boundary, then $G$ together with the stabilizers of the peripheral circles of the \scarpet carpet forms a {\it Poincar\'{e} duality pair} (see \cite[Corollary 12]{kapovichkleiner}). In particular, they conclude that in this case $\chi(G)<0$. Since $\chi(G_\phi)=0$, the boundary $\partial G_\phi$ is the Menger curve.

\bibliographystyle{alpha}
\bibliography{Ref}

\end{document}